
\documentclass[12pt, reqno]{amsart}
\usepackage{amsmath, amsfonts, amsbsy, amsthm, amscd, graphicx}
\usepackage{amssymb, latexsym, color}
\usepackage{bm}
\usepackage[svgnames,psnames]{xcolor}
\usepackage[colorlinks, citecolor=Green,linkcolor=FireBrick,linktocpage,unicode]{hyperref}

\setlength{\textheight}{9in}
\addtolength{\textwidth}{.815in}
\addtolength{\oddsidemargin}{-.5in}
\addtolength{\topmargin}{-.5in}
\addtolength{\evensidemargin}{-.5in}
\addtolength{\footskip}{.5in}

\pagestyle{plain}

\theoremstyle{definition}
\numberwithin{equation}{section}
\newtheorem{theo}{Theorem}[section]
\newtheorem{defi}[theo]{Definition}

\newtheorem{rema}[theo]{Remark}
\newtheorem{corr}[theo]{Corollary}
\newtheorem{exam}[theo]{Example}
\newtheorem{prop}[theo]{Proposition}


\begin{document}

\title{Deformation of K\"{a}hler Metrics and an Eigenvalue Problem for the Laplacian on a Compact K\"{a}hler Manifold}
\author{Kazumasa Narita}
\thanks{Graduate School of Mathematics, Nagoya University, Furo-cho, 
Chikusa-ku, Nagoya 464-8602, Japan, m19032e@math.nagoya-u.ac.jp}
\date{}

\maketitle

\begin{abstract}
We study an eigenvalue problem for the Laplacian on a compact K\"{a}hler manifold. Considering the $k$-th eigenvalue $\lambda_{k}$ as a functional on the space of K\"{a}hler metrics with fixed volume on a compact complex manifold, we introduce the notion of $\lambda_{k}$-extremal K\"{a}hler metric. We deduce a condition for a K\"{a}hler metric to be $\lambda_{k}$-extremal. As examples, we consider product K\"{a}hler manifolds, compact isotropy irreducible homogeneous K\"{a}hler manifolds and flat complex tori.
\end{abstract}

\textbf{Keywords} Laplacian eigenvalue $\cdot$ K\"{a}hler manifold $\cdot$ Complex torus

\textbf{Mathematics Subject Classification} 53C55

\section{Introduction}
Let $M$ be a compact manifold of dimension $n$. Given a Riemannian metric $g$ on $M$, the volume $\mbox{Vol}(M,g)$ and the Laplace-Beltrami operator $\Delta_{g}$ are defined. Let $0 < \lambda_{1}(g) \leq \lambda_{2}(g) \leq \cdots \lambda_{k}(g) \leq \cdots$ be the eigenvalues of $\Delta_{g}$. The quantity $\lambda_{k}(g) \mbox{Vol}(M,g)^{2/n}$ is invariant under scaling of the metric $g$. Hersch\cite{Hersch} proved that on a $2$-dimensional sphere $S^{2}$, the scale-invariant quantity $\lambda_{1}(g)\mbox{Area}(g)$ is maximized when $g$ is a round metric. Inspired by the work, Berger\cite{Berger} asked whether 
\begin{equation*}
\Lambda_{1}(M) := \sup_{g} \lambda_{1}(g) \mbox{Vol}(M,g)^{2/n}
\end{equation*}
is finite for a compact manifold $M$ of dimension $n$. For a surface $M$, $\Lambda_{1}(M)$ is bounded by a constant depending on the genus \cite{YY, Karpukhin}. Berger \cite{Berger} also conjectured that for a 2-dimensional torus $T^{2}$, the flat equilateral torus attains $\Lambda_{1}(T^{2})$. Nadirashvili \cite{Nadirashvili} settled Berger's conjecture affirmatively. In the same paper, he proved a theorem that a metric $g$ on a given surface $M$ is extremal for the functional $\lambda_{1}: g \mapsto \lambda_{1}(g)$ with respect to all the volume-preserving deformations of the metric if and only if there exists a finite collection of $\lambda_{1}(g)$-eigenfunctions $\{f_{j}\}_{j=1}^{N}$ such that $F:= (f_{1}, \cdots, f_{N}): (M,g) \rightarrow \mathbf{R}^{N}$ is an isometric minimal immersion into a sphere in $\mathbf{R}^{N}$. Later, El Soufi-Ilias \cite{EI2} simplified the proof of the theorem and generalized it to a compact manifold $M$ of any dimension. 
More explicitly, they proved the following:
 \begin{theo}[\cite{Nadirashvili}, \cite{EI2}]
 \label{Nad-intro}
Let $(M, g)$ be a compact $m$-dimensional Riemannian manifold. The metric $g$ is extremal for the functional $\lambda_{1}$ with respect to all the volume-preserving deformations of the metric if and only if there exists a finite collection of $\lambda_{1}(g)$-eigenfunctions  $\{f_{1}, \ldots, f_{N} \}$ such that $F:= (f_{1}, \cdots, f_{N}): (M,g) \rightarrow \mathbf{R}^{N}$ is an isometric minimal immersion into $S^{N-1}(\sqrt{m/\lambda_{1}(g)}) \subset \mathbf{R}^{N}$. 
\end{theo}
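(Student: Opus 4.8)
The plan is to follow the variational strategy of El Soufi--Ilias: reduce the (non-smooth, one-sided) extremality of $\lambda_{1}$ to a statement about a compact convex set of symmetric $2$-tensors attached to the first eigenspace, and then produce the eigenfunctions by a finite-dimensional separation argument. \textbf{First variation.} Fix a volume-preserving deformation $(g_{t})$ with $\dot g_{0}=h$; the volume constraint linearizes to $\int_{M}\operatorname{tr}_{g}h\,dv_{g}=0$. For $f\in E_{1}:=\ker(\Delta_{g}-\lambda_{1}(g))$ normalized by $\int_{M}f^{2}\,dv_{g}=1$, a Hadamard-type computation of the Rayleigh quotient $R_{g_{t}}(f)$ (using $\partial_{t}g_{t}^{ij}|_{0}=-h^{ij}$ and $\partial_{t}\,dv_{g_{t}}|_{0}=\tfrac12\operatorname{tr}_{g}(h)\,dv_{g}$) gives
\[
\frac{d}{dt}\Big|_{t=0}R_{g_{t}}(f)\;=\;-\int_{M}\big\langle h,\,q_{f}\big\rangle_{g}\,dv_{g},\qquad q_{f}:=df\otimes df-\tfrac12\big(|\nabla f|_{g}^{2}-\lambda_{1}(g)\,f^{2}\big)\,g,
\]
the "stress--energy" tensor of $f$. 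Since $\Delta_{g_{t}}$ acts on the varying space $L^{2}(M,dv_{g_{t}})$, I would first conjugate by the unitary $\phi\mapsto(dv_{g_{t}}/dv_{g})^{1/2}\phi$ onto the fixed space $L^{2}(M,dv_{g})$, and then, since only the first-order behavior matters, test the real-analytic rays $g_{t}=g+th$ and apply Kato--Rellich perturbation theory: the one-sided derivatives of $t\mapsto\lambda_{1}(g+th)$ at $0$ are the smallest and the largest eigenvalues of the quadratic form $f\mapsto-\int_{M}\langle h,q_{f}\rangle_{g}\,dv_{g}$ on the finite-dimensional space $E_{1}$. Applying this with $\pm h$, one sees that $g$ is $\lambda_{1}$-extremal if and only if
\[
\max_{f\in E_{1},\ \|f\|_{L^{2}(g)}=1}\ \int_{M}\big\langle h,\,q_{f}\big\rangle_{g}\,dv_{g}\ \geq\ 0\qquad\text{for every symmetric $2$-tensor $h$ with }\int_{M}\operatorname{tr}_{g}h\,dv_{g}=0.
\]

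\textbf{The separation step.} Because $f\mapsto q_{f}$ is quadratic and $E_{1}$ is finite-dimensional, $W:=\operatorname{span}\{q_{f}:f\in E_{1}\}$ is a finite-dimensional space of symmetric $2$-tensors, and $K:=\operatorname{conv}\{q_{f}:f\in E_{1},\ \|f\|_{L^{2}(g)}=1\}$ is compact and convex in $W$. The linear functionals on $W$ realized by admissible directions $h$ are, according to whether $g\in W$, either all of $W^{*}$ or the hyperplane $\{\psi\in W^{*}:\psi(g)=0\}$. In the former case extremality would force $0\in K$, hence a nontrivial identity $\sum_{i}q_{f_{i}}=0$ after absorbing convex weights into the $f_{i}$; but the next step shows this entails $\sum_{i}f_{i}^{2}\equiv 0$, a contradiction, so necessarily $g\in W$. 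In that case the separation theorem, applied in $W/\mathbf{R}g$, shows that extremality is equivalent to the existence of $f_{1},\dots,f_{N}\in E_{1}$ and a constant $\beta$ with $\sum_{i=1}^{N}q_{f_{i}}=\beta\,g$.

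\textbf{From the tensor identity to a minimal immersion, and back.} Given $\sum_{i}q_{f_{i}}=\beta g$, set $S=\sum_{i}f_{i}^{2}$ and $P=\sum_{i}|\nabla f_{i}|_{g}^{2}$. The eigenvalue equation gives $\tfrac12\Delta_{g}S=\lambda_{1}S-P$, while the trace of the tensor identity gives $P-\tfrac{m}{2}(P-\lambda_{1}S)=\beta m$; eliminating $P$ yields, for $m>2$, the scalar equation $\Delta_{g}S+\tfrac{4\lambda_{1}}{m-2}\,S=\tfrac{4\beta m}{m-2}$ (and for $m=2$ simply $\lambda_{1}S=2\beta$). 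Since $\Delta_{g}$ plus a positive constant is invertible, $S$ is the constant $\beta m/\lambda_{1}$; then $P=\lambda_{1}S$ is constant, the trace-free part of the tensor identity forces $\sum_{i}df_{i}\otimes df_{i}=\beta g$, and $\beta>0$ unless all $f_{i}$ vanish. Rescaling $f_{i}\mapsto\beta^{-1/2}f_{i}$ gives $\sum_{i}df_{i}\otimes df_{i}=g$ and $\sum_{i}f_{i}^{2}=m/\lambda_{1}(g)$, so $F=(f_{1},\dots,f_{N})$ is an isometric immersion with image in $S^{N-1}(\sqrt{m/\lambda_{1}(g)})$; since componentwise $\Delta_{g}F=\lambda_{1}(g)F$, Takahashi's theorem shows $F$ is minimal there. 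Conversely, if such an $F$ exists, then $\sum_{i}df_{i}\otimes df_{i}=g$ and $\sum_{i}f_{i}^{2}=m/\lambda_{1}(g)$ give $\sum_{i}q_{f_{i}}=g$; pairing with any admissible $h$ yields $\sum_{i}\int_{M}\langle h,q_{f_{i}}\rangle_{g}\,dv_{g}=\int_{M}\operatorname{tr}_{g}h\,dv_{g}=0$, so some summand is $\geq 0$, hence $\max_{f\in E_{1},\|f\|=1}\int_{M}\langle h,q_{f}\rangle_{g}\,dv_{g}\geq 0$ and $g$ is extremal by the first step.

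\textbf{Main obstacle.} The two computational pieces (the Hadamard formula and the scalar PDE above) are routine; the delicate points are, first, justifying that the one-sided derivatives of the merely Lipschitz function $t\mapsto\lambda_{1}(g+th)$ --- whose branches may collide because $\lambda_{1}(g)$ can have multiplicity, and whose defining operators live on different $L^{2}$ spaces --- are exactly the extreme eigenvalues of the first-variation form on $E_{1}$; and second, identifying precisely which linear functionals on the finite-dimensional space $W$ are induced by volume-preserving deformation directions, which is what makes the separation argument legitimate. Everything else is bookkeeping around these two structural facts.
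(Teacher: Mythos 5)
This theorem is quoted from \cite{Nadirashvili} and \cite{EI2} and the paper gives no proof of it, so the only available comparison is with the paper's Kähler analogue (Theorem \ref{harmonic}, whose proof is explicitly modeled on Lemma 3.1 of \cite{EI}); your proposal correctly reconstructs exactly that El Soufi--Ilias strategy --- the first-variation formula $\frac{d}{dt}\big|_{t=0}R_{g_t}(f)=-\int_M\langle h,q_f\rangle\,dv_g$, the reduction via Theorem \ref{EI} to indefiniteness of the associated quadratic form on $E_1(g)$, the convex-hull separation producing $\sum_i q_{f_i}=\beta g$, and the elimination of $P$ to force $\sum_i f_i^2$ constant followed by Takahashi's theorem. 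The computations check out (including the dichotomy on whether $g$ lies in the span of the $q_f$ and the degenerate case $\beta=0$), so I see no gap.
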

In particular, they showed that the canonical metric on a compact isotropy irreducible homogeneous manifold is extremal for the functional $\lambda_{1}$. For recent remarkable progress in study of $\Lambda_{1}(M)$ for a surface $M$, see \cite{NS}, \cite{Karpukhin2}, \cite{Petrides}, \cite{Ros} and \cite{KV}, for example.

On the other hand, on any manifold $M$ with $n \geq 3$, one can construct a $1$-parameter family $(g_{t})_{t>0}$ such that the quantity $\lambda_{1}(g_{t}) \mbox{Vol}(M,g_{t})^{2/n}$ diverges to infinity as $t$ goes to infinity \cite{CD}. (See also \cite{Bleecker}, \cite{MU}, \cite{Muto}, \cite{Tanno} and \cite{Urakawa}.) However, for a given Riemannian metric $g$ on $M$, the restriction of the functional $\lambda_{1}$ to metrics in the conformal class with fixed volume is bounded \cite{LY, EIconf}. El Soufi-Ilias \cite{EI1} proved that a metric $g$ is extremal for the functional $\lambda_{1}$ among such metrics if and only if there exists a finite collection of eigenfunctions $\{f_{j}\}_{j=1}^{N}$ such that $F:= (f_{1}, \cdots, f_{N}): (M,g) \rightarrow \mathbf{R}^{N}$ is a harmonic map into a unit sphere in $\mathbf{R}^{N}$ with constant energy density $|dF|^{2} =\lambda_{1}(g)$.

Bourguignon--Li--Yau \cite{BLY} proved the following result:
\begin{theo}[\cite{BLY}]
\label{BLY-theo}
Let $(M,J)$ be a compact complex $n$-dimensional manifold admitting a full holomorphic immersion $\Phi: (M, J) \rightarrow \mathbf{C}P^{N}$. Let $\sigma_{FS}$ be the Fubini-Study form on $\mathbf{C}P^{N}$ with constant holomorphic sectional curvature $1$. Then, for any K\"{a}hler form $\omega$ on $(M, J)$, the first eigenvalue $\lambda_{1}(\omega)$ satisfies
 \begin{equation*}
 \lambda_{1}(\omega) \leq n\frac{N+1}{N} \frac{\int_{M}\Phi^{*}\sigma_{FS} \wedge \omega^{n-1}}{\int_{M} \omega^{n}}.
 \end{equation*}
 \end{theo}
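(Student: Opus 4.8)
The plan is to use the variational (min-max) characterization of $\lambda_1(\omega)$ together with a clever choice of test functions coming from the projective coordinates of $\Phi$, in the spirit of Hersch's original argument for $S^2$. Recall that for a K\"ahler form $\omega$ on $(M,J)$, the Laplacian is $\Delta_\omega$ and
\begin{equation*}
\lambda_1(\omega) = \inf\left\{ \frac{\int_M |du|^2\, \omega^n}{\int_M u^2\, \omega^n} \ : \ u \in C^\infty(M),\ \int_M u\, \omega^n = 0 \right\}.
\end{equation*}
Thus if I can produce, for each index $\alpha$, a function $u_\alpha$ with $\int_M u_\alpha\,\omega^n = 0$, then summing the Rayleigh quotient inequalities $\lambda_1(\omega)\int_M u_\alpha^2\,\omega^n \le \int_M |du_\alpha|^2\,\omega^n$ over $\alpha$ will give a bound of the form $\lambda_1(\omega)\int_M (\sum_\alpha u_\alpha^2)\,\omega^n \le \int_M (\sum_\alpha |du_\alpha|^2)\,\omega^n$, and the goal is to arrange both the sum of squares on the left and the sum of energy densities on the right to have recognizable geometric meaning.

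The natural candidates are the components of the moment-type map associated to $\Phi$. Writing $\Phi = [\Phi_0:\cdots:\Phi_N]$ and normalizing $\sum_\beta |\Phi_\beta|^2 = 1$ locally, the Hermitian matrix-valued function $P = (\Phi_\alpha \overline{\Phi_\beta})$ has entries that, after subtracting the appropriate constants, serve as test functions; equivalently one looks at the pullback under $\Phi$ of the standard coordinate functions on $\mathbf{C}P^N \hookrightarrow \mathfrak{u}(N+1)^*$. The key point is Hersch's lemma: by composing $\Phi$ with a suitably chosen element of $PU(N+1)$ (which acts on $\mathbf{C}P^N$ by isometries and moves the ``center of mass'' of the pushforward measure $\Phi_*(\omega^n)$ to the barycenter of the symmetric space), one can assume $\int_M P\, \omega^n = \frac{1}{N+1}\,\mathrm{Id}\cdot\int_M\omega^n$, so that each diagonal-minus-constant and each off-diagonal entry of $P$ has zero mean against $\omega^n$ and is therefore admissible in the Rayleigh quotient. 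Then $\sum_{\alpha,\beta}|P_{\alpha\beta}|^2 = (\sum|\Phi_\alpha|^2)^2 = 1$, while $\sum_{\alpha,\beta}|d P_{\alpha\beta}|^2$ works out, after a computation with the Fubini-Study metric, to $2\,\langle \Phi^*\sigma_{FS}, \omega\rangle_\omega$ up to the right constant — here one uses that for the normalization with holomorphic sectional curvature $1$, the Fubini-Study metric is $g_{FS} = 2\,\mathrm{Ric}$-type normalized so that the relevant trace identity produces exactly this. Plugging into the summed inequality and using $\langle \Phi^*\sigma_{FS},\omega\rangle_\omega\,\omega^n = n\,\Phi^*\sigma_{FS}\wedge\omega^{n-1}$ gives
\begin{equation*}
\lambda_1(\omega)\int_M \omega^n \le C\, n \int_M \Phi^*\sigma_{FS}\wedge\omega^{n-1},
\end{equation*}
and tracking the constant yields $C = \frac{N+1}{N}$.

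Concretely, the steps in order are: (i) recall the min-max formula for $\lambda_1$; (ii) set up the normalized lift of $\Phi$ and the matrix $P$; (iii) apply Hersch's center-of-mass lemma to kill the means $\int_M P_{\alpha\beta}\,\omega^n$ for the off-diagonal entries and to make the diagonal entries equal to $\frac{1}{N+1}$ in average, so that $P_{\alpha\alpha} - \frac{1}{N+1}$ is admissible; (iv) sum the Rayleigh inequalities over all $\alpha,\beta$, using that the algebraic identity $\sum|P_{\alpha\beta}|^2 = 1$ kills the left side down to $\int_M\omega^n$ minus the contribution of the subtracted constants, which is where the factor $\frac{N+1}{N}$ (rather than $1$) enters — because $\sum_\alpha (P_{\alpha\alpha}-\frac{1}{N+1})^2 + \sum_{\alpha\ne\beta}|P_{\alpha\beta}|^2 = 1 - \frac{1}{N+1} = \frac{N}{N+1}$ pointwise, by Cauchy-Schwarz or direct expansion; (v) identify $\sum_{\alpha,\beta}|dP_{\alpha\beta}|^2$ with $2\langle\Phi^*\sigma_{FS},\omega\rangle$ and convert to the wedge-product form via the pointwise identity $\langle\alpha,\omega\rangle\,\omega^n = n\,\alpha\wedge\omega^{n-1}$ for a real $(1,1)$-form $\alpha$. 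The main obstacle I anticipate is step (v): verifying that the Euclidean energy density $\sum_{\alpha,\beta}|\nabla P_{\alpha\beta}|^2$ of the moment map, computed with respect to $\omega$, equals precisely $2$ times the Fubini-Study form evaluated against $\omega$ — this requires the correct normalization of $\sigma_{FS}$ (holomorphic sectional curvature $1$) and a careful local computation using homogeneous coordinates, e.g. working at a point where $\Phi = [1:0:\cdots:0]$ and $d\Phi_0 = 0$, so that $\sum|dP_{\alpha\beta}|^2 = 2\sum_{\alpha\ge 1}|d\Phi_\alpha|^2 = 2\,\Phi^*g_{FS}$ traced against $\omega$. Getting the constant $2$ (equivalently, confirming it does not spoil the final answer) and making sure the normalization of $\sigma_{FS}$ is consistent with the stated curvature convention is the delicate accounting that the rest of the argument hinges on; everything else is Hersch's method applied in the holomorphic setting.
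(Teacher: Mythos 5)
The paper does not prove this statement; it is quoted verbatim from Bourguignon--Li--Yau \cite{BLY}, so there is no internal proof to compare against. Measured against the actual argument of \cite{BLY}, your skeleton is the right one (Rayleigh quotients for the entries of the rank-one projector $P=\Phi\Phi^{*}/|\Phi|^{2}$, the pointwise identity $\sum_{\alpha,\beta}|P_{\alpha\beta}|^{2}=\operatorname{tr}(P^{2})=1$ producing the factor $\tfrac{N}{N+1}$, and the identification of $\sum|dP_{\alpha\beta}|^{2}$ with the trace of $\Phi^{*}\sigma_{FS}$ against $\omega$), but step (iii) as you state it is wrong and would make the proof fail.

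The balancing cannot be achieved with an element of $PU(N+1)$. Under a unitary $U$ the center of mass $C=\frac{1}{V}\int_{M}P\,\omega^{n}$ transforms by conjugation, $C\mapsto UCU^{*}$, which preserves its eigenvalues; since $\operatorname{tr}C=1$ but the eigenvalues of $C$ are in general not all equal to $\tfrac{1}{N+1}$, no isometry of $\mathbf{C}P^{N}$ can move $C$ to $\tfrac{1}{N+1}\mathrm{Id}$. Hersch's lemma in this setting requires the full group $PGL(N+1,\mathbf{C})$ of holomorphic automorphisms, acting nonlinearly on $P$ via $\Phi\mapsto A\Phi$, together with a Brouwer-degree argument on (a compactification of) $SL(N+1,\mathbf{C})/SU(N+1)$; this is exactly where the hypothesis that $\Phi$ is \emph{full} enters, since the degree argument breaks down if the pushforward measure concentrates on a hyperplane. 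Your proposal never uses fullness, which is a symptom of the gap. One must then also check that replacing $\Phi$ by $A\circ\Phi$ does not change the right-hand side of the inequality: $A$ is not an isometry, so $(A\Phi)^{*}\sigma_{FS}\neq\Phi^{*}\sigma_{FS}$ pointwise, but the two forms are cohomologous and $\omega^{n-1}$ is closed, so $\int_{M}(A\Phi)^{*}\sigma_{FS}\wedge\omega^{n-1}=\int_{M}\Phi^{*}\sigma_{FS}\wedge\omega^{n-1}$. With step (iii) repaired in this way and the normalization in step (v) checked (your local computation at $\Phi=[1:0:\cdots:0]$ is the right way to do it), the rest of your outline does reproduce the Bourguignon--Li--Yau proof.
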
 
The divergence theorem implies that $\lambda_{1}(\omega)$ is bounded by a constant depending on only $n$, $N$, $\Phi$ and the K\"{a}hler class $[\omega]$. The above theorem implies that the Fubini-Study metric on $\mathbf{C}P^{N}$ is a $\lambda_{1}$-maximizer in its K\"{a}hler class. Biliotti-Ghigi \cite{BG} generalized the result and showed that the canonical K\"{a}hler-Einstein metric on a Hermitian symmetric space of compact type is a $\lambda_{1}$-maximizer in its K\"{a}hler class. (See also \cite{AGL}.) Motivated by these results, Apostolov--Jakobson--Kokarev \cite{AJK} proved that the metric $g$ on a compact K\"{a}hler manifold is extremal for the functional $\lambda_{1}$ within its K\"{a}hler class if and only if there exists a finite collection of eigenfunctions $\{f_{j}\}_{j=1}^{N}$ such that the equation
\begin{equation}
\label{AJKintro}
\lambda_{1}(g)^{2}\left( \sum_{j=1}^{N} f_{j}^{2} \right) -2\lambda_{1}(g) \left(\sum_{j=1}^{N} |\nabla f_{j}|^{2} \right) + \sum_{j=1}^{N} |dd^{c}f_{j}|^{2} = 0
\end{equation}
holds. Using this equation, they showed that the metric $g$ of a compact homogeneous K\"{a}hler-Einstein manifold $(M, J, g, \omega)$ of positive scalar curvature is extremal for the functional $\lambda_{1}$ within its K\"{a}hler class. However, compared to the aforementioned theorems due to Nadirashvili and El Soufi-Ilias, the geometric meaning of the equation $(\ref{AJKintro})$ is not clear.

Let $(M,J)$ be a compact complex manifold satisfying the assumption of Theorem \ref{BLY-theo}. Let $H^{1,1}(M,J;\mathbf{R}) := H^{1,1}(M,J) \cap H^{2}_{dR}(M)$. Then the map 
\begin{equation*}
H^{1,1}(M,J;\mathbf{R}) \rightarrow \mathbf{R}, \quad [\omega] \mapsto \int_{M} \Phi^{*}\sigma_{FS} \wedge \omega^{n-1}
\end{equation*}
is a well-defined continuous function. Thus this is bounded on the compact subset $\{ [\omega] \in H^{1,1}(M,J;\mathbf{R})  \mid \int_{M} \omega^{n}=1 \}$. In other words, the functional $\lambda_{1}$ is bounded on the set of K\"{a}hler metrics with fixed volume on $(M, J)$. However, the property of the $\lambda_{1}$-maximizing K\"{a}hler metrics has not been studied.   

In this paper, on a compact complex manifold $(M, J)$, we introduce the notion of $\lambda_{k}$-extremal K\"{a}hler metric by considering all volume-preserving deformations of the K\"{a}hler metric. Be cautioned that we fix the complex structure $J$ and consider only $J$-compatible K\"{a}hler metrics. (See Definition \ref{definition} for the precise definition of the $\lambda_{k}$-extremality.) The notion of $\lambda_{k}$-extremality in this paper is stronger than that in the above theorem due to Apostolov--Jakobson--Kokarev.  We prove that the K\"{a}hler metric $g$ is $\lambda_{1}$-extremal if and only if there exists a finite collection of eigenfunctions $\{f_{j}\}_{j=1}^{N}$ such that the equations
\begin{equation}
\label{intro}
\left\{ \,
\begin{aligned}
& H \left( \sum_{j=1}^{N} f_{j}dd^{c}f_{j} \right) = -\omega , \\
& \lambda_{1}(g)^{2}\left( \sum_{j=1}^{N} f_{j}^{2} \right) -2\lambda_{1}(g) \left(\sum_{j=1}^{N} |\nabla f_{j}|^{2} \right) + \sum_{j=1}^{N} |dd^{c}f_{j}|^{2} = 0  \\
\end{aligned}
\right.
\end{equation}
hold (Theorem \ref{conclusion}). Here $H$ is the harmonic projector, which is defined due to the Hodge decomposition on a compact K\"{a}hler manifold. It is obvious that the equation $(\ref{intro})$ implies $(\ref{AJKintro})$. In addition, we can obtain a result that is similar to the aforementioned results due to El Soufi-Ilias \cite{EI2} and Apostolov et al \cite{AJK}. That is, the metric on a compact isotropy irreducible homogeneous K\"{a}hler manifold is $\lambda_{1}$-extremal in our sense (Proposition \ref{irreducible}). We also give an example of a K\"{a}hler metric that is $\lambda_{1}$-extremal within its K\"{a}hler class, but not so for all volume-preserving deformations of the K\"{a}hler metric (Example \ref{illustrate}). 

In the final section of this paper, we consider flat complex tori. We prove that the metric on a flat complex torus is $\lambda_{1}$-extremal within its K\"{a}hler class. Montiel--Ros \cite{MR} showed that among all the real $2$-dimensional tori, the only square torus $\mathbf{R}^{2}/\mathbf{Z}^{2}$ admits an isometric minimal immersion into a $3$-dimensional Euclidean sphere by the first eigenfunctions. Later, using Theorem \ref{Nad-intro}, El Soufi--Ilias \cite{EI2} improved the result. That is, they proved that a real $2$-dimensional torus admits isometric minimal immersion into a Euclidean sphere of some dimension by the first eigenfunctions if and only if the torus is the square torus or the equilateral torus. Very recently, L\"{u}--Wang--Xie \cite{LWX} classified all the $3$-dimensional flat tori and $4$-dimensional flat tori that admit an isometric minimal immersion into a Euclidean sphere of some dimension by the first eigenfunctions. Furthermore, they constructed new examples of flat tori that admit an isometric minimal immersion into a Euclidean sphere by the first eigenfunctions. In Theorem \ref{characterization}, we give a necessary and sufficient condition for the metric on a flat complex torus to be $\lambda_{1}$-extremal in our sense. The notion of $\lambda_{1}$-extremality introduced in this paper is stronger than that in Theorem \ref{Nad-intro}. Hence Theorem \ref{characterization} gives a necessary condition for a flat complex torus to admit an isometric minimal immersion into a Euclidean sphere by the first eigenfunctions. As far as the author knows, this is the only currently known necessary condition for a flat complex torus to admit an isometric minimal immersion into a Euclidean sphere by the first eigenfunctions. We also show that if the multiplicity of the first eigenvalue of a flat complex torus is $2$, then the flat metric on the torus is not $\lambda_{1}$-extremal in our sense (Corollary \ref{zeros}). In addition, we show that an $m$-dimensional flat torus $\mathbf{R}^{m}/D_{m}$, where $D_{m}$ is an $m$-dimensional checkerboard lattice, admits an isometric minimal immersion into a Euclidean sphere by the first eigenfunctions (Proposition \ref{check}). This fact has been already proved by L\"{u}--Wang--Xie \cite{LWX} for $m=3,4$.

\section{A $\lambda_{k}$-Extremal K\"{a}hler Metric}
 Let $(M, J, g)$ be a compact K\"{a}hler manifold (without boundary) of complex dimension $n$. By scaling the metric, we may assume that the volume Vol($M$, $g$) is $1$. Let $\omega$ be the K\"{a}hler form and $d\mu$ the volume form. It is well known that $d\mu$ is given by $d\mu = \omega^{n}/n!$. Below we 
 follow the conventions in \cite{AJK}. We define the exterior differential $d$ by $d = \partial + \overline{\partial}$ and twisted exterior differential $d^{c}$ by $d^{c} = i(\overline{\partial}-\partial)$. Both $d$ and $d^{c}$ are real operators. We clearly have $dd^{c} = 2i\partial \overline{\partial}$. The K\"{a}hler metric $g$ induces a pointwise hermitian inner product on $(1,1)$-forms, which we also denote by $g$. Note that $g$ is symmetric for a pair of real $(1,1)$-forms. We have
 \begin{equation}
 \label{omega-norm}
 |\omega|^{2} = g(\omega, \omega) = n.
 \end{equation}
 Let $\delta$ and $\delta^{c}$ be the $L^{2}(g)$-adjoints of $d$ and $d^{c}$ respectively. We define the Laplacian $\Delta_{g}$ by $\Delta_{g} =d\delta + \delta d$. Eigenvalues of the Laplacian $\Delta_{g}$ acting on functions are nonnegative and we denote them by $0 < \lambda_{1}(g) \leq \lambda_{2}(g) \leq \cdots \lambda_{k}(g) \leq \cdots$. For any $k \in \mathbf{N}$, let $E_{k}(g)$ be the vector space of real-valued eigenfunctions of $\Delta_{g}$ corresponding to $\lambda_{k}(g)$. That is, $E_{k}(g)$ is given by $E_{k}(g) = \mbox{Ker}(\Delta_{g}-\lambda_{k}(g)I)$, where $I$ is the identity map acting on functions. We have 
 \begin{equation}
 \label{Laplacian}
 \Delta_{g} \phi = -2 g^{j\overline{k}} \frac{\partial^{2}\phi}{\partial z^{j} \partial{\overline{z}^{k}}} = -2g(\omega, i\partial \overline{\partial}\phi) = -g(\omega, dd^{c}\phi)
 \end{equation}
 for a function $\phi$. We also have 
 \begin{equation*}
 *\omega = \frac{1}{(n-1)!}\omega^{n-1},
 \end {equation*}
 where $*$ is the Hodge star operator. This implies that the equation
 \begin{equation}
 \label{star}
 \alpha \wedge \omega^{n-1} = (n-1)!\alpha \wedge * \omega = (n-1)! g(\alpha, \omega)\frac{\omega^{n}}{n!} = \frac{1}{n} g(\alpha, \omega) \omega^{n}
 \end{equation}
 holds for a $(1,1)$-form $\alpha$. In particular, the equations (\ref{Laplacian}) and (\ref{star}) imply 
 \begin{equation}
 \label{Laplacian-star}
 ndd^{c}\phi \wedge \omega^{n-1} = g(dd^{c}\phi, \omega)\omega^{n} = -(\Delta_{g} \phi) \omega^{n}.
 \end{equation}
 
 Let $Z^{1,1}(M;\mathbf{R})$ be the real vector space of $d$-closed real $(1,1)$-forms on $M$. Let $Z^{1,1}_{0}(M;\mathbf{R})$ be the subspace defined by
 \begin{equation*}
Z^{1,1}_{0}(M;\mathbf{R}) := \left\{ \alpha \in Z^{1,1}(M;\mathbf{R}) \mid \int_{M} g(\alpha, \omega) d\mu = 0 \right\}.
\end{equation*}
Fix an arbitrary element $\alpha \in Z^{1,1}_{0}(M;\mathbf{R})$. The $(1,1)$-form
\begin{equation}
\label{deformation}
\widetilde{\omega}_{t} := \omega + t\alpha 
\end{equation}
is a K\"{a}hler form for a sufficiently small $t$. In particular, if we consider $\alpha = dd^{c}\psi$ for a real-valued function $\psi$, then $\alpha$ satisfies 
\begin{equation*}
\int_{M} g(\alpha, \omega) d\mu = -\int_{M} \Delta_{g} \psi d\mu = 0,
\end{equation*}
and so we have $\alpha \in Z^{1,1}_{0}(M;\mathbf{R})$. The $1$-parameter family $\widetilde{\omega}_{t} = \omega + tdd^{c}\psi$ is a deformation of $\omega$ in its K\"{a}hler class $[\omega]$, which was studied by Apostolov--Jakobson--Kokarev \cite{AJK}. Let $\widetilde{g}_{t}$ be the K\"{a}hler metric corresponding to the K\"{a}hler form $\widetilde{\omega}_{t}$ in (\ref{deformation}). Set
\begin{equation}
\label{metrics}
g_{t} := \mbox{Vol}(M, \tilde{g}_{t})^{-1/n}\widetilde{g_{t}}, \quad \omega_{t} := \mbox{Vol}(M, \tilde{g}_{t})^{-1/n}\widetilde{\omega_{t}}.
\end{equation}
Then we have $g_{0} = g$ and $\omega_{0} = \omega$. We also see that  $(g_{t})_{t}$ is a volume-preserving $1$-parameter family of K\"{a}hler metrics that depends analytically on $t$, and  $\omega_{t}$ is the K\"{a}hler form associated with $g_{t}$.  Moreover, we can verify that $\left. \frac{d}{dt} \right|_{t=0} \omega_{t} = \alpha$. (See (\ref{omega}) below.)
\begin{defi}
\label{definition}
The K\"{a}hler metric $g$ on a compact K\"{a}hler manifold $(M, J, g, \omega)$ is called \textit{$\lambda_{k}$-extremal (for all the volume-preserving deformations of the K\"{a}hler metric)} if the inequality
\begin{equation*}
\left( \left.\frac{d}{dt}\right|_{t=0^{-}} \lambda_{k}(g_{t}) \right)\cdot \left( \left.\frac{d}{dt}\right|_{t=0^{+}} \lambda_{k}(g_{t}) \right) \leq 0
\end{equation*}
holds for any 1-parameter family of volume-preserving K\"{a}hler metrics $(g_{t})_{t}$ that depends real analytically on $t$.
\end{defi}

\begin{rema}
When we consider whether a K\"{a}hler metric $g$ on $(M,J)$ is $\lambda_{k}$-extremal, we may rescale the metric so that $\mbox{Vol}(M,g) =1$. Let $(g_{t})_{t}$ be a 1-parameter family of volume-preserving K\"{a}hler metrics that depends real analytically on $t$. Let $\omega_{t}$ be the K\"{a}hler form associated with $g_{t}$. El Soufi-Ilias \cite{EI} showed that $\left.\frac{d}{dt}\right|_{t=0^{-}} \lambda_{k}(g_{t}) $ and $\left.\frac{d}{dt}\right|_{t=0^{+}} \lambda_{k}(g_{t})$ depend on only $\omega$ and $\left.\frac{d}{dt}\right|_{t=0}\omega_{t}$. Since $(\omega_{t})_{t}$ is volume-preserving, we have $\left.\frac{d}{dt}\right|_{t=0}\omega_{t} \in Z^{1,1}_{0}(M; \mathbf{R})$. Hence it suffices to consider $(\omega_{t})_{t}$ given by $(\ref{metrics})$. Thus a K\"{a}hler metric $g$ on $(M,J)$ with $\mbox{Vol}(M,g) =1$ is $\lambda_{k}$-extremal if and only if for any $\alpha \in  Z^{1,1}_{0}(M;\mathbf{R})$, the associated volume-preserving $1$-parameter family of K\"{a}hler metrics $(g_{t})_{t}$ defined by (\ref{metrics}) satisfies
\begin{equation*}
\left( \left.\frac{d}{dt}\right|_{t=0^{-}} \lambda_{k}(g_{t}) \right)\cdot \left( \left.\frac{d}{dt}\right|_{t=0^{+}} \lambda_{k}(g_{t}) \right) \leq 0.
\end{equation*}
\end{rema}

We quote the following theorem due to El Soufi-Ilias \cite{EI}:
\begin{theo}[\cite{EI}]
\label{EI}
Let $(M,g)$ be a compact Riemannian manifold and $(g_{t})_{t}$ be a $1$-parameter family of Riemannian metrics that depends real-analytically on $t$ with $g_{0}=g$. Let $\Pi_{k}: L^{2}(M,g) \rightarrow E_{k}(g)$ be the orthogonal projection onto $E_{k}(g)$. Define the operator $P_{k}: E_{k}(g) \rightarrow E_{k}(g)$ by 
\begin{equation}
\label{Pk}
P_{k}(f) := \Pi_{k} \left( \left.\frac{d}{dt}\right|_{t=0} \Delta_{g_{t}}f \right).
\end{equation}
Then the following hold:
\begin{enumerate}
\item $\left. \frac{d}{dt}\right|_{t=0^{-}} \lambda_{k}(g_{t})$ and $\left.\frac{d}{dt}\right|_{t=0^{+}} \lambda_{k}(g_{t})$ are eigenvalues of $P_{k}$. \\
\item If $\lambda_{k}(g) > \lambda_{k-1}(g)$, then $\left. \frac{d}{dt}\right|_{t=0^{-}} \lambda_{k}(g_{t})$ and $\left.\frac{d}{dt}\right|_{t=0^{+}} \lambda_{k}(g_{t})$ are the greatest and the least eigenvalues of $P_{k}$. \\
\item If $\lambda_{k}(g) < \lambda_{k+1}(g)$, then $\left. \frac{d}{dt}\right|_{t=0^{-}} \lambda_{k}(g_{t})$ and $\left.\frac{d}{dt}\right|_{t=0^{+}} \lambda_{k}(g_{t})$ are the least and the greatest eigenvalues of $P_{k}$. \\
\end{enumerate}
\end{theo}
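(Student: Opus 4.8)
The plan is to reduce the statement to classical analytic perturbation theory for a family of self-adjoint operators. First I would replace $\Delta_{g_{t}}$, which acts on the moving Hilbert space $L^{2}(M,g_{t})$, by a conjugate operator on the fixed Hilbert space $L^{2}(M,g)$: writing $u_{t}:=(d\mu_{g_{t}}/d\mu_{g})^{1/2}$, the map $f\mapsto u_{t}f$ is a unitary isomorphism $L^{2}(M,g_{t})\to L^{2}(M,g)$, so $\bar{\Delta}_{t}:=u_{t}\,\Delta_{g_{t}}\,u_{t}^{-1}$ is a self-adjoint operator on $L^{2}(M,g)$ with common domain the Sobolev space $H^{2}(M)$, and $\bar{\Delta}_{0}=\Delta_{g}$. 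Since $(g_{t})_{t}$ depends real-analytically on $t$, the coefficients of $\bar{\Delta}_{t}$ do as well, so $(\bar{\Delta}_{t})_{t}$ is a real-analytic family of self-adjoint operators of type (A) in the sense of Kato. Let $m:=\dim E_{k}(g)$ and choose a small circle $\Gamma\subset\mathbf{C}$ enclosing $\lambda_{k}(g)$ and no other point of the spectrum of $\Delta_{g}$. Then the Riesz projector $\mathcal{P}(t):=\frac{1}{2\pi i}\oint_{\Gamma}(\zeta-\bar{\Delta}_{t})^{-1}\,d\zeta$ is real-analytic in $t$ near $0$, has rank $m$, and $\mathcal{P}(0)=\Pi_{k}$. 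Applying Gram--Schmidt to the images under $\mathcal{P}(t)$ of an orthonormal basis of $E_{k}(g)$ produces a real-analytic orthonormal frame of $\mathrm{range}\,\mathcal{P}(t)$, in which $\bar{\Delta}_{t}$ restricted to $\mathrm{range}\,\mathcal{P}(t)$ is represented by a real-analytic family of $m\times m$ symmetric matrices. By Rellich's theorem there are real-analytic functions $\Lambda_{1}(t),\dots,\Lambda_{m}(t)$ (not necessarily increasing in the index) and a real-analytic orthonormal system $\phi_{1}(t),\dots,\phi_{m}(t)$ in $L^{2}(M,g)$ with $\bar{\Delta}_{t}\phi_{i}(t)=\Lambda_{i}(t)\phi_{i}(t)$, with $\Lambda_{i}(0)=\lambda_{k}(g)$, with $\{\phi_{i}(0)\}_{i=1}^{m}$ an orthonormal basis of $E_{k}(g)$, and these $\Lambda_{i}(t)$ are precisely the eigenvalues of $\bar{\Delta}_{t}$ lying inside $\Gamma$ for $t$ near $0$.

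Next I would extract the first-order data. Differentiating $\bar{\Delta}_{t}\phi_{i}(t)=\Lambda_{i}(t)\phi_{i}(t)$ at $t=0$ and pairing in $L^{2}(M,g)$ with $\phi_{j}(0)\in E_{k}(g)$, the self-adjointness of $\Delta_{g}$ together with $\Delta_{g}\phi_{j}(0)=\lambda_{k}(g)\phi_{j}(0)$ cancels the terms involving $\dot{\phi}_{i}(0)$, leaving
\begin{equation*}
\big\langle \dot{\bar{\Delta}}_{0}\,\phi_{i}(0),\,\phi_{j}(0)\big\rangle_{L^{2}(M,g)}=\dot{\Lambda}_{i}(0)\,\delta_{ij}.
\end{equation*}
Here $\dot{\bar{\Delta}}_{0}=\frac{d}{dt}|_{t=0}\Delta_{g_{t}}+[\,\dot{u}_{0},\Delta_{g}\,]$ as operators, where $\dot{u}_{0}$ denotes multiplication by $\frac{d}{dt}|_{t=0}u_{t}$; and because $\Pi_{k}$ commutes with $\Delta_{g}$ one checks immediately that $\Pi_{k}\big([\dot{u}_{0},\Delta_{g}]f\big)=\lambda_{k}(g)\Pi_{k}(\dot u_0 f)-\Delta_g\Pi_k(\dot u_0 f)=0$ for every $f\in E_{k}(g)$, so that $\Pi_{k}\,\dot{\bar{\Delta}}_{0}|_{E_{k}(g)}=P_{k}$. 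Thus the displayed identity says exactly that $P_{k}$ is diagonalized by the orthonormal basis $\{\phi_{i}(0)\}$ of $E_{k}(g)$, with eigenvalues $\dot{\Lambda}_{1}(0),\dots,\dot{\Lambda}_{m}(0)$; in particular $P_{k}$ is symmetric and its spectrum consists precisely of these numbers.

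Finally I would run the min--max argument. For $t$ in a sufficiently small interval $(-\varepsilon,\varepsilon)$, continuity of eigenvalues and the spectral gap separating $\Gamma$ from the rest of the spectrum of $\Delta_{g}$ force $\lambda_{k}(g_{t})$ to be one of $\Lambda_{1}(t),\dots,\Lambda_{m}(t)$, and more precisely the $(k-k_{0}+1)$-th smallest among them, where $k_{0}$ is the least index with $\lambda_{k_{0}}(g)=\lambda_{k}(g)$. On each of the one-sided intervals $(-\varepsilon,0)$ and $(0,\varepsilon)$ the finitely many analytic functions $\Lambda_{i}$ have a fixed relative order (distinct analytic functions agree only at isolated points), so there $\lambda_{k}(g_{t})$ coincides with a single branch $\Lambda_{i}$, whence $\frac{d}{dt}|_{t=0^{\pm}}\lambda_{k}(g_{t})=\dot{\Lambda}_{i}(0)$ is an eigenvalue of $P_{k}$; this proves (1). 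If $\lambda_{k}(g)>\lambda_{k-1}(g)$ then $k_{0}=k$, so $\lambda_{k}(g_{t})=\min_{i}\Lambda_{i}(t)$ near $0$; from $\Lambda_{i}(t)=\lambda_{k}(g)+t\,\dot{\Lambda}_{i}(0)+O(t^{2})$ one gets $\min_{i}\Lambda_{i}(t)=\lambda_{k}(g)+t\min_{i}\dot{\Lambda}_{i}(0)+o(t)$ as $t\downarrow 0$ and $=\lambda_{k}(g)+t\max_{i}\dot{\Lambda}_{i}(0)+o(t)$ as $t\uparrow 0$, giving the right and left derivatives as the least and the greatest eigenvalues of $P_{k}$, which is (2); part (3) is the same computation with $\max_{i}$ replacing $\min_{i}$, since then $\lambda_{k}(g_{t})=\max_{i}\Lambda_{i}(t)$. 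The main obstacle is really the first paragraph: setting up the unitary conjugation to a fixed Hilbert space and verifying carefully that the hypotheses of Kato--Rellich analytic perturbation theory (analyticity of type (A), constancy of the Riesz projector's rank, existence of analytic eigenbranches) genuinely hold; the remaining steps are a routine differentiation of the eigenvalue equation and an elementary first-order expansion.
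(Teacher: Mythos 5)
The paper does not prove this theorem --- it is quoted from El Soufi--Ilias \cite{EI} --- so there is no in-paper proof to compare against; your argument is the standard Kato--Rellich analytic perturbation proof and is essentially the route taken in that reference. I find no gaps: the unitary conjugation $f\mapsto u_{t}f$ to the fixed Hilbert space, the cancellation $\Pi_{k}\bigl([\dot{u}_{0},\Delta_{g}]f\bigr)=0$ for $f\in E_{k}(g)$ identifying the projected derivative of the conjugated family with $P_{k}$, and the fixed one-sided ordering of the finitely many analytic eigenbranches are all handled correctly.
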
 

For any $\alpha \in  Z^{1,1}_{0}(M;\mathbf{R})$, the associated volume-preserving $1$-parameter family of K\"{a}hler metrics $(g_{t})_{t}$ given by (\ref{metrics}) defines the associated operator $P_{k, \alpha}: E_{k}(g) \rightarrow E_{k}(g)$ by (\ref{Pk}). Since $P_{k, \alpha}$ is symmetric with respect to the $L^{2}(g)$-inner product induced by $g$, one can consider the corresponding quadratic form on $E_{k}(g)$, given by
\begin{equation}
\label{quadratic form}
Q_{\alpha}(f) := \int_{M} f P_{k, \alpha}(f) d\mu = \int_{M} f \left( \left.\frac{d}{dt}\right|_{t=0} \Delta_{g_{t}}f \right) d\mu.
\end{equation}
The following proposition is an immediate consequence of Theorem \ref{EI} (1):
\begin{prop} 
\label{extremal} 
Let $(M, J, g, \omega)$ be a compact K\"{a}hler manifold. If the metric $g$ of $(M, J, g, \omega)$ is $\lambda_{k}$-extremal, then the quadratic form $Q_{\alpha}$, defined in (\ref{quadratic form}), is indefinite on $E_{k}(g)$ for any $\alpha \in  Z^{1,1}_{0}(M;\mathbf{R})$.
\end{prop}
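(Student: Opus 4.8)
The plan is to derive this directly from Theorem~\ref{EI}~(1) together with a dimension/trace argument. First I would recall that for any $\alpha \in Z^{1,1}_{0}(M;\mathbf{R})$, the associated operator $P_{k,\alpha}$ is symmetric on $E_{k}(g)$, so its eigenvalues are real; denote them $\mu_{1} \leq \cdots \leq \mu_{d}$ where $d = \dim E_{k}(g)$. By Theorem~\ref{EI}~(1), both one-sided derivatives $\left.\frac{d}{dt}\right|_{t=0^{-}}\lambda_{k}(g_{t})$ and $\left.\frac{d}{dt}\right|_{t=0^{+}}\lambda_{k}(g_{t})$ lie among $\{\mu_{1},\dots,\mu_{d}\}$. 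The key point is to show that the product of these two derivatives has the opposite sign (in the weak sense $\leq 0$) to something controlled by $Q_\alpha$ being semidefinite.

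The cleaner route is the contrapositive: suppose $Q_{\alpha}$ is \emph{definite} on $E_{k}(g)$ for some $\alpha \in Z^{1,1}_{0}(M;\mathbf{R})$, say positive definite (the negative definite case is identical, or follows by replacing $\alpha$ with $-\alpha$, which reverses the sign of $P_{k,\alpha}$ and hence of both one-sided derivatives, leaving their product unchanged). Then every eigenvalue $\mu_{i}$ of $P_{k,\alpha}$ is strictly positive, since $\mu_i = Q_\alpha(e_i)/\|e_i\|_{L^2}^2 > 0$ for a corresponding eigenvector $e_i$. By Theorem~\ref{EI}~(1), both $\left.\frac{d}{dt}\right|_{t=0^{-}}\lambda_{k}(g_{t})$ and $\left.\frac{d}{dt}\right|_{t=0^{+}}\lambda_{k}(g_{t})$ are eigenvalues of $P_{k,\alpha}$, hence both are strictly positive, so their product is strictly positive. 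This contradicts the defining inequality of $\lambda_{k}$-extremality in Definition~\ref{definition}. Therefore, if $g$ is $\lambda_{k}$-extremal, $Q_{\alpha}$ cannot be definite, i.e.\ it is indefinite on $E_{k}(g)$ for every $\alpha \in Z^{1,1}_{0}(M;\mathbf{R})$.

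I should be slightly careful about the degenerate-but-not-definite case (a nonzero semidefinite $Q_\alpha$ with a kernel): "indefinite" here must be read as "not positive definite and not negative definite," which is exactly the negation of "definite," so the contrapositive argument above covers it cleanly — a semidefinite-with-kernel $Q_\alpha$ is not definite and the proposition's conclusion holds vacuously for it. The only genuine input is Theorem~\ref{EI}~(1); parts (2) and (3) are not needed. The main (very mild) obstacle is just bookkeeping the sign: making sure that passing $\alpha \mapsto -\alpha$ correctly reduces the positive-definite and negative-definite cases to one another via $P_{k,-\alpha} = -P_{k,\alpha}$, which follows since $\left.\frac{d}{dt}\right|_{t=0}\Delta_{g_t}$ depends linearly on the deformation direction $\alpha$ through formula~(\ref{Pk}). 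Once that is noted, the proof is a two-line consequence of Theorem~\ref{EI}.
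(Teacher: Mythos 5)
Your proof is correct and is exactly the argument the paper intends: the paper states this proposition as an ``immediate consequence of Theorem \ref{EI} (1)'' without writing out details, and your contrapositive argument (definite $Q_{\alpha}$ forces all eigenvalues of $P_{k,\alpha}$, hence both one-sided derivatives, to have the same strict sign, contradicting Definition \ref{definition}) is the intended two-line deduction. Your remark that ``indefinite'' must be read as ``not definite'' (so that semidefinite forms with kernel are not excluded) is also the correct reading, consistent with El Soufi--Ilias.
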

We also have the following proposition, which follows form Theorem \ref{EI} (2) and (3):
\begin{prop}
Let $(M, J, g, \omega)$ be a compact K\"{a}hler manifold. Suppose that $\lambda_{k}(g) > \lambda_{k-1}(g)$ or $\lambda_{k}(g) < \lambda_{k+1}(g)$ holds. Then the metric $g$ of $(M, J, g)$ is $\lambda_{k}$-extremal if and only if the quadratic form $Q_{\alpha}$ is indefinite on $E_{k}(g)$ for any $\alpha \in  Z^{1,1}_{0}(M;\mathbf{R})$. 
\end{prop}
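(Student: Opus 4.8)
The plan is to prove the equivalence by combining Proposition \ref{extremal} with the converse direction supplied by parts (2) and (3) of Theorem \ref{EI}. The forward implication ($\lambda_{k}$-extremal $\Rightarrow$ $Q_{\alpha}$ indefinite for all $\alpha$) is already Proposition \ref{extremal} and requires no extra hypothesis; the assumption $\lambda_{k}(g) > \lambda_{k-1}(g)$ or $\lambda_{k}(g) < \lambda_{k+1}(g)$ is needed only for the converse. So I would state at the outset that it remains to prove: if $Q_{\alpha}$ is indefinite on $E_{k}(g)$ for every $\alpha \in Z^{1,1}_{0}(M;\mathbf{R})$, then $g$ is $\lambda_{k}$-extremal.

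To prove the converse, fix an arbitrary $\alpha \in Z^{1,1}_{0}(M;\mathbf{R})$ and let $(g_{t})_{t}$ be the associated volume-preserving real-analytic family from \eqref{metrics}, with associated operator $P_{k,\alpha}$ and quadratic form $Q_{\alpha}$. Since $P_{k,\alpha}$ is symmetric on the finite-dimensional inner product space $E_{k}(g)$, its eigenvalues are real, and $Q_{\alpha}$ being indefinite means that the largest eigenvalue $\mu_{+}$ of $P_{k,\alpha}$ is positive (there is a vector with $Q_{\alpha}>0$) and the smallest eigenvalue $\mu_{-}$ is negative (there is a vector with $Q_{\alpha}<0$). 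Now I split into the two cases allowed by the hypothesis. If $\lambda_{k}(g) > \lambda_{k-1}(g)$, then by Theorem \ref{EI}(2) the left derivative $\left.\frac{d}{dt}\right|_{t=0^{-}}\lambda_{k}(g_{t})$ equals the greatest eigenvalue $\mu_{+} > 0$ and the right derivative $\left.\frac{d}{dt}\right|_{t=0^{+}}\lambda_{k}(g_{t})$ equals the least eigenvalue $\mu_{-} < 0$, so their product is negative, in particular $\leq 0$. If instead $\lambda_{k}(g) < \lambda_{k+1}(g)$, then by Theorem \ref{EI}(3) the left derivative is $\mu_{-} < 0$ and the right derivative is $\mu_{+} > 0$, so again the product is negative. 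In either case the defining inequality of Definition \ref{definition} holds for this $\alpha$; since $\alpha$ was arbitrary, $g$ is $\lambda_{k}$-extremal.

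I do not anticipate a serious obstacle; the main point to be careful about is the precise meaning of ``indefinite.'' I would make explicit that for a symmetric quadratic form on a finite-dimensional space, indefiniteness is equivalent to having both a strictly positive and a strictly negative eigenvalue (equivalently, the max eigenvalue is $>0$ and the min eigenvalue is $<0$), which is exactly what is needed to pair with the identification of the one-sided derivatives as extreme eigenvalues of $P_{k,\alpha}$. One should also note that the two cases in the hypothesis are not exclusive but at least one holds by assumption, so it suffices to treat each; and when both hold the conclusions agree (the product is negative either way). The only mild subtlety is that if $\dim E_{k}(g) = 1$ the form cannot be indefinite, but then the hypothesis ``$Q_{\alpha}$ indefinite for all $\alpha$'' is vacuously unsatisfiable unless one interprets it correctly — this edge case does not cause trouble since the statement is an ``if and only if'' and the premise simply fails, consistent with $P_{k,\alpha}$ having a single eigenvalue of one sign and $g$ then not being $\lambda_{k}$-extremal via a suitable $\alpha$; I would remark on this only if space permits.
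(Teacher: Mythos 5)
Your proposal is correct and takes exactly the route the paper intends: the paper states this proposition with no written proof beyond the remark that it follows from Theorem \ref{EI} (2) and (3), and your argument --- forward direction quoted from Proposition \ref{extremal}, converse by identifying the one-sided derivatives of $\lambda_{k}(g_{t})$ with the greatest and least eigenvalues of $P_{k,\alpha}$ so that their product is nonpositive --- is precisely that omitted argument spelled out. One small caution: for consistency with Proposition \ref{extremal} (whose derivation from Theorem \ref{EI}(1) only yields eigenvalues of opposite \emph{weak} sign), ``indefinite'' should be read as ``neither positive definite nor negative definite,'' i.e. $\mu_{+}\geq 0$ and $\mu_{-}\leq 0$ rather than strict inequalities; your converse goes through verbatim under this reading, since it already gives $\mu_{+}\mu_{-}\leq 0$ as required by Definition \ref{definition}.
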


The next corollary immediately follows.

\begin{corr}
\label{converse}
Let $(M, J, g, \omega)$ be a compact K\"{a}hler manifold. Then the metric $g$ is $\lambda_{1}$-extremal if and only if the quadratic form $Q_{\alpha}$ is indefinite on $E_{1}(g)$ for any $\alpha \in  Z^{1,1}_{0}(M;\mathbf{R})$.
\end{corr}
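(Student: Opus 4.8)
The plan is to read off Corollary \ref{converse} as the case $k = 1$ of the proposition immediately preceding it. The implication ``$\lambda_{1}$-extremal $\Rightarrow$ $Q_{\alpha}$ indefinite on $E_{1}(g)$ for every $\alpha$'' is exactly Proposition \ref{extremal} specialized to $k = 1$, which itself uses only part (1) of Theorem \ref{EI}. For the reverse implication I would appeal to the preceding proposition, so the only thing to check is that its hypothesis ``$\lambda_{1}(g) > \lambda_{0}(g)$ or $\lambda_{1}(g) < \lambda_{2}(g)$'' is automatically satisfied when $k = 1$.

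This is clear: adopting the convention that $\lambda_{0}(g) = 0$ is the eigenvalue of the constant functions, the first nonzero eigenvalue of the Laplacian on a compact Riemannian manifold satisfies $\lambda_{1}(g) > 0 = \lambda_{0}(g)$. Hence part (2) of Theorem \ref{EI} applies to $k = 1$ regardless of the multiplicity of $\lambda_{1}(g)$, and the preceding proposition holds for $k = 1$ with no extra assumption. Unwinding it: if $Q_{\alpha}$ is indefinite on $E_{1}(g)$, then the symmetric operator $P_{1, \alpha}$ has both a strictly positive and a strictly negative eigenvalue; by Theorem \ref{EI} (2) the left and right derivatives of $t \mapsto \lambda_{1}(g_{t})$ at $t = 0$ are the greatest and the least eigenvalues of $P_{1, \alpha}$, so their product is strictly negative and $g$ is $\lambda_{1}$-extremal. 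The converse direction being Proposition \ref{extremal}, the two conditions are equivalent.

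Since every step quotes an already-established statement, there is no genuine obstacle; the one point worth spelling out is the bookkeeping convention $\lambda_{0}(g) = 0$, which is exactly what permits invoking Theorem \ref{EI} (2) at $k = 1$ and thereby guarantees that the one-sided derivatives are the \emph{extreme} eigenvalues of $P_{1, \alpha}$ rather than merely some eigenvalues, as part (1) alone would give.
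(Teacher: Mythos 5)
Your proposal is correct and matches the paper's route: the paper simply states that the corollary ``immediately follows'' from the preceding proposition, i.e.\ the case $k=1$ with the convention $\lambda_{0}(g)=0$ making the hypothesis $\lambda_{1}(g)>\lambda_{0}(g)$ automatic. Your explicit unwinding of Theorem \ref{EI}~(2) is just a more careful spelling-out of the same argument.
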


\begin{theo}
Let $(M, J, g, \omega)$ be a compact K\"{a}hler manifold of complex dimension $n$ with $\mbox{Vol}(M,g) =1$. For any $\alpha \in  Z^{1,1}_{0}(M;\mathbf{R})$, the quadratic form $Q_{\alpha}$, given by (\ref{quadratic form}), can be expressed as 
\begin{equation*}
Q_{\alpha}(f) = \int_{M}g(fdd^{c}f, \alpha) d\mu.
\end{equation*}
\end{theo}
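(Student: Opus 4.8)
The plan is to compute $\left.\frac{d}{dt}\right|_{t=0}\Delta_{g_{t}}$ explicitly and then pair it against $f$. First I would reduce to the unnormalized deformation $\widetilde{\omega}_{t}=\omega+t\alpha$: since $\alpha\in Z^{1,1}_{0}(M;\mathbf{R})$ the volume $\mbox{Vol}(M,\widetilde{g}_{t})$ has vanishing first derivative at $t=0$ (because $\frac{d}{dt}\big|_{0}\int_{M}\widetilde{\omega}_{t}^{n}=n\int_{M}\alpha\wedge\omega^{n-1}=\int_{M}g(\alpha,\omega)\omega^{n}=0$ by (\ref{star})), so the normalization factor in (\ref{metrics}) contributes nothing to the first-order term and we may work with $\widetilde{g}_{t}$, i.e.\ with $\widetilde{\omega}_{t}=\omega+t\alpha$, throughout.

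Next I would use the formula (\ref{Laplacian}), namely $\Delta_{g_{t}}f=-g_{t}(\omega_{t},dd^{c}f)$, valid for \emph{any} fixed smooth function $f$ (note $dd^{c}f$ does not depend on $t$). So the $t$-derivative of $\Delta_{g_{t}}f$ is $-\left.\frac{d}{dt}\right|_{0}\big(g_{t}(\omega_{t},dd^{c}f)\big)$, which splits into a term where the metric pairing varies and a term where $\omega_{t}$ varies. The cleanest route is to rewrite the pointwise pairing in the ``wedge'' form (\ref{star}): for a $(1,1)$-form $\beta$ one has $g_{t}(\omega_{t},\beta)\,\omega_{t}^{n}=n\,\beta\wedge\omega_{t}^{n-1}$. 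Applying this with $\beta=dd^{c}f$ gives $-(\Delta_{g_{t}}f)\,\omega_{t}^{n}=n\,dd^{c}f\wedge\omega_{t}^{n-1}$ (this is exactly (\ref{Laplacian-star}) with $g,\omega$ replaced by $g_{t},\omega_{t}$). Differentiating this \emph{polynomial} identity in $t$ at $t=0$ is now routine: the left side gives $-(P\!f)\,\omega^{n}-(\Delta_{g}f)\,n\,\alpha\wedge\omega^{n-1}$ where $P\!f:=\left.\frac{d}{dt}\right|_{0}\Delta_{g_{t}}f$, and the right side gives $n(n-1)\,dd^{c}f\wedge\alpha\wedge\omega^{n-2}$. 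Converting the two ``stray'' wedge terms back to metric pairings via (\ref{star}) — i.e.\ $n\,\alpha\wedge\omega^{n-1}=g(\alpha,\omega)\,\omega^{n}$ and $n(n-1)\,dd^{c}f\wedge\alpha\wedge\omega^{n-2}=\big(g(\alpha,\omega)g(dd^{c}f,\omega)-2\,g(dd^{c}f,\alpha)\big)\omega^{n}$ (the identity $\beta_{1}\wedge\beta_{2}\wedge\omega^{n-2}$ expanded for $(1,1)$-forms) — and using $\Delta_{g}f=-g(dd^{c}f,\omega)$, everything cancels except a clean pointwise expression $P\!f=2\,g(dd^{c}f,\alpha)+(\text{terms proportional to }g(\alpha,\omega))$; after collecting, $Pf$ pointwise equals $2g(dd^c f,\alpha)$ plus a multiple of $g(\alpha,\omega)$ that integrates against $f^2$ or drops by the $Z^{1,1}_0$ condition — I will track constants carefully here.

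Finally, I would multiply by $f$ and integrate. By definition $Q_{\alpha}(f)=\int_{M}f\,(P_{k,\alpha}f)\,d\mu=\int_{M}f\,(\Pi_{k}Pf)\,d\mu=\int_{M}f\,(Pf)\,d\mu$ since $f\in E_{k}(g)$ and $\Pi_{k}$ is the $L^{2}$-orthogonal projection onto $E_{k}(g)$. Substituting the pointwise formula for $Pf$, the term $g(dd^{c}f,\alpha)$ produces exactly $\int_{M}g(f\,dd^{c}f,\alpha)\,d\mu$ up to a constant, and the remaining term — a multiple of $\int_{M}f^{2}g(\alpha,\omega)\,d\mu$ or similar — must be shown to vanish or to be absorbed; this is where an integration by parts (moving $dd^{c}$ off $f$ using that $\alpha$ is $d$-closed, so $\int_M f\,g(dd^c f,\alpha)\,d\mu = \int_M g(f\,dd^c f,\alpha)\,d\mu$ by the adjointness $\int fg(dd^c h,\alpha)=\int g(d^c f\wedge d h \text{-type terms})$ and symmetry) together with the eigenvalue equation $\Delta_g f=\lambda_k f$ will be used to simplify, and the normalization constants must be chosen so that the scaling factor in (\ref{metrics}) — whose second-order behaviour is irrelevant but whose precise first-order vanishing I invoked above — indeed contributes nothing. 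The main obstacle I anticipate is bookkeeping: getting the multilinear-algebra identity for $dd^cf\wedge\alpha\wedge\omega^{n-2}$ in terms of the pointwise inner products $g(dd^cf,\alpha)$, $g(dd^cf,\omega)$, $g(\alpha,\omega)$ exactly right (including the factor $2$), and then checking that all ``extra'' terms cancel against the $\Delta_g f$ pieces and the volume-normalization derivative — rather than any conceptual difficulty.
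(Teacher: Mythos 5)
Your plan follows exactly the paper's route: kill the first-order volume term using $\alpha\in Z^{1,1}_{0}(M;\mathbf{R})$, differentiate the polynomial identity $n\,dd^{c}f\wedge\omega_{t}^{n-1}=-(\Delta_{g_{t}}f)\,\omega_{t}^{n}$ at $t=0$, and convert the wedge terms back to pointwise pairings. Two bookkeeping points, however, need correcting. First, the multilinear identity you quote carries a spurious factor of $2$: with the paper's normalization ($|\omega|^{2}=n$, $n\,\beta\wedge\omega^{n-1}=g(\beta,\omega)\,\omega^{n}$) the correct statement is $n(n-1)\,\phi\wedge\psi\wedge\omega^{n-2}=g(\phi,\omega)g(\psi,\omega)\,\omega^{n}-g(\phi,\psi)\,\omega^{n}$ (check it with $\phi=\psi=\omega$), and with this coefficient the terms proportional to $(\Delta_{g}f)\,g(\alpha,\omega)$ cancel \emph{pointwise and exactly}, leaving $\left.\frac{d}{dt}\right|_{t=0}\Delta_{g_{t}}f=g(dd^{c}f,\alpha)$ with coefficient $1$ and no remainder. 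Second, your fallback — that a leftover multiple of $g(\alpha,\omega)$ would "integrate against $f^{2}$ or drop by the $Z^{1,1}_{0}$ condition" — would not work: the condition only gives $\int_{M}g(\alpha,\omega)\,d\mu=0$, not $\int_{M}f^{2}\,g(\alpha,\omega)\,d\mu=0$, so you cannot afford any surviving term of that type; the proof genuinely needs the exact pointwise cancellation. Finally, the integration by parts you invoke at the end is unnecessary: $g(f\,dd^{c}f,\alpha)=f\,g(dd^{c}f,\alpha)$ holds pointwise since $f$ is a scalar factor, so $Q_{\alpha}(f)=\int_{M}f\,g(dd^{c}f,\alpha)\,d\mu=\int_{M}g(f\,dd^{c}f,\alpha)\,d\mu$ immediately.
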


\begin{proof}
First we calculate $\left.\frac{d}{dt}\right|_{t=0}\mbox{Vol}(M, \widetilde{g_{t}})$ and  $\left.\frac{d}{dt}\right|_{t=0} \omega_{t}$. The volume form $\widetilde{d\mu}_{t}$, determined by $\widetilde{\omega}_{t}$, can be written as 
\begin{equation*}
\widetilde{d\mu}_{t} = \frac{1}{n!} \widetilde{\omega}_{t}^{n} = \frac{1}{n!}\left(\omega^{n} + tn\alpha \wedge \omega^{n-1} \right) + O(t^{2}) = [1+tg(\alpha, \omega)] d\mu + O(t^{2}).
\end{equation*}
Hence one obtains
\begin{equation*}
\mbox{Vol}(M, \widetilde{g}_{t}) = \int_{M}  \widetilde{d\mu}_{t} = 1 + t\int_{M}g(\alpha, \omega) d\mu +O(t^{2}) = 1+O(t^{2}),
\end{equation*}
where in the last equality we have used the the assumptions that $\mbox{Vol}(M, \widetilde{g}) = 1$ and $\alpha \in  Z^{1,1}_{0}(M;\mathbf{R})$. Hence this implies $\left.\frac{d}{dt}\right|_{t=0} \mbox{Vol}(M, \tilde{g_{t}}) = 0$. Thus one obtains
\begin{equation}
\label{omega}
\left. \frac{d}{dt} \right|_{t=0} \omega_{t} = \frac{d}{dt} \left( \mbox{Vol}(M, \widetilde{g}_{t})^{-1/n})  \tilde{\omega}_{t} \right) = \mbox{Vol}(M, \tilde{g}_{0})^{-1/n} \left. \frac{d}{dt} \right|_{t=0} \widetilde{\omega}_{t} = \alpha.
\end{equation}

Next we differentiate 
\begin{equation*}
ndd^{c}f \wedge \omega_{t}^{n-1} = -(\Delta_{g_{t}} f) \omega_{t}^{n},
 \end{equation*}
which  comes from (\ref{Laplacian-star}). Differentiating the left hand side at $t=0$, one obtains 
\begin{equation*}
n(n-1)dd^{c}f \wedge \left( \left.\frac{d}{dt}\right|_{t=0} \omega_{t}\right) \wedge \omega^{n-2} = n (n-1) dd^{c}f \wedge \alpha \wedge \omega^{n-2},
\end{equation*}
where (\ref{omega}) is used. On the other hand, differentiating the right hand side at $t=0$, one obtains
\begin{equation*}
\begin{split}
&\quad -\left( \left.\frac{d}{dt}\right|_{t=0} \Delta_{g_{t}}f \right)\omega^{n} -n(\Delta_{g}f)\left( \left.\frac{d}{dt}\right|_{t=0} \omega_{t} \right) \wedge \omega^{n-1} \\
&= -\left( \left.\frac{d}{dt}\right|_{t=0} \Delta_{g_{t}}f \right)\omega^{n} - n(\Delta_{g}f)\alpha \wedge \omega^{n-1} \\
&= -\left( \left.\frac{d}{dt}\right|_{t=0} \Delta_{g_{t}}f \right)\omega^{n} - (\Delta_{g}f) g(\alpha, \omega) \omega^{n}, \\
\end{split}
\end{equation*}
where (\ref{star}) is used for the last equality. Thus one obtains 
\begin{equation}
\label{derivative}
\left( \left.\frac{d}{dt}\right|_{t=0} \Delta_{g_{t}}f \right)\omega^{n} = -n(n-1)dd^{c}f \wedge \alpha \wedge \omega^{n-2} - g(\alpha, \omega)\omega^{n}.
\end{equation}
It is known \cite{AJK} that the equation
\begin{equation*}
g(\phi, \psi) \omega^{n} = g(\phi, \omega) g(\psi, \omega)\omega^{n} - n(n-1)\phi \wedge \psi \wedge \omega^{n-2}
\end{equation*} 
holds for a pair of real $(1,1)$-forms $\phi$ and $\psi$. Substituting $\phi = dd^{c}f$ and $\psi = \alpha$ into this equation and combining with (\ref{derivative}) and (\ref{Laplacian}), one obtains
\begin{equation*}
\left( \left.\frac{d}{dt}\right|_{t=0} \Delta_{g_{t}}f \right)\omega^{n} = g(dd^{c}f, \alpha)\omega^{n}.
\end{equation*}
Hence one concludes 
\begin{equation*}
\left.\frac{d}{dt}\right|_{t=0} \Delta_{g_{t}}f  = g(dd^{c}f, \alpha)
\end{equation*}
and thus the assertion follows.
\end{proof}

For a compact K\"{a}hler manifold $(M, J, g, \omega)$, $\mathcal{H}^{1,1}(M)$, the vector space of $\mathbf{C}$-valued harmonic $(1,1)$-forms, is known to be finite dimensional. In particular, $\mathcal{H}^{1,1}(M)$ is a closed subspace of $\Omega^{1,1}(M)$ and hence we can define the $\mathbf{C}$-linear $L^{2}(g)$-orthogonal projection $H : \Omega^{1,1}(M) \rightarrow \mathcal{H}^{1,1}(M)$. For a real $(1,1)$-form $\eta$, $H(\eta)$ is a real harmonic $(1,1)$-form. Let $\mathcal{H}^{1,1}(M; \mathbf{R})$ be the vector space of real harmonic $(1,1)$-forms. Set 
\begin{equation*}
\mathcal{H}^{1,1}_{0}(M; \mathbf{R}) :=  \left\{ \alpha \in \mathcal{H}^{1,1}(M;\mathbf{R}) \mid \int_{M} g(\alpha, \omega) d\mu = 0 \right\}
\end{equation*}
and 
\begin{equation*}
C^{\infty}_{0}(M; \mathbf{R}) := \left\{ \varphi \in C^{\infty}(M;\textbf{R}) \mid \int_{M} \varphi d\mu = 0 \right\}.
\end{equation*}
By the $dd^{c}$-lemma, $\alpha \in Z^{1,1}_{0}(M;\mathbf{R})$ can be decomposed as 
\begin{equation*}
\alpha = H(\alpha) + dd^{c} \varphi.
\end{equation*}
This decomposition gives an $\mathbf{R}$-linear bijection between $Z^{1,1}_{0}(M;\mathbf{R})$ and $\mathcal{H}^{1,1}_{0}(M; \mathbf{R}) \times C^{\infty}_{0}(M; \mathbf{R})$. 

Apostolov-Jakobson-Kokarev\cite{AJK} introduced the fourth order differential operator $L(f) := \delta^{c}\delta (fdd^{c}f)$, where $\delta$ and $\delta^{c}$ are the $L^{2}$-adjoints of $d$ and $d^{c}$ respectively. They proved that the equation 
\begin{equation*}
L(f) = \lambda_{k}(g) f^{2} -2\lambda_{k}(g) |\nabla f|^{2} + |dd^{c} f|^{2} 
\end{equation*}
holds for an eigenfunction $f \in E_{k}(g)$.

We have the following theorem:
\begin{theo}
\label{harmonic}
Let $(M, J, g, \omega)$ be a compact K\"{a}hler manifold of complex dimension $n$. The following are equivalent:
\begin{enumerate}
\item For any $\alpha \in  Z^{1,1}_{0}(M;\mathbf{R})$, the quadratic form $Q_{\alpha}$ given by (\ref{quadratic form}) is indefinite on the eigenspace $E_{k}(g)$. \\
\item There exists a finite subset $\{ f_{1}, \cdots , f_{N} \} \subset E_{k}(g)$ such that the following equations hold:
\begin{equation}
\label{harmonic-L}
\left\{ \,
\begin{aligned}
& H \left( \sum_{j=1}^{N} f_{j}dd^{c}f_{j} \right) = -\omega \\
&\sum_{j=1}^{N}L(f_{j} ) = \lambda_{k}(g)^{2}\left( \sum_{j=1}^{N} f_{j}^{2} \right) -2\lambda_{k}(g) \left(\sum_{j=1}^{N} |\nabla f_{j}|^{2} \right) + \sum_{j=1}^{N} |dd^{c}f_{j}|^{2} = 0  \\
\end{aligned}
\right.
\end{equation}
\\
\end{enumerate}
\end{theo}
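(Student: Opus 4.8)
The plan is to translate the equivalence into a hyperplane–separation statement for a compact convex set in a finite-dimensional vector space, in the spirit of the arguments of El Soufi--Ilias \cite{EI2} and Apostolov--Jakobson--Kokarev \cite{AJK}. The first step is to put $Q_{\alpha}$ in a form adapted to the $dd^{c}$-lemma decomposition $\alpha = H(\alpha) + dd^{c}\varphi$, with $H(\alpha)\in\mathcal{H}^{1,1}_{0}(M;\mathbf{R})$ and $\varphi\in C^{\infty}_{0}(M;\mathbf{R})$. By the preceding theorem $Q_{\alpha}(f) = \int_{M} g(f\,dd^{c}f,\alpha)\,d\mu$, and since $H$ is the $L^{2}(g)$-orthogonal projection onto the harmonic $(1,1)$-forms while $dd^{c}$ has formal adjoint $\delta^{c}\delta$, this becomes
\begin{equation*}
Q_{\alpha}(f) = \int_{M} g\bigl(H(f\,dd^{c}f),\,H(\alpha)\bigr)\,d\mu + \int_{M} \varphi\,\delta^{c}\delta(f\,dd^{c}f)\,d\mu = \int_{M} g\bigl(H(f\,dd^{c}f),\,H(\alpha)\bigr)\,d\mu + \int_{M} \varphi\,L(f)\,d\mu .
\end{equation*}
Decomposing $H(f\,dd^{c}f) = c(f)\,\omega + h(f)$ with $h(f)\in\mathcal{H}^{1,1}_{0}(M;\mathbf{R})$, the identities $\int_{M} g(f\,dd^{c}f,\omega)\,d\mu = -\lambda_{k}(g)\|f\|_{L^{2}}^{2}$ (from (\ref{Laplacian})) and $\|\omega\|_{L^{2}}^{2}=n$ give $c(f) = -\lambda_{k}(g)\|f\|_{L^{2}}^{2}/n$; since $H(\alpha)\perp\omega$, the $\omega$-part drops and $Q_{\alpha}(f) = \int_{M} g(h(f),H(\alpha))\,d\mu + \int_{M}\varphi\,L(f)\,d\mu$. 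One also notes $\int_{M}L(f)\,d\mu = 0$, so $L(f)\in C^{\infty}_{0}(M;\mathbf{R})$.

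Next I would encode the system (\ref{harmonic-L}) geometrically. Because $f\,dd^{c}f = \|f\|_{L^{2}}^{2}\,(e\,dd^{c}e)$ and $L(f) = \|f\|_{L^{2}}^{2}\,L(e)$ for $f = \|f\|_{L^{2}}\,e$, matching the $\omega$-component of the first equation of (\ref{harmonic-L}) forces $\sum_{j}\|f_{j}\|_{L^{2}}^{2} = n/\lambda_{k}(g)$, its $\mathcal{H}^{1,1}_{0}$-component forces $\sum_{j}h(f_{j})=0$, and the second equation reads $\sum_{j}L(f_{j})=0$. Setting $s_{j} := \lambda_{k}(g)\|f_{j}\|_{L^{2}}^{2}/n$, the system (\ref{harmonic-L}) becomes equivalent to the assertion that $(0,0)$ lies in
\begin{equation*}
\mathcal{K} := \mathrm{Conv}\bigl\{\,\bigl(h(e),\,L(e)\bigr) \;:\; e\in E_{k}(g),\ \|e\|_{L^{2}}=1\,\bigr\} \;\subset\; \mathcal{H}^{1,1}_{0}(M;\mathbf{R})\times C^{\infty}_{0}(M;\mathbf{R}).
\end{equation*}
The crucial structural point is that $\{(h(e),L(e)):\|e\|_{L^{2}}=1\}$ is a compact subset of a finite-dimensional subspace: $h(e)\in\mathcal{H}^{1,1}_{0}(M;\mathbf{R})$, which is finite-dimensional, and $L(e)$ lies in the span of the finitely many functions $\lambda_{k}(g)\phi_{i}\phi_{j} - 2\lambda_{k}(g)\langle\nabla\phi_{i},\nabla\phi_{j}\rangle + g(dd^{c}\phi_{i},dd^{c}\phi_{j})$ attached to a basis $\{\phi_{i}\}$ of $E_{k}(g)$; hence $\mathcal{K}$ is compact and convex.

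The implication (2)$\Rightarrow$(1) is then immediate: for $\{f_{j}\}$ as in (\ref{harmonic-L}) and any $\alpha\in Z^{1,1}_{0}(M;\mathbf{R})$, the formula for $Q_{\alpha}$ together with $\sum_{j}h(f_{j})=0$ and $\sum_{j}L(f_{j})=0$ gives $\sum_{j}Q_{\alpha}(f_{j})=0$; since the $f_{j}$ are not all zero (otherwise $H(\sum_{j}f_{j}\,dd^{c}f_{j})=0\neq-\omega$), the form $Q_{\alpha}$ is neither positive nor negative definite, i.e.\ indefinite. For (1)$\Rightarrow$(2) I would argue contrapositively: if $(0,0)\notin\mathcal{K}$, then since $\mathcal{K}$ is compact and convex in a finite-dimensional space, the separation theorem yields a linear functional that is $\geq\varepsilon>0$ on $\mathcal{K}$; representing its two components by Riesz duality as $h_{0}\in\mathcal{H}^{1,1}_{0}(M;\mathbf{R})$ and $\varphi_{0}\in C^{\infty}_{0}(M;\mathbf{R})$ and setting $\alpha := h_{0} + dd^{c}\varphi_{0}\in Z^{1,1}_{0}(M;\mathbf{R})$, one gets $Q_{\alpha}(e) = \int_{M} g(h(e),h_{0})\,d\mu + \int_{M}\varphi_{0}\,L(e)\,d\mu \geq\varepsilon$ for every unit $e\in E_{k}(g)$, so $Q_{\alpha}$ is positive definite and (1) fails. (Ruling out positive definiteness suffices, since $Q_{-\alpha}=-Q_{\alpha}$.)

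I expect the main obstacle to be the (1)$\Rightarrow$(2) direction, and within it two bookkeeping points. First, one must confirm that $h(e)$ and $L(e)$ genuinely vary in one common finite-dimensional vector space; this is exactly what turns the infimum of the separating functional over $\mathcal{K}$ from merely nonnegative into strictly positive, and hence makes $Q_{\alpha}$ \emph{definite} rather than just semidefinite. Second, one must check that the separating functional is realised by an \emph{admissible} deformation, i.e.\ by an element of $Z^{1,1}_{0}(M;\mathbf{R})$ of the form $h_{0}+dd^{c}\varphi_{0}$; here the $dd^{c}$-lemma decomposition quoted before the theorem and the automatic relations $\int_{M}g(h(e),\omega)\,d\mu = 0$ and $\int_{M}L(e)\,d\mu = 0$ are precisely what place $h_{0}$ and $\varphi_{0}$ in $\mathcal{H}^{1,1}_{0}(M;\mathbf{R})$ and $C^{\infty}_{0}(M;\mathbf{R})$ respectively, so that $\alpha\in Z^{1,1}_{0}(M;\mathbf{R})$.
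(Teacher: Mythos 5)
Your proposal is correct and follows essentially the same route as the paper: express $Q_{\alpha}$ via the $dd^{c}$-decomposition of $\alpha$ as a pairing of $(H(f\,dd^{c}f),L(f))$ against $(H(\alpha),\varphi)$, and apply hyperplane separation to a convex hull of this eigenfunction data in a finite-dimensional space. The only difference is cosmetic but welcome: by restricting to unit eigenfunctions and splitting off the $\omega$-component you separate the origin from a genuinely \emph{compact} convex set $\mathcal{K}$, which makes the strict ($\geq\varepsilon$) separation immediate, whereas the paper separates $(-\omega,0)$ from the unnormalized convex hull and leaves that compactness point implicit.
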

The proof of this theorem is inspired by that of Lemma 3.1 in \cite{EI}.

\begin{proof}
We may assume that $\mbox{Vol}(M,g) =1$. We assume the condition $(1)$. Let $K$ be the convex hull of $\{ \left( H(fdd^{c}f) , L(f) \right) \mid f \in E_{k}(g) \}$ in $\mathcal{H}^{1,1}(M; \mathbf{R}) \times C^{\infty}(M; \mathbf{R})$. Since $E_{k}(g)$ is finite dimensional, $K$ is contained in a finite dimensional subspace of $\mathcal{H}^{1,1}(M; \mathbf{R}) \times C^{\infty}(M; \mathbf{R})$. We consider the product $L^{2}$-inner metric on the subspace. We assume that $(-\omega, 0) \notin K$. Then the hyperplane separation theorem implies that there exists $\left(\widetilde{\alpha}_{H}, \widetilde{\varphi} \right) \in \mathcal{H}^{1,1}(M; \mathbf{R}) \times C^{\infty}(M; \mathbf{R})$ such that the inequalities 
\begin{equation}
\label{inequalities}
\int_{M} g(-\omega, \widetilde{\alpha}_{H}) d\mu < 0 \quad \mbox{and} \quad \int_{M} g(\eta, \widetilde{\alpha}_{H}) d\mu + \int_{M}s\varphi d\mu \geq 0
\end{equation}
hold for all $(\eta, s) \in K\setminus{\{0\}}$. Consider $\alpha_{H} \in  \mathcal{H}^{1,1}_{0}(M;\mathbf{R})$ and $\varphi \in C^{\infty}_{0}(M; \mathbf{R})$ respectively defined by 
\begin{equation}
\label{alpha}
\alpha_{H} := \widetilde{\alpha}_{H}- \frac{1}{n} \left( \int_{M} g(\omega, \widetilde{\alpha}_{H}) d\mu \right) \omega.
\end{equation}
\begin{equation}
\label{phi}
\varphi := \widetilde{\varphi}- \int_{M} \widetilde{\varphi} d\mu.
\end{equation}
Set 
\begin{equation*}
\alpha := \alpha_{H} + dd^{c}\varphi = \alpha_{H} + dd^{c}\widetilde{\varphi}.
\end{equation*}
Then for any $f \in E_{k}(g)\setminus{\{0\}}$, one has 
\begin{equation*}
\begin{split}
Q_{\alpha}(f) &= \int_{M} g(fdd^{c}f, \alpha) d\mu \\
&= \int_{M} g(fdd^{c}f, \alpha_{H}) d\mu + \int_{M}g(fdd^{c}f, dd^{c}\widetilde{\varphi}) d\mu \\
&= \int_{M} g(H(fdd^{c}f), \widetilde{\alpha}_{H}) d\mu - \frac{1}{n} \left[ \int_{M}g(fdd^{c}f, \omega) d\mu \right] \left[\int_{M} g(\omega, \widetilde{\alpha}_{H} ) d\mu  \right] + \int_{M}L(f)\widetilde{\varphi} d\mu \\
& = \int_{M} g(H(fdd^{c}f), \widetilde{\alpha}_{H}) d\mu + \int_{M}L(f)\widetilde{\varphi} d\mu + \frac{\lambda_{k}(g) }{n} \left[ \int_{M} f^{2} d\mu \right] \left[\int_{M}g(\omega, \widetilde{\alpha}_{H}) d\mu \right] \\
&>0.
\end{split}
\end{equation*}
This contradicts the condition (1) and hence one concludes that $(-\omega, 0) \in K$. This implies that $(1) \Rightarrow (2)$.

Conversely, we assume that there exists a finite subset $\{ f_{1}, \cdots , f_{N} \} \subset E_{k}(g)$ satisfying $(\ref{harmonic-L})$. Take any $\alpha_{H} \in \mathcal{H}^{1,1}_{0}(M; \mathbf{R})$ and $\varphi \in C^{\infty}_{0}(M; \mathbf{R})$ and consider $\alpha \in Z^{1,1}_{0}(M; \mathbf{R})$ defined by $\alpha := \alpha_{H} + dd^{c} \varphi$.
Then one has 
\begin{equation*}
\begin{split}
\sum_{j=1}^{N}Q_{\alpha}(f_{j}) &= \sum_{j=1}^{N} \int_{M} g (f_{j}dd^{c}f_{j}, \alpha) d\mu \\
&= - \int_{M} g\left(\sum_{j=1}^{N}f_{j}dd^{c}f_{j},  \alpha_{H}  \right) d\mu + \sum_{j=1}^{N} \int_{M}g(f_{j}dd^{c}f_{j}, dd^{c}\varphi) d\mu \\
&= \int_{M}g(-\omega, \alpha_{H}) d\mu + \sum_{j=1}^{N}\int_{M} L(f_{j}) \varphi d\mu \\
&=0. \\
\end{split}
\end{equation*}
This implies that $Q_{\alpha}$ is indefinite on $E_{k}(g)$. This completes the proof.
\end{proof}



Combining Corollary \ref{converse} and Theorem \ref{harmonic}, we conclude the following:

\begin{theo}
\label{conclusion}
Let $(M, J, g, \omega)$ be a compact K\"{a}hler manifold. Suppose that the K\"{a}hler metric $g$ is $\lambda_{k}$-extremal. Then there exists a finite subset $\{ f_{1}, \cdots , f_{N} \} \subset E_{k}(g)$ satisfying $(\ref{harmonic-L})$. For $k=1$, the existence of such a finite collection of eigenfunctions is a necessary and sufficient condition for the K\"{a}hler metric $g$ to be $\lambda_{1}$-extremal.
\end{theo}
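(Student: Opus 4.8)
The plan is to assemble the statement from three results already established above, namely Proposition \ref{extremal}, Corollary \ref{converse}, and Theorem \ref{harmonic}; essentially no new computation is required.

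First I would treat the assertion for arbitrary $k$. Suppose the K\"{a}hler metric $g$ is $\lambda_{k}$-extremal. Proposition \ref{extremal} then says that the quadratic form $Q_{\alpha}$ on $E_{k}(g)$ is indefinite for every $\alpha \in Z^{1,1}_{0}(M;\mathbf{R})$, which is verbatim condition $(1)$ of Theorem \ref{harmonic}. Applying the implication $(1) \Rightarrow (2)$ of that theorem produces a finite subset $\{f_{1}, \ldots, f_{N}\} \subset E_{k}(g)$ satisfying the system (\ref{harmonic-L}). This is exactly the first claim.

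Next I would deal with the case $k = 1$, where the converse is also available. By Corollary \ref{converse}, the K\"{a}hler metric $g$ is $\lambda_{1}$-extremal if and only if $Q_{\alpha}$ is indefinite on $E_{1}(g)$ for every $\alpha \in Z^{1,1}_{0}(M;\mathbf{R})$ --- again condition $(1)$ of Theorem \ref{harmonic} with $k=1$. Since Theorem \ref{harmonic} furnishes the full equivalence $(1) \Leftrightarrow (2)$, composing the two equivalences gives that $g$ is $\lambda_{1}$-extremal if and only if there exists a finite subset $\{f_{1}, \ldots, f_{N}\} \subset E_{1}(g)$ satisfying (\ref{harmonic-L}), which is the second claim.

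The one point I would make explicit is why the equivalence, and not merely one implication, is available exactly when $k=1$. The backward direction passes through Corollary \ref{converse}, which rests on parts $(2)$ and $(3)$ of Theorem \ref{EI} and therefore needs a spectral gap, $\lambda_{k}(g) > \lambda_{k-1}(g)$ or $\lambda_{k}(g) < \lambda_{k+1}(g)$; for $k=1$ the first inequality holds automatically, since the kernel of $\Delta_{g}$ is spanned by the constants, so that $\lambda_{1}(g) > \lambda_{0}(g) = 0$, whereas for general $k$ no such gap is assumed and only Proposition \ref{extremal}, hence only necessity, is at our disposal. I do not expect a genuine obstacle in this synthesis: the real content sits in Theorem \ref{harmonic} --- the hyperplane-separation argument applied to the convex hull of $\{\,(H(f\,dd^{c}f),\, L(f)) \mid f \in E_{k}(g)\,\}$ --- and in the prior identification $Q_{\alpha}(f) = \int_{M} g(f\,dd^{c}f, \alpha)\, d\mu$, both of which are already in hand.
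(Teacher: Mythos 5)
Your proposal is correct and matches the paper's own argument, which derives the theorem immediately by combining Corollary \ref{converse} (together with Proposition \ref{extremal} for general $k$) with Theorem \ref{harmonic}. Your added remark on why the full equivalence is available precisely for $k=1$ (the automatic gap $\lambda_{1}(g) > 0$) is a correct and useful clarification, but the route is the same.
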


We remark the relationship between Theorem \ref{conclusion} and Theorem 2.1 in \cite{AJK}, which we quote as follows:
\begin{theo}[\cite{AJK}]
\label{AJKtheo}
Let $(M, J, g)$ be a compact K\"{a}hler manifold. The K\"{a}hler metric $g$ is $\lambda_{k}$-extremal for all the deformations of the K\"{a}hler metric within its K\"{a}hler class. Then there exists a finite subset $\{ f_{1}, \cdots , f_{N} \} \subset E_{k}(g)$ such that the equation 
\begin{equation}
\label{AJKeq}
\sum_{j=1}^{N}L(f_{j}) = \lambda_{k}(g)^{2}\left( \sum_{j=1}^{N} f_{j}^{2} \right) -2\lambda_{k}(g) \left(\sum_{j=1}^{N} |\nabla f_{j}|^{2} \right) + \sum_{j=1}^{N} |dd^{c}f_{j}|^{2} = 0
\end{equation}
holds. For $k=1$, the existence of such a finite collection of eigenfunctions is a necessary and sufficient condition for the K\"{a}hler metric $g$ to be $\lambda_{1}$-extremal for all the deformations of the K\"{a}hler metric within its K\"{a}hler class. 
\end{theo}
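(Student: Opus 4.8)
The plan is to assemble the pieces already in place. For general $k$, I would start from the hypothesis that $g$ is $\lambda_{k}$-extremal. By Proposition \ref{extremal}, this forces the quadratic form $Q_{\alpha}$ on $E_{k}(g)$ to be indefinite for every $\alpha \in Z^{1,1}_{0}(M;\mathbf{R})$; in other words, condition $(1)$ of Theorem \ref{harmonic} is satisfied. Applying the implication $(1) \Rightarrow (2)$ of Theorem \ref{harmonic} then produces a finite subset $\{ f_{1}, \ldots, f_{N} \} \subset E_{k}(g)$ satisfying the system $(\ref{harmonic-L})$, which is exactly the first assertion.

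For the case $k=1$, I would upgrade this to an equivalence. Here Corollary \ref{converse} supplies the converse that is unavailable in general: the metric $g$ is $\lambda_{1}$-extremal \emph{if and only if} $Q_{\alpha}$ is indefinite on $E_{1}(g)$ for every $\alpha \in Z^{1,1}_{0}(M;\mathbf{R})$, i.e.\ if and only if condition $(1)$ of Theorem \ref{harmonic} holds with $k=1$. Chaining this with the equivalence $(1) \Leftrightarrow (2)$ of Theorem \ref{harmonic} gives that $\lambda_{1}$-extremality is equivalent to the existence of a finite collection of first eigenfunctions satisfying $(\ref{harmonic-L})$.

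The only point requiring a little care — and the reason the two cases are not symmetric — is that for general $k$ one has at one's disposal only the one-sided Proposition \ref{extremal} (which rests on Theorem \ref{EI}(1)), whereas for $k=1$ the gap condition $\lambda_{1}(g) < \lambda_{2}(g)$ is automatic, so Theorem \ref{EI}(2)--(3) applies and indefiniteness of all the $Q_{\alpha}$ becomes not merely necessary but also sufficient for extremality. Beyond this, the argument is pure bookkeeping: one must check that the class of deformations entering the definition of $\lambda_{k}$-extremality matches the hypotheses of Theorem \ref{harmonic} (namely that $\alpha$ ranges over $Z^{1,1}_{0}(M;\mathbf{R})$, decomposed via the $dd^{c}$-lemma as $\alpha = H(\alpha) + dd^{c}\varphi$ with $(H(\alpha),\varphi) \in \mathcal{H}^{1,1}_{0}(M;\mathbf{R}) \times C^{\infty}_{0}(M;\mathbf{R})$), which was already arranged in the discussion preceding Theorem \ref{harmonic}. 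No new computation is needed, so I do not anticipate any genuine obstacle.
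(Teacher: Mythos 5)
Your argument proves the wrong statement, because it conflates the two notions of extremality that this paper is at pains to distinguish. Theorem \ref{AJKtheo} is quoted from \cite{AJK} and concerns $\lambda_{k}$-extremality \emph{within the K\"ahler class}, i.e.\ only for deformations $\widetilde{\omega}_{t} = \omega + t\,dd^{c}\psi$. Proposition \ref{extremal}, Corollary \ref{converse} and Theorem \ref{harmonic}, which you invoke, all concern the strictly stronger notion of $\lambda_{k}$-extremality for \emph{all} volume-preserving K\"ahler deformations, where $\alpha$ ranges over all of $Z^{1,1}_{0}(M;\mathbf{R})$ and not just over the exact forms $dd^{c}\varphi$. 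From the hypothesis of Theorem \ref{AJKtheo} you are only entitled to conclude that $Q_{\alpha}$ is indefinite on $E_{k}(g)$ for $\alpha$ of the form $dd^{c}\varphi$; Proposition \ref{extremal} simply does not apply. Worse, if your chain of implications were valid it would show that extremality within the K\"ahler class already forces the full system (\ref{harmonic-L}), including $H\bigl(\sum_{j} f_{j}dd^{c}f_{j}\bigr) = -\omega$, and hence (for $k=1$, by Theorem \ref{conclusion}) extremality for all volume-preserving deformations. Example \ref{illustrate} ($g_{FS}\times c\,g_{FS}$ with $c\neq 1$) shows this implication is false.

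The correct route --- essentially the proof in \cite{AJK}, which this paper does not reproduce but merely cites --- restricts the separation argument of Theorem \ref{harmonic} to the subspace of exact deformations: one considers the convex hull of $\{L(f) \mid f\in E_{k}(g)\}$ in $C^{\infty}(M;\mathbf{R})$ alone, separates it from the origin if the origin does not belong to it, and obtains a single test function $\varphi$ for which $Q_{dd^{c}\varphi}$ is definite on $E_{k}(g)$, contradicting extremality within the class. The harmonic component $\alpha_{H}$ never enters, which is precisely why the conclusion is only $\sum_{j} L(f_{j})=0$ and not the stronger pair of equations (\ref{harmonic-L}); the $k=1$ converse is then the computation $\sum_{j}Q_{dd^{c}\varphi}(f_{j}) = \int_{M}\bigl(\sum_{j}L(f_{j})\bigr)\varphi\, d\mu = 0$ together with the K\"ahler-class analogue of Corollary \ref{converse}.
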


Obviously, the condition $(\ref{harmonic-L})$ is stronger than the condition $(\ref{AJKeq})$. This fact is natural since our $\lambda_{k}$-extremality is stronger than that in \cite{AJK}. Using Theorem \ref{AJKtheo} above, one can show that if the K\"{a}hler metric $g$ is extremal for the functional $\lambda_{k}$ within its K\"{a}hler class, then the eigenvalue $\lambda_{k}(g)$ is multiple \cite{AJK}. Hence we obtain the same conclusion for our $\lambda_{k}$-extremality:

\begin{corr}
Let $(M, J, g, \omega)$ be a compact K\"{a}hler manifold. Suppose that $g$ is $\lambda_{k}$-extremal for all the volume-preserving deformations of the K\"{a}hler metric. Then the eigenvalue $\lambda_{k}(g)$ is multiple.
\end{corr}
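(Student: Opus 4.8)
The plan is to contradict the conclusion of Theorem~\ref{conclusion}: I will show that if $\lambda_{k}(g)$ is simple, then no finite collection of $\lambda_{k}(g)$-eigenfunctions can satisfy $(\ref{harmonic-L})$, whence a $\lambda_{k}$-extremal metric cannot have a simple $k$-th eigenvalue. (A shorter route is available: $\lambda_{k}$-extremality for all volume-preserving deformations of the K\"{a}hler metric is in particular $\lambda_{k}$-extremality for the deformations $\omega+t\,dd^{c}\psi$ inside the K\"{a}hler class, so the multiplicity statement of Apostolov--Jakobson--Kokarev attached to Theorem~\ref{AJKtheo} applies verbatim; what follows is essentially a self-contained version of that argument, using Theorem~\ref{conclusion} in place of Theorem~\ref{AJKtheo}.)

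So suppose, for contradiction, that $\dim_{\mathbf{R}}E_{k}(g)=1$, say $E_{k}(g)=\mathbf{R}f$ with $f\not\equiv 0$. Applying Theorem~\ref{conclusion} to the $\lambda_{k}$-extremal metric $g$, I obtain $f_{1},\dots,f_{N}\in E_{k}(g)$ satisfying $(\ref{harmonic-L})$. Writing $f_{j}=a_{j}f$ and setting $c:=\sum_{j=1}^{N}a_{j}^{2}\ge 0$, the algebraic identity $\sum_{j}f_{j}\,dd^{c}f_{j}=c\,f\,dd^{c}f$ together with the homogeneity $L(a_{j}f)=a_{j}^{2}L(f)$ (immediate from $L(f)=\delta^{c}\delta(f\,dd^{c}f)$) turns the two lines of $(\ref{harmonic-L})$ into
\begin{equation*}
c\,H(f\,dd^{c}f)=-\omega, \qquad c\bigl(\lambda_{k}(g)^{2}f^{2}-2\lambda_{k}(g)|\nabla f|^{2}+|dd^{c}f|^{2}\bigr)=0.
\end{equation*}
Since $\omega\neq 0$, the first equation forces $c>0$; dividing the second by $c$ then yields the pointwise identity $\lambda_{k}(g)^{2}f^{2}-2\lambda_{k}(g)|\nabla f|^{2}+|dd^{c}f|^{2}\equiv 0$ on $M$.

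Finally I invoke the maximum principle. As $\lambda_{k}(g)>0$, integrating $\Delta_{g}f=\lambda_{k}(g)f$ gives $\int_{M}f\,d\mu=0$, so the non-constant function $f$ satisfies $\max_{M}f>0$; at a point $p$ where this maximum is attained one has $\nabla f(p)=0$, and hence the left-hand side of the identity above evaluated at $p$ equals $\lambda_{k}(g)^{2}f(p)^{2}+|dd^{c}f(p)|^{2}\ge\lambda_{k}(g)^{2}(\max_{M}f)^{2}>0$, a contradiction. Therefore $\dim_{\mathbf{R}}E_{k}(g)\ge 2$, i.e.\ $\lambda_{k}(g)$ is multiple. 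I do not foresee a genuine obstacle: the only points requiring a word of care are that $L$ is homogeneous of degree two in its argument (so the collection collapses onto the line $\mathbf{R}f$) and that $c\neq 0$ (read off from the harmonic-projection equation and $\omega\neq 0$), and both are immediate once Theorem~\ref{conclusion} is in hand.
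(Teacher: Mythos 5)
Your proof is correct and follows essentially the same route as the paper, which simply observes that the second equation of $(\ref{harmonic-L})$ coincides with the Apostolov--Jakobson--Kokarev condition $(\ref{AJKeq})$ and cites their multiplicity result; your argument is a self-contained write-up of that same reasoning (quadratic homogeneity of $L$, positivity of $c$ from the harmonic-projection equation, and the maximum-point evaluation of the pointwise identity), and all the steps check out.
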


El Soufi-Ilias\cite{EI2} showed that the metric on a compact isotropy irreducible homogeneous space is $\lambda_{1}$-extremal with respect to all the volume-preserving deformations of the Riemannian metric. Since their $\lambda_{1}$-exremality is stronger than ours, we have the following proposition:
\begin{prop}
\label{irreducible}
Let $G/K$ be a compact isotropy irreducible homogeneous K\"{a}hler manifold. Then the metric is $\lambda_{1}$-extremal for all the volume-preserving deformations of the K\"{a}hler metric.
\end{prop}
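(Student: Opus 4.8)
The plan is to obtain this proposition with essentially no new work, by reducing it to the theorem of Nadirashvili and El Soufi--Ilias (Theorem~\ref{Nad-intro}), exactly as the sentence preceding the statement suggests. The key observation is that the $\lambda_{1}$-extremality of Definition~\ref{definition} is a \emph{weaker} property than being extremal for $\lambda_{1}$ with respect to all volume-preserving deformations of the Riemannian metric: the families $(g_{t})_{t}$ of \eqref{metrics} produced from elements of $Z^{1,1}_{0}(M;\mathbf{R})$ form a subclass of all real-analytic volume-preserving families of Riemannian metrics through $g$, so if the product of one-sided derivatives of $\lambda_{1}$ is non-positive along \emph{every} Riemannian deformation, then it is in particular non-positive along every K\"ahler deformation of the form \eqref{metrics}.

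Concretely, I would first note that on a compact isotropy irreducible homogeneous space $G/K$ the $G$-invariant metric is unique up to a positive scalar, because the isotropy representation is irreducible; hence the $G$-invariant K\"ahler metric $g$ on $G/K$ agrees with the canonical metric up to homothety, and, $\lambda_{1}$-extremality being scale invariant, we may assume $g$ is the canonical metric. Then I would invoke El Soufi--Ilias~\cite{EI2}: the canonical metric on a compact isotropy irreducible homogeneous space is extremal for $\lambda_{1}$ among all volume-preserving deformations of the Riemannian metric. Finally, for an arbitrary $\alpha \in Z^{1,1}_{0}(M;\mathbf{R})$, the family $(g_{t})_{t}$ of \eqref{metrics} is, as established in Section~2, a volume-preserving real-analytic family of Riemannian metrics with $g_{0}=g$; applying the El Soufi--Ilias result to it gives
\[
\left(\left.\frac{d}{dt}\right|_{t=0^{-}}\lambda_{1}(g_{t})\right)\cdot\left(\left.\frac{d}{dt}\right|_{t=0^{+}}\lambda_{1}(g_{t})\right)\le 0,
\]
which is precisely the condition of Definition~\ref{definition}. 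I do not expect a real obstacle in this argument; the only points that need a word of justification are the identification of the homogeneous K\"ahler metric with the canonical metric, and the already-noted fact that the K\"ahler deformations \eqref{metrics} fall within the class of deformations handled in \cite{EI2}.

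As a variant that stays inside the framework of this paper, one could instead verify the criterion of Theorem~\ref{conclusion} directly. I would take an $L^{2}(g)$-orthonormal basis $\{f_{1},\dots,f_{N}\}$ of $E_{1}(g)$ and use that $G$ acts orthogonally on $E_{1}(g)$ to see that $\sum_{j} f_{j}^{2}$, $\sum_{j}|\nabla f_{j}|^{2}$, $\sum_{j}|dd^{c}f_{j}|^{2}$, $\sum_{j}L(f_{j})$ and $\sum_{j} f_{j}\,dd^{c}f_{j}$ are all $G$-invariant. Since a $G$-invariant function on $G/K$ is constant while $\int_{M}\sum_{j}L(f_{j})\,d\mu = 0$ (each $L(f_{j})$ being $\delta^{c}$ applied to a $1$-form), the second equation of \eqref{harmonic-L} follows. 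Similarly $H\!\left(\sum_{j} f_{j}\,dd^{c}f_{j}\right)$ is a $G$-invariant real harmonic $(1,1)$-form, hence a multiple $c\,\omega$ of $\omega$; pairing with $\omega$ and using \eqref{Laplacian} and \eqref{omega-norm} gives $cn = -\lambda_{1}(g)\sum_{j}\int_{M} f_{j}^{2}\,d\mu < 0$, so after replacing each $f_{j}$ by $|c|^{-1/2}f_{j}$ (which multiplies the second equation by the positive factor $|c|^{-1}$ and so preserves it) we get $c = -1$, i.e.\ the first equation of \eqref{harmonic-L}. Theorem~\ref{conclusion} then applies. In this route the step I would regard as the crux is showing that the space of $G$-invariant real $(1,1)$-forms on $G/K$ is one-dimensional, spanned by $\omega$; this uses that $J$ lies in the commutant of the isotropy representation, which is where the K\"ahler hypothesis enters.
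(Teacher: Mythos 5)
Your main argument is exactly the paper's: the proposition is stated as an immediate consequence of El Soufi--Ilias's result that the canonical metric on a compact isotropy irreducible homogeneous space is $\lambda_{1}$-extremal for \emph{all} volume-preserving Riemannian deformations, of which the K\"ahler deformations \eqref{metrics} are a subclass, and the paper offers no further proof. Your second, direct verification of \eqref{harmonic-L} via $G$-invariance is likewise anticipated by the paper's remark that one can argue from Theorem~\ref{conclusion} following Takahashi, and it is sound.
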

We remark that one can also prove this proposition directly from Theorem \ref{conclusion}, using a similar discussion to that in \cite[Section 3]{Takahashi}. It is known that the metric of a compact isotropy irreducible homogeneous K\"{a}hler manifold is Einstein\cite{Wolf}. An irreducible Hermitian symmetric space of compact type is a compact isotropy irreducible homogeneous K\"{a}hler manifold. In fact, the converse also holds. That is, a compact isotropy irreducible homogeneous K\"{a}hler manifold is an irreducible Hermitian symmetric space of compact type \cite{WZ, Lichnerowicz}. Apotolov--Jakobson--Kokarev\cite{AJK} proved that the metric on a compact homogeneous K\"{a}her-Einstein manifold of positive scalar curvature is $\lambda_{1}$-extremal within its K\"{a}hler class. 

Before stating the corollaries of Theorem \ref{conclusion}, we recall basic facts about the first eigenvalue of the Laplacian on a product of Riemannian manifolds. Let $(M, g)$ and $(M', g')$ be Riemannian manifolds. For a function $f \in C^{\infty}(M; \mathbf{R})$, we define the function $f \times 1$ on $M \times M'$ by
\begin{equation*}
(f \times 1)(x, y) := f(x), \quad (x,y) \in M \times M'.
\end{equation*}
For a function $h \in C^{\infty}(M'; \mathbf{R})$, we define the function $1 \times h$ on $M \times M'$ in a similar manner. Suppose that $\lambda_{1}(g) \leq \lambda_{1}(g')$. Then the first eigenvalue $\lambda_{1}(g\times g')$ of the product Riemannian manifold $(M \times M', g\times g')$ is equal to $\lambda_{1}(g)$. $E_{1}(g \times g')$, the space of $\lambda_{1}(g \times g')$-eigenfunctions on $M \times M'$ is given by
\begin{equation*}
E_{1}(g \times g') =
\begin{cases}
\mbox{span}\{ f \times 1, 1 \times h \mid f \in E_{1}(g), h \in E_{1}(g') \} & (\mbox{if } \lambda_{1}(g) = \lambda_{1}(g') )\\
\mbox{span}\{ f \times 1 \mid f \in E_{1}(g) \} & (\mbox{if } \lambda_{1}(g) < \lambda_{1}(g') ).\\
\end{cases}
\end{equation*}

The following corollary follows from Theorem \ref{conclusion}: 
\begin{corr}
\label{product}
Let $(M, J, g, \omega)$ and $(M', J' , g', \omega')$ be compact K\"{a}hler manifolds. Assume that $\lambda_{1}(g) = \lambda_{1}(g')$ and that $g$ and $g'$ are both $\lambda_{1}$-extremal for all the volume-preserving deformations of the K\"{a}hler metrics. Then the product K\"{a}hler metric $g \times g'$ on $(M, J) \times (M, J)$ is $\lambda_{1}$-extremal for all the volume-preserving deformations of the K\"{a}hler metric. 
\end{corr}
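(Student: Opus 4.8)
The plan is to deduce this from Theorem~\ref{conclusion}. By hypothesis and the "only if" direction of Theorem~\ref{conclusion} in the case $k=1$, there are finite collections $\{f_{1},\dots,f_{N}\}\subset E_{1}(g)$ and $\{h_{1},\dots,h_{N'}\}\subset E_{1}(g')$ satisfying the two equations of~(\ref{harmonic-L}) on $(M,J,g,\omega)$ and on $(M',J',g',\omega')$ respectively. Since $\lambda_{1}(g)=\lambda_{1}(g')$, the basic facts recalled just before the corollary give $\lambda_{1}(g\times g')=\lambda_{1}(g)$ and $f_{j}\times 1,\,1\times h_{i}\in E_{1}(g\times g')$; also $\mathrm{Vol}(M\times M',g\times g')=1$. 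I would show that the combined collection $\{f_{1}\times 1,\dots,f_{N}\times 1,\ 1\times h_{1},\dots,1\times h_{N'}\}$ satisfies~(\ref{harmonic-L}) on the product, whose K\"ahler form is $\omega_{M\times M'}=\pi_{M}^{*}\omega+\pi_{M'}^{*}\omega'$ (with $\pi_{M},\pi_{M'}$ the projections); the "if" direction of Theorem~\ref{conclusion} then yields $\lambda_{1}$-extremality of $g\times g'$.

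First I would record that, because both the metric and the complex structure split, the operators $d$, $d^{c}$, hence $dd^{c}=2i\partial\overline{\partial}$, commute with pullback along $\pi_{M}$ and $\pi_{M'}$. In particular $dd^{c}(f_{j}\times 1)=\pi_{M}^{*}(dd^{c}f_{j})$ is a $(1,1)$-form with all legs tangent to the $M$-factor, so that
\[
\sum_{j=1}^{N}(f_{j}\times 1)\,dd^{c}(f_{j}\times 1)=\pi_{M}^{*}\Big(\sum_{j=1}^{N}f_{j}\,dd^{c}f_{j}\Big),
\]
and analogously for the $h_{i}$. Next I would establish the key compatibility of the harmonic projector with the product: for any $(1,1)$-form $\beta$ on $M$ one has $H_{M\times M'}(\pi_{M}^{*}\beta)=\pi_{M}^{*}(H_{M}\beta)$. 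Indeed, the Hodge Laplacian on a Riemannian product acts on pullbacks from a factor as $\Delta_{M}\otimes 1+1\otimes\Delta_{M'}$, so $\pi_{M}^{*}(H_{M}\beta)$ is harmonic; and $\pi_{M}^{*}(\beta-H_{M}\beta)$ is $L^{2}(g\times g')$-orthogonal to $\mathcal{H}^{1,1}(M\times M')$, which by the K\"unneth decomposition is spanned by exterior products $\pi_{M}^{*}\eta\wedge\pi_{M'}^{*}\eta'$ of harmonic forms of complementary bidegree: against a "pure-$M$" generator ($\eta\in\mathcal{H}^{1,1}(M)$, $\eta'$ constant) the pairing factors as $\mathrm{Vol}(M',g')\cdot\langle\beta-H_{M}\beta,\eta\rangle_{L^{2}(g)}=0$, while against the mixed generators (from $H^{1}(M)\otimes H^{1}(M')$) and the "pure-$M'$" generators it vanishes pointwise since $\pi_{M}^{*}(\beta-H_{M}\beta)$ has no leg along $M'$. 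Applying this with $\beta=\sum_{j}f_{j}\,dd^{c}f_{j}$ (so $H_{M}\beta=-\omega$), the analogous computation on $M'$, and adding, I would obtain
\[
H_{M\times M'}\Big(\sum_{j}(f_{j}\times 1)dd^{c}(f_{j}\times 1)+\sum_{i}(1\times h_{i})dd^{c}(1\times h_{i})\Big)=-\pi_{M}^{*}\omega-\pi_{M'}^{*}\omega'=-\omega_{M\times M'},
\]
the first equation of~(\ref{harmonic-L}).

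For the second equation I would use the Apostolov--Jakobson--Kokarev identity $L(\phi)=\lambda^{2}\phi^{2}-2\lambda|\nabla\phi|^{2}+|dd^{c}\phi|^{2}$, valid for every $\lambda$-eigenfunction $\phi$ on a compact K\"ahler manifold, applied on the product to $\phi=f_{j}\times 1$, which is an eigenfunction for $\lambda_{1}(g\times g')=\lambda_{1}(g)$. Since gradients and pointwise norms of forms on a Riemannian product restrict to each factor, $(f_{j}\times 1)^{2}=f_{j}^{2}\times 1$, $|\nabla(f_{j}\times 1)|^{2}=|\nabla f_{j}|^{2}\times 1$ and $|dd^{c}(f_{j}\times 1)|^{2}=|dd^{c}f_{j}|^{2}\times 1$; comparing with the same identity on $M$ gives $L(f_{j}\times 1)=(L(f_{j}))\times 1$, so $\sum_{j}L(f_{j}\times 1)=\big(\sum_{j}L(f_{j})\big)\times 1=0$ by~(\ref{harmonic-L}) on $M$, and likewise $\sum_{i}L(1\times h_{i})=0$. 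Adding these and re-expressing the right-hand side of the second equation of~(\ref{harmonic-L}) on the product through the same eigenfunction identity completes the verification, and Theorem~\ref{conclusion} then gives that $g\times g'$ is $\lambda_{1}$-extremal.

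The step I expect to be the main obstacle is the behavior of the harmonic projector under the product, specifically checking that $\pi_{M}^{*}(\beta-H_{M}\beta)$ is orthogonal to all of $\mathcal{H}^{1,1}(M\times M')$ — including the mixed K\"unneth classes in $H^{1}(M)\otimes H^{1}(M')$ that are pulled back from neither factor; this requires a careful use of the orthogonal splitting $T^{*}(M\times M')=\pi_{M}^{*}T^{*}M\oplus\pi_{M'}^{*}T^{*}M'$ at the level of pointwise norms on $(1,1)$-forms. Everything else is a direct consequence of the product structure together with the characterization in Theorem~\ref{conclusion}.
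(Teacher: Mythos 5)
Your proof is correct and follows essentially the same route as the paper: verify that the combined family $\{f_j\times 1\}\cup\{1\times h_i\}$ satisfies the two equations of (\ref{harmonic-L}) on the product and invoke the characterization in Theorem \ref{conclusion}. The only difference is that you carefully justify the commutation $H_{M\times M'}(\pi_M^*\beta)=\pi_M^*(H_M\beta)$ via the K\"unneth decomposition and pointwise orthogonality of mixed-type forms, a step the paper simply asserts.
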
   

\begin{proof}
By hypothesis, there exist finite subsets $\{f_{j}\} \subset E_{1}(g)$ and $\{h_{k} \} \subset E_{1}(g')$ such that
\begin{equation*}
\left\{ \,
\begin{aligned}
&\sum_{j}H_{M} \left( f_{j}dd^{c}f_{j} \right) = -\omega, \quad \sum_{j}L_{M}(f_{j}) = 0 \\
&\sum_{k}H_{M'} \left(  h_{k}dd^{c}h_{k} \right) = -\omega', \quad \sum_{k}L_{M'}(h_{k}) = 0. \\
\end{aligned}
\right.
\end{equation*}
Then one has 
\begin{equation*}
\begin{split}
& \quad \sum_{j}L_{M\times M'} (f_{j} \times 1) + \sum_{k} L_{M\times M'} (1 \times h_{k}) \\
&= \sum_{j} L_{M}(f_{j}dd^{c}f_{j}) + \sum_{k} L_{M'}(h_{k}) \\
&=0.
\end{split}
\end{equation*}
We also have
\begin{equation*}
\begin{split}
&\quad \sum_{j}H_{M\times M'} \left( (f_{j}\times 1)d_{M\times M'}d^{c}_{M\times M'}(f_{j}\times 1) \right) \\
&= \sum_{j} H_{M\times M'} (f_{j}d_{M}d^{c}_{M}f_{j} \oplus 0) \\
&= \sum_{j} H_{M}(f_{j}d_{M}d^{c}_{M}f_{j}) \oplus 0 \\
& = -\omega \oplus 0. \\
\end{split}
\end{equation*}
Similarly, we obtain
\begin{equation*}
\sum_{k} H_{M\times M'} \left( (1\times h_{k} )d_{M\times M'}d^{c}_{M\times M'}(1 \times h_{k} ) \right) = 0 \oplus -\omega'.
\end{equation*}
Hence we obtain
\begin{equation*}
\sum_{j} H\left( (f_{j}\times 1)dd^{c}(f_{j}\times 1) \right) + \sum_{k} H \left( (1\times h_{k} )dd^{c}(1 \times h_{k} ) \right) = -\omega \oplus -\omega', 
\end{equation*}
where we omit the subscript $M \times M'$. Thus one concludes that $\{ f_{j} \times 1 \}_{j} \cup \{1 \times h_{k} \}_{k}$ satisfy the equation $(\ref{harmonic-L})$. The proof is completed.
\end{proof}
From the above proof, one can immediately obtain the following corollary:
 
 \begin{corr}
 \label{NOT}
 Let $(M, J, g, \omega)$ and $(M', J' , g', \omega')$ be compact K\"{a}hler manifolds. Assume that $\lambda_{1}(g) \neq \lambda_{1}(g')$. Then the product K\"{a}hler metric $g \times g'$ on $(M, J) \times (M', J')$ is not $\lambda_{1}$-extremal for all the volume-preserving deformations of the K\"{a}hler metric. 
 \end{corr}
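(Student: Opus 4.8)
The plan is to argue by contradiction, invoking the necessity half of Theorem \ref{conclusion} for $k=1$ together with the description of $E_1(g\times g')$ recalled just above. Since the hypothesis is symmetric in $g$ and $g'$, I may assume $\lambda_1(g)<\lambda_1(g')$, so that $\lambda_1(g\times g')=\lambda_1(g)$ and every element of $E_1(g\times g')$ is of the form $\phi\times 1$ with $\phi\in E_1(g)$; that is, every $\lambda_1$-eigenfunction of the product is pulled back from the first factor. If $g\times g'$ were $\lambda_1$-extremal, Theorem \ref{conclusion} would give a finite collection $\{u_1,\dots,u_N\}\subset E_1(g\times g')$ satisfying $(\ref{harmonic-L})$; write $u_i=\phi_i\times 1$ with $\phi_i\in E_1(g)$. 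In fact only the first equation of $(\ref{harmonic-L})$ will be needed.

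Next I would evaluate the harmonic projection in that first equation exactly as in the proof of Corollary \ref{product}. Since $d_{M\times M'}d^c_{M\times M'}(\phi_i\times 1)=(d_M d^c_M\phi_i)\oplus 0$, the $(1,1)$-form $(\phi_i\times 1)\,dd^c(\phi_i\times 1)=(\phi_i\,d_M d^c_M\phi_i)\oplus 0$ is pulled back from $M$, hence $H_{M\times M'}\big((\phi_i\times 1)\,dd^c(\phi_i\times 1)\big)=H_M(\phi_i\,d_M d^c_M\phi_i)\oplus 0$. Summing over $i$ and using that the K\"ahler form of the product is $\omega\oplus\omega'$, the first equation of $(\ref{harmonic-L})$ reads
\[
\Big(\textstyle\sum_{i=1}^N H_M(\phi_i\,d_M d^c_M\phi_i)\Big)\oplus 0=-(\omega\oplus\omega').
\]
The left-hand side is pulled back from $M$ via $\pi_M$, hence restricts to $0$ on each fibre $\{x\}\times M'$ (where $\pi_M$ is constant), whereas the right-hand side restricts to $-\omega'$ there. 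As $\omega'$ is a K\"ahler form, $\omega'\neq 0$, which is a contradiction; therefore $g\times g'$ is not $\lambda_1$-extremal.

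I do not anticipate a genuine obstacle: the statement drops out of Corollary \ref{product} with almost no extra work. The two facts that must be in hand — and both are already used in the proof of Corollary \ref{product} — are the identification of $E_1(g\times g')$ when $\lambda_1(g)\neq\lambda_1(g')$, and the compatibility of the harmonic projection with pullback from a factor for the product metric (because $\pi_M^*$ carries harmonic forms to harmonic forms and exact forms to exact forms). With these, the proof is the short bookkeeping above.
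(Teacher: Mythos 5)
Your proof is correct and is exactly the argument the paper intends: the paper gives no separate proof of Corollary \ref{NOT}, stating only that it follows "from the above proof" of Corollary \ref{product}, and your restriction-to-a-fibre contradiction (every first eigenfunction of the product is pulled back from the factor with smaller $\lambda_{1}$, so the harmonic projection in the first equation of (\ref{harmonic-L}) has vanishing $M'$-component, while $-\omega'\neq 0$) is precisely the intended bookkeeping. No gaps.
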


The above corollaries also correspond to the Corollary 2.3 in \cite{AJK}, which we quote in the following:
\begin{prop}[\cite{AJK}]
\label{AJKproduct}
Let $(M, J, g, \omega)$ and $(M', J' , g', \omega')$ be compact K\"{a}hler manifolds. Suppose that $g$ is $\lambda_{1}$-extremal for all the deformations of the K\"{a}hler metric in its K\"{a}hler class and that the inequality $\lambda_{1}(g) \leq \lambda_{1}(g')$ holds. Then the product metric $g \times g'$ is $\lambda_{1}$-extremal for all the deformations of the K\"{a}hler metric in its K\"{a}hler class on $(M, J) \times (M', J')$.
\end{prop}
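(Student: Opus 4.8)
The plan is to reduce everything to the within-K\"ahler-class characterization of Apostolov--Jakobson--Kokarev, namely Theorem \ref{AJKtheo}: a K\"ahler metric $h$ is $\lambda_{1}$-extremal for all deformations in its K\"ahler class if and only if there is a finite collection $\{\phi_{i}\} \subset E_{1}(h)$ with $\sum_{i} L(\phi_{i}) = 0$. Thus it suffices to manufacture such a collection for $g \times g'$ out of one for $g$.

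First, since $\lambda_{1}(g) \leq \lambda_{1}(g')$, the product-eigenvalue facts recalled just before Corollary \ref{product} give $\lambda_{1}(g \times g') = \lambda_{1}(g)$ and $f \times 1 \in E_{1}(g \times g')$ for every $f \in E_{1}(g)$. Next, by hypothesis and Theorem \ref{AJKtheo} applied to $g$, choose a finite subset $\{f_{j}\} \subset E_{1}(g)$ with $\sum_{j} L_{M}(f_{j}) = 0$. The key point is that $L$ is compatible with the K\"ahler product, in the sense that $L_{M \times M'}(f \times 1) = L_{M}(f) \times 1$ for $f \in C^{\infty}(M;\mathbf{R})$: indeed $d_{M \times M'}(f \times 1) = (d_{M}f) \times 1$ and $d^{c}_{M \times M'}(f \times 1) = (d^{c}_{M}f) \times 1$, because $\partial, \overline{\partial}$ split along the product and their $M'$-components annihilate functions pulled back from $M$; hence $(f \times 1)\,dd^{c}(f \times 1)$ is the pullback of the $(1,1)$-form $f\,dd^{c}_{M}f$, with all legs in the $M$-directions. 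Since the inverse of the product metric (and hence the Hodge star) is block diagonal, the codifferentials $\delta_{M \times M'}$ and $\delta^{c}_{M \times M'}$ carry pullbacks of forms from $M$ to pullbacks of their $\delta_{M}$-, $\delta^{c}_{M}$-images. Applying this to $L(f) = \delta^{c}\delta(f\,dd^{c}f)$ yields the claimed identity; this is precisely the $L$-part of the computation already carried out in the proof of Corollary \ref{product}.

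Putting these together, $\{f_{j} \times 1\} \subset E_{1}(g \times g')$ satisfies $\sum_{j} L_{M \times M'}(f_{j} \times 1) = \bigl(\sum_{j} L_{M}(f_{j})\bigr) \times 1 = 0$, so Theorem \ref{AJKtheo} gives that $g \times g'$ is $\lambda_{1}$-extremal for all deformations of the K\"ahler metric in its K\"ahler class. Note that when $\lambda_{1}(g) = \lambda_{1}(g')$ one could also adjoin terms $1 \times h$ with $h \in E_{1}(g')$, but this is unnecessary: the collection $\{f_{j} \times 1\}$ already works, and no extremality hypothesis on $g'$ is needed.

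The only step requiring genuine care is the product formula $L_{M \times M'}(f \times 1) = L_{M}(f) \times 1$ --- concretely, verifying that $d^{c}$ and the adjoints $\delta, \delta^{c}$ respect the splitting of the product into $TM$- and $TM'$-directions. This is routine once everything is written in terms of $\partial, \overline{\partial}$ and their adjoints on the Hermitian product and one uses that the product metric is block diagonal; there is no analytic obstacle here, only bookkeeping.
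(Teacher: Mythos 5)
Your argument is correct. Note that the paper does not actually prove Proposition \ref{AJKproduct}; it is quoted from \cite{AJK} without proof. Your route --- applying the $k=1$ equivalence in Theorem \ref{AJKtheo} to a collection $\{f_j\}\subset E_1(g)$ with $\sum_j L_M(f_j)=0$ and pushing it to the product via $L_{M\times M'}(f\times 1)=L_M(f)\times 1$ --- is the natural one, and the product identity for $L$ you rely on is exactly the computation the paper itself performs (and takes for granted) in its proof of Corollary \ref{product}; your observation that no hypothesis on $g'$ beyond $\lambda_1(g)\leq\lambda_1(g')$ is needed is also right, and is precisely what distinguishes this within-class statement from Corollary \ref{product}, where the harmonic-projector condition forces both factors to contribute.
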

We can see that $\lambda_{1}$-extremality in Corollary \ref{product} is stronger than that in Proposition \ref{AJKproduct}. To clarify the difference, let us look at the following example:
\begin{exam}
\label{sym}
Let $(M, J, g, \omega)$, $(M', J', g', \omega')$ be irreducible Hermitian symmetric spaces of compact type with $\rho = c \: \omega$ and $\rho ' = c'\omega '$ for some $c, c'>0$, where $\rho$ and $\rho '$ are the Ricci forms on $M$ and $M'$ respectively. By Example \ref{irreducible}, $g$ is $\lambda_{1}$-extremal for all the volume-preserving deformations of the K\"{a}hler metric and so is $g'$. The result due to Nagano \cite{Nagano} shows that $\lambda_{1}(g) = 2c$ and $\lambda_{1}(g') =2c'$ (see also \cite{TK}). By Proposition \ref{AJKproduct}, the product K\"{a}hler metric $g\times g'$ on $(M, J) \times (M', J')$ is $\lambda_{1}$-extremal within its K\"{a}hler class. However, Corollary \ref{product} and \ref{NOT} imply that the metric $g\times g'$ is $\lambda_{1}$-extremal for all the volume-preserving deformations of the K\"{a}hler metric if and only if $c=c'$. 
\end{exam}
The simplest case of this example is the following:
\begin{exam}
\label{illustrate}
Let $g_{FS}$ be the Fubini-Study metric on the complex projective space $\mathbf{C}P^{n}$. Take any $c>0$. Then the product K\"{a}hler metric $g_{FS} \times c g_{FS}$ on $\mathbf{C}P^{n} \times \mathbf{C}P^{n}$ is $\lambda_{1}$-extremal in its K\"{a}hler class. However, $g_{FS} \times c g_{FS}$ is $\lambda_{1}$-extremal for all the volume-preserving deformations of the K\"{a}hler metric if and only if $c=1$.
\end{exam}


El Soufi-Ilias \cite{EI1} showed that if a metric $g$ of a Riemannian manifold $(M,g)$ is $\lambda_{k}$-extremal for volume-preserving conformal deformations, then there exists a finite subset $\{f_{1}, \cdots , f_{N} \} \subset E_{k}(g)$ such that $F :=(f_{1}, \cdots f_{N}): M \rightarrow \mathbf{R}^{N}$ is a harmonic map to a sphere with constant energy density. For a Riemann surface $(M, J, g, \omega)$, conformal deformations and deformations within the K\"{a}hler class are equivalent, so it is natural to expect the following result due to Apostolov-Jakobson-Kokarev \cite{AJK}:
\begin{prop}[\cite{AJK}]
Let $(M, J, g, \omega)$ be a compact Riemann surface without boundary. Let $\{f_{1}, \cdots , f_{N} \} \subset E_{k}(g)$ be eigenfunctions associated with $\lambda_{k}(g)$. The following two conditions are equivalent:
\begin{enumerate}
\item $F :=(f_{1}, \cdots f_{N}): M \rightarrow \mathbf{R}^{N}$ gives a harmonic map to $S^{N-1}(c)$ with $|dF|^{2} = c\lambda_{k}(g)$ for some $c>0$, where $S^{N-1}(c)$ is the unit sphere of radius $c$. \\
\item $\sum_{j=1}^{N} L(f_{j}) = 0.$ \\
\end{enumerate}
\end{prop}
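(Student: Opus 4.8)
The plan is to prove the equivalence by a direct computation, working on a Riemann surface where $n=1$ so that $(1,1)$-forms are top-degree and the Hodge operators simplify dramatically. On a compact Riemann surface, a $(1,1)$-form is of the form $u\,\omega$ for a function $u$, and in particular $dd^c f = -(\Delta_g f)\,\omega$ by $(\ref{Laplacian-star})$ with $n=1$; hence for $f \in E_k(g)$ we have $dd^c f = -\lambda_k(g) f\,\omega$. The first step is to rewrite the map $F = (f_1,\dots,f_N)$ and the two conditions in these terms. Condition (1) says that $\sum_j f_j^2 = c$ is constant (the sphere condition) and that $F$ is harmonic into $S^{N-1}(c)$, which by the standard characterization (Takahashi-type) means $\Delta_g F = \frac{|dF|^2}{c} F$, i.e. $\lambda_k(g) f_j = \frac{|dF|^2}{c} f_j$ for all $j$, forcing $|dF|^2 = c\lambda_k(g)$ to be constant. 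So condition (1) is equivalent to: $\sum_j f_j^2$ is a positive constant.

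Next I would compute $\sum_j L(f_j)$ on a surface. Using the Apostolov--Jakobson--Kokarev identity $L(f) = \lambda_k(g)^2 f^2 - 2\lambda_k(g)|\nabla f|^2 + |dd^c f|^2$ together with $|dd^c f|^2 = g(dd^c f, dd^c f) = \lambda_k(g)^2 f^2 |\omega|^2 = \lambda_k(g)^2 f^2$ (since $|\omega|^2 = n = 1$ by $(\ref{omega-norm})$), the identity collapses to $L(f) = 2\lambda_k(g)^2 f^2 - 2\lambda_k(g)|\nabla f|^2$. Summing over $j$ and using $\sum_j |\nabla f_j|^2 = \tfrac12 \Delta_g(\sum_j f_j^2) + \lambda_k(g)\sum_j f_j^2$ (the Bochner-type identity $\Delta_g(f^2) = 2f\Delta_g f - 2|\nabla f|^2 = 2\lambda_k(g)f^2 - 2|\nabla f|^2$, summed), one gets
\begin{equation*}
\sum_{j=1}^N L(f_j) = -\lambda_k(g)\,\Delta_g\!\left(\sum_{j=1}^N f_j^2\right).
\end{equation*}
Since $\lambda_k(g) > 0$ and $\Delta_g u = 0$ on a compact manifold iff $u$ is constant, this shows $\sum_j L(f_j) = 0$ if and only if $\sum_j f_j^2$ is constant. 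Moreover $\sum_j f_j^2$ cannot be identically zero (as the $f_j$ are eigenfunctions, not all zero), so when it is constant it is a positive constant, which is exactly condition (1) as reformulated above.

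Combining the two steps gives the chain: $(2) \iff \sum_j f_j^2$ is a positive constant $\iff (1)$. The main obstacle — really the only nontrivial point — is the clean reformulation of condition (1): I need to invoke the fact that a map to a sphere $S^{N-1}(c)$ given by eigenfunctions is automatically harmonic with energy density $c\lambda_k(g)$ once the image lies on the sphere, i.e. the sphere condition alone forces harmonicity here. This is the surface-specific version of the Takahashi theorem and should be stated carefully; once it is in hand, everything else is the bookkeeping above, all of which is routine given the $n=1$ simplifications of $(\ref{omega-norm})$, $(\ref{Laplacian})$ and $(\ref{Laplacian-star})$ and the AJK formula for $L$.
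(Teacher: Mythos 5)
Your proposal is correct and follows essentially the same route the paper sketches: reduce condition (1) via the Takahashi-type characterization to ``$\sum_j f_j^2$ is a positive constant,'' then use $dd^c f=-\lambda_k(g)f\,\omega$ and $|\omega|^2=1$ to collapse the AJK identity for $L$ to $\sum_j L(f_j)=2\lambda_k(g)\bigl(\lambda_k(g)\sum_j f_j^2-\sum_j|\nabla f_j|^2\bigr)=\lambda_k(g)\,\Delta_g\bigl(\sum_j f_j^2\bigr)$. Note one harmless sign slip: from $\Delta_g(f^2)=2\lambda_k(g)f^2-2|\nabla f|^2$ one gets $\sum_j|\nabla f_j|^2=\lambda_k(g)\sum_j f_j^2-\tfrac12\Delta_g\bigl(\sum_j f_j^2\bigr)$, so the final identity carries a $+$ rather than a $-$; this does not affect the equivalence with the vanishing of $\sum_j L(f_j)$.
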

To prove the result, they used the identity $dd^{c}\psi = - (\Delta_{g} \psi) \omega$ and showed that the equation
\begin{equation*}
\sum_{j} L(f_{j}) =2\lambda_{k}(g) \left(\lambda_{k}(g)\sum_{j}f_{j}^{2}-\sum_{j}|\nabla f_{j}|^{2}\right)= \lambda_{k}(g) \Delta_{g}\left(\sum_{j}f_{j}^{2}\right)
\end{equation*}
holds for any finite subset $\{f_{j} \} \subset E_{k}(g)$. Since we have
\begin{equation*}
H\left( \sum_{j}f_{j}dd^{c}f_{j} \right) = \lambda_{k}(g)H\left( (\sum_{j} f_{j}^{2}) \omega \right),
\end{equation*}
one can see that if $\{f_{j}\} \subset E_{k}(g)$ satisfies $\sum_{j}L(f_{j})=0$, then the equation $H(\sum_{j}f_{j}dd^{c}f_{j}) = -\omega$ holds after an appropriate rescaling of $\{f_{j}\}$. Hence Theorem \ref{conclusion} and Theorem \ref{AJKtheo} are equivalent for a Riemann surface. This is also a natural result since deformations within the K\"{a}hler class and general K\"{a}hler deformations are equivalent for a compact Riemann surface.

\section{Complex Tori}
In view of Theorem $\ref{conclusion}$, the harmonic projector $H$ and information of the space of eigenfunctions are important. However, it is hard to find an explicit formula for the harmonic projector $H$ on a general K\"{a}hler manifold. It is also hard to determine the space of eigenfunctions explicitly on a general Riemannian manifold. However, $H$ and the eigenfunctions can be written explicitly for a complex torus. Since it is hard to compute $(\ref{harmonic-L})$ for arbitrary collection of eigenfunctions, we compute it for the $L^{2}$-orthonormal basis of the space of eigenfunctions. From the calculations, we see that the metric on any flat complex torus is $\lambda_{1}$-extremal within its K\"{a}hler class. In addition, we deduce a condition for the flat metric to be $\lambda_{1}$-extremal for all the volume-preserving deformations of the K\"{a}hler metric. 

Let $\gamma_{1}, \cdots, \gamma_{2n}$ be vectors in $\mathbf{C}^{n}$ that are linearly independent over $\mathbf{R}$. We denote by $\Gamma$ the lattice in $\mathbf{C}^{n}$ with basis $\{ \gamma_{1}, \cdots, \gamma_{2n} \}$. Let $z_{1}, \cdots, z_{n}$ be the standard complex coordinates of $\mathbf{C}^{n}$ and $x^{1}, \cdots , x^{2n}$ the real coordinates defined by 
\begin{equation*}
z^{k} = x^{2k-1} + ix^{2k} \quad (k=1, \cdots, n).
\end{equation*}
The lattice $\Gamma$ naturally acts on $\mathbf{C}^{n}$ by translation. Then the quotient space $T_{\Gamma}^{n} := \mathbf{C}^{n}/\Gamma$ becomes a complex manifold in a natural way. $T_{\Gamma}^{n}$ is called a \textit{complex torus}. The standard metric on $\mathbf{C}^{n}$ is given by $\sum_{j=1}^{n} dz^{j} \otimes d\overline{z}^{j}$. This defines a K\"{a}hler form 
\begin{equation*}
\widetilde{\omega} := \frac{i}{2} \sum_{j=1}^{n} dz^{j} \wedge d\overline{z}^{j}
\end{equation*}
on $\mathbf{C}^{n}$. The canonical holomorphic projection $\mathbf{C}^{n} \rightarrow \mathbf{C}^{n}/\Gamma = T_{\Gamma}^{n}$ induces a K\"{a}hler metric $g$ and a K\"{a}hler form $\omega$ on $T_{\Gamma}^{n}$. If we express $w_{1}$, $w_{2} \in \mathbf{C}^{n}$ as  
\begin{equation*}
w_{k} = (w_{k}^{1}, \cdots ,w_{k}^{n}), \quad w_{k} = (u_{k}^{1}, \cdots , u_{k}^{2n}), \quad w_{k}^{j} = u_{k}^{2j-1}+iu_{k}^{2j} \quad (k=1, 2)
\end{equation*}
in the complex coordinates $(z^{1}, \cdots z^{n})$ and the real coordinates $(x^{1}, \cdots, x^{2n})$ respectively, then the standard inner product of $w_{1}$ and $w_{2}$ is given by
\begin{equation*}
\sum_{j=1}^{2n}u_{1}^{j}u_{2}^{j} = \frac{1}{2}(w_{1}\cdot \overline{w}_{2} + \overline{w}_{1}\cdot w_{2}).
\end{equation*}
We define the dual lattice of $\Gamma$, which is denoted by $\Gamma^{*}$, by
\begin{equation*}
\begin{split}
\Gamma^{*} &= \{ w \in \mathbf{C}^{n}  \mid \frac{1}{2}(v \cdot \overline{w} + \overline{v} \cdot w) \in \mathbf{Z} \quad \mbox{for all $v \in \Gamma$} \} \\
&= \{ w \in \mathbf{C}^{n}  \mid \mbox{exp}\left( \pi i (v \cdot \overline{w} + \overline{v} \cdot w) \right) = 1 \quad \mbox{for all  $v \in \Gamma$} \}. \\
\end{split}
\end{equation*}
For any $w \in \Gamma^{*}$, we define a function $\Phi_{w} : T_{\Gamma}^{n} \rightarrow \mathbf{C}$ by 
\begin{equation*}
\Phi_{w}(z) : = \mbox{exp}\left( \pi i (z \cdot \overline{w} + \overline{z} \cdot w) \right). 
\end{equation*}
Then $\Phi_{w}$ is actually a well-defined function on the complex torus $T_{\Gamma}^{n}$. It is known that $\lambda$ is an eigenvalue of the Laplacian on $(T_{\Gamma}^{n}, g)$ if and only if there exists $w \in \Gamma^{*} $ such that $\lambda = 4\pi^{2}|w|^{2}$, where $|w|^{2} = w \cdot \overline{w}$. We set
\begin{equation*}
S(\lambda) :=  \{ w \in \Gamma^{*} \mid \lambda = 4\pi^{2}|w|^{2} \}.
 \end{equation*}
 Then the multiplicity of $\lambda$ is given by $\# S(\lambda)$. For $\lambda \neq 0$, the number $\# S(\lambda)$ is an even integer and $S(\lambda)$ can be written as 
 \begin{equation*}
 S(\lambda) = \{ \pm w_{1}, \pm w_{2}, \cdots , \pm w_{l(\lambda)} \},
 \end{equation*}
 where each $w_{\nu}$ $(\nu =1, \cdots ,l(\lambda))$ is an element of $\Gamma^{*}$ with $\lambda = 4\pi^{2}|w_{\nu}|^{2}$. Set
 \begin{equation*}
 \varphi_{w}(z) := \sqrt{\frac{2}{\mbox{Vol}(T^{n}_{\Gamma})}} \mbox{Re}\left(\Phi_{w}(z)\right) = \sqrt{\frac{2}{\mbox{Vol}(T^{n}_{\Gamma})}} \cos\left(2\pi \sum_{k=1}^{2n}x^{k}u^{k} \right)  ,
 \end{equation*}
 \begin{equation*}
  \psi_{w}(z) := \sqrt{\frac{2}{\mbox{Vol}(T^{n}_{\Gamma})}} \mbox{Im}\left(\Phi_{w}(z)\right) = \sqrt{\frac{2}{\mbox{Vol}(T^{n}_{\Gamma})}} \sin\left(2\pi \sum_{k=1}^{2n}x^{k}u^{k}\right),
 \end{equation*} 
 where we use the real coordinates $z= (x^{1}, \cdots, x^{2n})$ and $w= (u^{1}, \cdots, u^{2n})$.
One can verify that $\{ \varphi_{w_{\nu}} , \psi_{w_{\nu}} \mid \nu =1, \cdots ,l(\lambda) \}$ is an $L^{2}$-orthonormal basis of the real eigenspace $E(\lambda)$. (For details of the aforementioned spectral property of flat tori,  see \cite[pp.272-273]{Sakai}, for instance.)

Apostolov-Jakobson-Kokarev\cite{AJK} proved that the metric on a compact homogeneous K\"{a}hler-Einstein manifold of positive scalar curvature is $\lambda_{1}$-extremal within its K\"{a}hler class. We show that the metric on a flat complex torus is also $\lambda_{1}$-extremal within its K\"{a}hler class.
\begin{prop}
\label{AJK-torus}
Let $(T^{n}_{\Gamma}, g)$ be a flat complex torus. Then the metric $g$ is $\lambda_{1}$-extremal within its K\"{a}hler class.
\end{prop}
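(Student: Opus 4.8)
The plan is to invoke the $k=1$ case of Theorem~\ref{AJKtheo}: it suffices to exhibit a finite collection $\{f_1,\dots,f_N\}\subset E_1(g)$ with $\sum_{j}L(f_j)=0$, where, as recalled above, $L(f)=\lambda_1(g)^2f^2-2\lambda_1(g)|\nabla f|^2+|dd^cf|^2$ on $E_1(g)$. I claim a single pair of eigenfunctions already does the job. Fix $w\in\Gamma^*$ with $\lambda_1(g)=4\pi^2|w|^2$, set $c^2=2/\mathrm{Vol}(T^n_\Gamma)$ and $\theta=2\pi\sum_k x^ku^k=\pi(z\cdot\overline{w}+\overline{z}\cdot w)$, so that the eigenfunctions $\varphi_w,\psi_w\in E_1(g)$ introduced above are $\varphi_w=c\cos\theta$ and $\psi_w=c\sin\theta$. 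I will show that $L(\varphi_w)+L(\psi_w)=0$, which, being a sum over a subset of $E_1(g)$, is all that Theorem~\ref{AJKtheo} requires.

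Two of the three terms are elementary. First, $\varphi_w^2+\psi_w^2=c^2$, so the potential term contributes $\lambda_1(g)^2c^2$. Second, working in the real coordinates $x^1,\dots,x^{2n}$ one has $\nabla\varphi_w=-2\pi c\sin\theta\,(u^1,\dots,u^{2n})$ and $\nabla\psi_w=2\pi c\cos\theta\,(u^1,\dots,u^{2n})$, hence $|\nabla\varphi_w|^2+|\nabla\psi_w|^2=4\pi^2c^2|w|^2=\lambda_1(g)c^2$, contributing $-2\lambda_1(g)^2c^2$.

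The main step is the Hessian term $|dd^cf|^2$. Since $\theta$ is $\mathbf{R}$-linear in $x$, a direct computation gives $\partial\overline{\partial}\varphi_w=-\pi^2c\cos\theta\,\beta\wedge\overline{\beta}$ with $\beta:=\sum_j\overline{w}^{\,j}\,dz^j$ a $(1,0)$-form with constant coefficients, so $dd^c\varphi_w=-2\pi^2c\cos\theta\,\gamma$ and likewise $dd^c\psi_w=-2\pi^2c\sin\theta\,\gamma$, where $\gamma:=i\,\beta\wedge\overline{\beta}$ is a fixed real $(1,1)$-form independent of $z$. Two facts pin down $|\gamma|^2$. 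On the one hand $\gamma\wedge\gamma=0$, because $\beta\wedge\beta=0$. On the other hand, comparing $\Delta_g\varphi_w=\lambda_1(g)\varphi_w$ with the identity (\ref{Laplacian}) gives $g(\omega,\gamma)=2|w|^2$. Feeding $\phi=\psi=\gamma$ into the identity $g(\phi,\psi)\omega^n=g(\phi,\omega)g(\psi,\omega)\omega^n-n(n-1)\,\phi\wedge\psi\wedge\omega^{n-2}$ recalled in the previous section then yields $|\gamma|^2=g(\omega,\gamma)^2=4|w|^4$. Consequently
\begin{equation*}
|dd^c\varphi_w|^2+|dd^c\psi_w|^2=4\pi^4c^2\,|\gamma|^2=16\pi^4|w|^4c^2=\lambda_1(g)^2c^2,
\end{equation*}
and therefore $L(\varphi_w)+L(\psi_w)=\lambda_1(g)^2c^2-2\lambda_1(g)^2c^2+\lambda_1(g)^2c^2=0$. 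By the $k=1$ part of Theorem~\ref{AJKtheo}, $g$ is $\lambda_1$-extremal within its K\"{a}hler class.

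I do not anticipate a serious obstacle; the only delicate point is keeping track of the numerical constants in $dd^c\varphi_w$ and in the induced inner product on $(1,1)$-forms. Routing the computation of $|\gamma|^2$ through (\ref{Laplacian}) and the algebraic identity (both already established in the text) removes that bookkeeping; alternatively one may check it directly by writing $\gamma=2\,\alpha_1\wedge\alpha_2$ for the mutually orthogonal real $1$-forms $\alpha_1=\sum_k u^k\,dx^k$ and $\alpha_2=\sum_j\big(u^{2j-1}\,dx^{2j}-u^{2j}\,dx^{2j-1}\big)$, each of norm $|w|$, whence $|\gamma|^2=4|w|^4$ once more.
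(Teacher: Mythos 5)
Your proof is correct and follows essentially the same route as the paper: both invoke Theorem~\ref{AJKtheo} and verify $L(\varphi_w)+L(\psi_w)=0$ by computing the gradient and $dd^c$ terms of the trigonometric eigenfunctions (the paper sums over the whole orthonormal basis, but its own computation already shows each pair vanishes individually, so your single-pair version is an inessential simplification). Your derivation of $|dd^c\varphi_w|^2$ via $g(\omega,\gamma)=2|w|^2$ and the algebraic identity is a clean alternative to the paper's direct index computation, but reaches the same constants.
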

\begin{proof}
In the proof, we use the notations introduced above. By Theorem $\ref{AJKtheo}$, it suffices to show that the $L^{2}$-orthonormal basis $\{ \varphi_{w_{\nu}} , \psi_{w_{\nu}} \mid \nu =1, \cdots ,l(\lambda) \}$ of $E_{1}(g)$ satisfy
\begin{equation}
\label{torus-class}
\sum_{\nu=1}^{l(\lambda_{1}(g) )} L(\varphi_{w_{\nu}}) + L(\psi_{w_{\nu}}) = 0.
\end{equation}
By a straightforward calculation, we have 
\begin{equation*}
|\nabla \varphi_{w_{\nu}} |^{2} = \frac{2}{\mbox{Vol}(T^{n}_{\Gamma}) } \left[ 4\pi^{2}\sum_{j=1}^{2n}(u_{\nu}^{j})^{2} \right] \sin^{2}\left(2\pi \sum_{k=1}^{2n}x^{k}u_{\nu}^{k}\right) = \lambda_{1}(g)\psi_{w}^{2}.
\end{equation*} 
Similarly, we have
\begin{equation*}
|\nabla \psi_{w_{\nu}} |^{2} = \lambda_{1}(g)\varphi_{w}^{2}.
\end{equation*}
On the other hand, it is easy to obtain
\begin{equation}
\label{ddc1}
dd^{c}\varphi_{w_{\nu}} = -2\pi^{2}i \varphi_{w_{\nu}} \sum_{\alpha, \beta=1}^{n} \overline{w}_{\nu}^{\alpha} w_{\nu}^{\beta} dz^{\alpha} \wedge d\overline{z}^{\beta}. 
\end{equation}
Hence we obtain
\begin{equation*}
\begin{split}
|dd^{c}\varphi_{w_{\nu}}|^{2} &= 4\pi^{4}\varphi_{w_{\nu}}^{2} \sum_{\alpha, \beta, \gamma, \zeta} g^{\alpha \overline{\zeta}} g^{\overline{\beta}\gamma} \overline{w}_{\nu}^{\alpha} w_{\nu}^{\beta} \overline{w}_{\nu}^{\gamma} w_{\nu}^{\zeta} \\
&= 16\pi^{4} \varphi_{w_{\nu}}^{2} \sum_{\alpha, \beta} |w_{\nu}^{\alpha}|^{2}|w_{\nu}^{\beta}|^{2} \\
&=\lambda_{1}(g)^{2}\varphi_{w_{\nu}}^{2}. \\
\end{split}
\end{equation*}
Similarly, we have 
\begin{equation}
\label{ddc2}
dd^{c}\psi_{w_{\nu}} = -2\pi^{2}i \psi_{w_{\nu}} \sum_{\alpha, \beta=1}^{n} \overline{w}_{\nu}^{\alpha} w_{\nu}^{\beta} dz^{\alpha} \wedge d\overline{z}^{\beta}. 
\end{equation}
and
\begin{equation*}
|dd^{c}\psi_{w_{\nu}}|^{2}  =\lambda_{1}(g)^{2}\psi_{w_{\nu}}^{2}.
\end{equation*}
Thus for each $\nu$, we have
\begin{equation}
\label{lplusl}
\begin{split}
&L(\varphi_{w_{\nu}}) + L(\psi_{w_{\nu}}) \\
&= \left(\lambda_{1}(g)^{2}\varphi_{w_{\nu}} - 2\lambda_{1}(g)^{2}\psi_{w_{\nu}}^{2} + \lambda_{1}(g)^{2}\varphi_{w_{\nu}}^{2} \right) + \left(\lambda_{1}(g)^{2}\psi_{w_{\nu}} - 2\lambda_{1}(g)^{2}\varphi_{w_{\nu}}^{2} + \lambda_{1}(g)^{2}\psi_{w_{\nu}}^{2} \right) \\
&=0. \\
\end{split}
\end{equation}
Thus $(\ref{torus-class})$ is proved.
\end{proof} 

The harmonic projector $H: \Omega^{1,1}(T_{\Gamma}^{n}) \rightarrow \mathcal{H}^{1,1}(T_{\Gamma}^{n})$ is given by
\begin{equation*}
H(\phi) =\frac{1}{\mbox{Vol}(T_{\Gamma}^{n})} \sum_{\alpha,\beta=1}^{n} \left(\int_{T_{\Gamma}^{n}} \phi_{\alpha \overline{\beta}} d\mu \right) dz^{\alpha} \wedge d\overline{z}^{\beta}
\end{equation*}
for a $(1,1)$-form $\phi = \sum_{\alpha, \beta=1}^{n}  \phi_{\alpha\overline{\beta}} dz^{\alpha} \wedge d\overline{z}^{\beta}$, where $d\mu$ is the volume form of $(T_{\Gamma}^{n}, g)$. (For details of the Hodge decomposition on a complex torus, see \cite[Section 1.4]{BL}.) Using this fact and Theorem \ref{conclusion}, we prove the following:
\begin{theo}
\label{characterization}
Let $(T_{\Gamma}^{n},g)$ be a flat $n$-dimensional complex torus. Let $\{w_{\nu} \}_{\nu=1}^{l(\lambda_{k}(g))}$ be linearly independent vectors in $\Gamma^{*}$ satisfying $\lambda_{k} (g)= 4\pi^{2}|w_{\nu}|^{2}$. If the flat metric $g$ is $\lambda_{k}$-extremal for all the volume-preserving deformations of the K\"{a}hler metric, then there exists $\{R_{\nu}  \geq 0\}_{\nu=1}^{l(\lambda_{k}(g))}$ such that the following equations hold:
\begin{equation}
\label{R}
\left\{ \,
\begin{aligned}
&\sum_{\nu=1}^{l(\lambda_{k}(g))} R_{\nu} \overline{w}_{\nu}^{\alpha}w_{\nu}^{\beta} = 0 \quad \mbox{for} \quad 1\leq  \alpha \neq \beta \leq n, \\
&\sum_{\nu=1}^{l(\lambda_{k}(g))} R_{\nu} |w_{\nu}^{\alpha}|^{2} = 1  \quad \mbox{for} \quad 1 \leq \alpha  \leq n.\\
\end{aligned}
\right.
\end{equation}
For $k=1$, the existence of such $\{R_{\nu}  \geq 0\}_{\nu=1}^{l(\lambda_{1}(g))}$ is also a sufficient condition for the metric $g$ to be $\lambda_{1}$-extremal for all the volume-preserving deformations of the K\"{a}hler metric.
\end{theo}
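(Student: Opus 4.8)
The plan is to deduce both implications from Theorem \ref{conclusion} by feeding into the system (\ref{harmonic-L}) the explicit $L^{2}$-orthonormal eigenbasis $\{\varphi_{w_{\nu}},\psi_{w_{\nu}}\}$ of $E_{k}(g)$ and the explicit formula for the harmonic projector $H$ on $T^{n}_{\Gamma}$. Two facts already established in the proof of Proposition \ref{AJK-torus} carry the argument: by (\ref{ddc1})--(\ref{ddc2}), both $dd^{c}\varphi_{w_{\nu}}$ and $dd^{c}\psi_{w_{\nu}}$ equal $\varphi_{w_{\nu}}$ (resp. $\psi_{w_{\nu}}$) times the \emph{single} constant-coefficient $(1,1)$-form $\Theta_{\nu}:=\sum_{\alpha,\beta}\overline{w}_{\nu}^{\alpha}w_{\nu}^{\beta}\,dz^{\alpha}\wedge d\overline{z}^{\beta}$ (which is harmonic); and $\varphi_{w_{\nu}}^{2}+\psi_{w_{\nu}}^{2}=2/\mbox{Vol}(T^{n}_{\Gamma})$ is constant, while $L(\varphi_{w_{\nu}})+L(\psi_{w_{\nu}})=0$ by (\ref{lplusl}).

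\textbf{Necessity.} Suppose $g$ is $\lambda_{k}$-extremal. By Theorem \ref{conclusion} there is a finite collection $\{f_{1},\dots,f_{N}\}\subset E_{k}(g)$ satisfying (\ref{harmonic-L}); I will use only its first equation $H\!\left(\sum_{j}f_{j}dd^{c}f_{j}\right)=-\omega$. Write $f_{j}=\sum_{\nu=1}^{l(\lambda_{k}(g))}(a_{j\nu}\varphi_{w_{\nu}}+b_{j\nu}\psi_{w_{\nu}})$. Then $dd^{c}f_{j}=-2\pi^{2}i\sum_{\mu}(a_{j\mu}\varphi_{w_{\mu}}+b_{j\mu}\psi_{w_{\mu}})\Theta_{\mu}$, so the $(\alpha\overline{\beta})$-component of $f_{j}dd^{c}f_{j}$ is a quadratic polynomial in the $\varphi_{w_{\nu}},\psi_{w_{\nu}}$. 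Applying $H$, i.e. averaging components over $T^{n}_{\Gamma}$, the $L^{2}$-orthonormality of the basis kills every cross term, in particular $\int\varphi_{w_{\nu}}\psi_{w_{\nu}}d\mu=0$ (this needs $w_{\nu}\neq0$, valid since $\lambda_{k}(g)>0$), leaving only the diagonal terms $a_{j\nu}^{2}+b_{j\nu}^{2}$:
\begin{equation*}
H\Big(\sum_{j}f_{j}dd^{c}f_{j}\Big)=-\frac{2\pi^{2}i}{\mbox{Vol}(T^{n}_{\Gamma})}\sum_{\alpha,\beta}\Big(\sum_{\nu}S_{\nu}\,\overline{w}_{\nu}^{\alpha}w_{\nu}^{\beta}\Big)dz^{\alpha}\wedge d\overline{z}^{\beta},\qquad S_{\nu}:=\sum_{j}(a_{j\nu}^{2}+b_{j\nu}^{2})\geq0.
\end{equation*}
Comparing the coefficients of $dz^{\alpha}\wedge d\overline{z}^{\beta}$ with those of $-\omega=-\tfrac{i}{2}\sum_{\alpha}dz^{\alpha}\wedge d\overline{z}^{\alpha}$ gives exactly (\ref{R}) with $R_{\nu}:=\tfrac{4\pi^{2}}{\mbox{Vol}(T^{n}_{\Gamma})}S_{\nu}\geq0$.

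\textbf{Sufficiency for $k=1$.} Conversely, given $\{R_{\nu}\geq0\}$ satisfying (\ref{R}), set $c_{\nu}:=\tfrac{\mbox{Vol}(T^{n}_{\Gamma})}{8\pi^{2}}R_{\nu}\geq0$ and take the collection $\{\sqrt{c_{\nu}}\,\varphi_{w_{\nu}},\ \sqrt{c_{\nu}}\,\psi_{w_{\nu}}\mid\nu=1,\dots,l(\lambda_{1}(g))\}\subset E_{1}(g)$. For the first equation of (\ref{harmonic-L}): using $\varphi_{w_{\nu}}^{2}+\psi_{w_{\nu}}^{2}=2/\mbox{Vol}(T^{n}_{\Gamma})$,
\begin{equation*}
c_{\nu}\varphi_{w_{\nu}}dd^{c}\varphi_{w_{\nu}}+c_{\nu}\psi_{w_{\nu}}dd^{c}\psi_{w_{\nu}}=-\frac{4\pi^{2}i\,c_{\nu}}{\mbox{Vol}(T^{n}_{\Gamma})}\,\Theta_{\nu},
\end{equation*}
which has constant coefficients and so is fixed by $H$; summing over $\nu$ and invoking (\ref{R}) produces exactly $-\omega$. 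For the second equation, since $L(tf)=t^{2}L(f)$ one has $\sum_{\nu}\big(L(\sqrt{c_{\nu}}\varphi_{w_{\nu}})+L(\sqrt{c_{\nu}}\psi_{w_{\nu}})\big)=\sum_{\nu}c_{\nu}\big(L(\varphi_{w_{\nu}})+L(\psi_{w_{\nu}})\big)=0$ by (\ref{lplusl}). Thus (\ref{harmonic-L}) holds and Theorem \ref{conclusion} yields $\lambda_{1}$-extremality.

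The only point that demands care is the averaging step in the necessity direction: one must check that $H$ annihilates all products $\varphi_{w_{\nu}}\varphi_{w_{\mu}},\varphi_{w_{\nu}}\psi_{w_{\mu}},\psi_{w_{\nu}}\psi_{w_{\mu}}$ with $\nu\neq\mu$, as well as $\varphi_{w_{\nu}}\psi_{w_{\nu}}$, so that only the coefficients $a_{j\nu}^{2}+b_{j\nu}^{2}$ remain; this is precisely the $L^{2}$-orthonormality of $\{\varphi_{w_{\nu}},\psi_{w_{\nu}}\}$, which holds because each $w_{\nu}\in S(\lambda_{k}(g))$ and in particular $w_{\nu}\neq0$. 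Everything else is a matter of keeping track of the constants ($\tfrac{i}{2}$ from $\omega$, $2\pi^{2}$ from $dd^{c}$, $1/\mbox{Vol}(T^{n}_{\Gamma})$ from $H$, and the factor $2$ coming from adding the $\varphi$- and $\psi$-contributions), and the second equation of (\ref{harmonic-L}) costs nothing beyond the computation already performed in Proposition \ref{AJK-torus}.
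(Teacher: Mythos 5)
Your proof is correct and follows essentially the same route as the paper: both directions are reduced to Theorem \ref{conclusion}, the necessity direction expands the $f_{j}$ in the orthonormal basis $\{\varphi_{w_{\nu}},\psi_{w_{\nu}}\}$ and uses orthonormality to isolate the diagonal coefficients $a_{j\nu}^{2}+b_{j\nu}^{2}$, and the sufficiency direction uses the collection $\{\sqrt{c_{\nu}}\varphi_{w_{\nu}},\sqrt{c_{\nu}}\psi_{w_{\nu}}\}$ together with (\ref{lplusl}). The only (cosmetic) difference is that you pick the scaling constant $c_{\nu}$ so that the harmonic part comes out exactly $-\omega$, whereas the paper obtains $-a\omega$ for some $a>0$ and relies on a rescaling of the eigenfunctions.
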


\begin{proof}
First we prove the fist half of the assertion. By theorem \ref{conclusion}, we must have 
\begin{equation}
\label{assumption}
H \left( \sum_{j=1}^{N} f_{j}dd^{c}f_{j} \right) = -\frac{i}{2} \sum_{\alpha=1}^{n} dz^{\alpha} \wedge d\overline{z}^{\alpha}
\end{equation}
for some finite collection of eigenfunctions $\{f_{j} \}_{j=1}^{N} \subset E_{k}(g)$. Each eigenfunction $f_{j}$ is of the form 
\begin{equation*}
f_{j}(z) = \sum_{\nu=1}^{l(\lambda_{k}(g))} a_{j\nu} \varphi_{w_{\nu}} + b_{j\nu} \psi_{w_{\nu}} \quad (a_{j\nu}, b_{j\nu} \in \mathbf{R}).
\end{equation*}
Using $(\ref{ddc1})$ and $(\ref{ddc2})$, we obtain
\begin{equation*}
\begin{split}
 f_{j}dd^{c}f_{j} &=  -2\pi^{2}i \sum_{\alpha, \beta=1}^{n} \sum_{\nu, \tau=1}^{l(\lambda_{k}(g))} \overline{w}_{\nu}^{\alpha}w_{\nu}^{\beta} [ a_{j\nu}a_{j\tau} \varphi_{w_{\nu}} \varphi_{w_{\tau}} + a_{j\nu}b_{j\tau} \varphi_{w_{\nu}} \psi_{w_{\tau}} \\
&\quad + a_{j\tau}b_{j\nu} \varphi_{w_{\tau}} \psi_{w_{\nu}} + b_{j\nu} b_{j\tau} \psi_{w_{\nu}} \psi_{w_{\tau}} ] dz^{\alpha} \wedge d\overline{z}^{\beta}. \\
\end{split}
\end{equation*}
Hence one obtains
\begin{equation*}
 H(f_{j}dd^{c}f_{j}) =  \frac{-2\pi^{2}i}{\mbox{Vol}(T_{\Gamma}^{n})}  \sum_{\alpha,\beta=1}^{n}  \sum_{\nu=1}^{l(\lambda_{k}(g))} \overline{w}_{\nu}^{\alpha}w_{\nu}^{\beta}(  a_{j\nu}^{2} + b_{j\nu}^{2} ) dz^{\alpha} \wedge d\overline{z}^{\beta}. 
\end{equation*}
Thus one obtains 
\begin{equation}
\label{tori-harmonic}
 \sum_{j=1}^{N} H (f_{j}dd^{c}f_{j}) = \frac{-2\pi^{2}i}{\mbox{Vol}(T_{\Gamma}^{n})}  \sum_{\alpha,\beta=1}^{n}  \sum_{\nu=1}^{l(\lambda_{k}(g))} \overline{w}_{\nu}^{\alpha}w_{\nu}^{\beta}(\sum_{j=1}^{N}  a_{j\nu}^{2} + b_{j\nu}^{2} ) dz^{\alpha} \wedge d\overline{z}^{\beta}. 
\end{equation}
Then the equations (\ref{assumption}) and (\ref{tori-harmonic}) imply that we must have
\begin{equation*}
\left\{ \,
\begin{aligned}
&\sum_{\nu=1}^{l(\lambda_{k}(g))} \overline{w}_{\nu}^{\alpha}w_{\nu}^{\beta}(\sum_{j=1}^{N}  a_{j\nu}^{2} + b_{j\nu}^{2} ) = 0 \quad \mbox{for} \quad 1\leq  \alpha \neq \beta \leq n, \\
&\sum_{\nu=1}^{l(\lambda_{k}(g))} |\overline{w}_{\nu}^{\alpha}|^{2}(\sum_{j=1}^{N}  a_{j\nu}^{2} + b_{j\nu}^{2} ) = \frac{ \mbox{Vol}(T_{\Gamma}^{n}) }{4\pi^{2}}  \quad \mbox{for} \quad 1 \leq \alpha \leq n.\\
\end{aligned}
\right.
\end{equation*}
Setting $R_{\nu} := 4\pi^{2}(\sum_{j=1}^{N}  a_{j\nu}^{2} + b_{j\nu}^{2} )/\mbox{Vol}(T_{\Gamma}^{n}) $, one concludes the first half of the assertion.

Next we prove the second half. We assume the existence of $\{R_{\nu}  \geq 0\}_{\nu=1}^{l(\lambda_{1}(g))}$ satisfying $(\ref{R})$. We use Theorem \ref{conclusion} to prove the proposition. $\{ \varphi_{w_{\nu}} , \psi_{w_{\nu}} \mid \nu =1, \cdots ,l(\lambda_{1}(g)) \}$ is an $L^{2}$-orthonormal basis of $E(\lambda_{1}(g))$. By $(\ref{lplusl})$, we immediately have
\begin{equation*}
L(\sqrt{R_{\nu}} \varphi_{w_{\nu}} ) + L(\sqrt{R_{\nu} }\psi_{w_{\nu}}) = 0
\end{equation*}
for each $\nu$. Hence we have
\begin{equation*}
 \sum_{\nu=1}^{l(\lambda_{1}(g)} L(\sqrt{R_{\nu}} \varphi_{w_{\nu}} ) + L(\sqrt{R_{\nu} }\psi_{w_{\nu}}) = 0.
\end{equation*}
Thus it suffices to prove
\begin{equation*}
\sum_{\nu=1}^{l(\lambda_{1}(g))} R_{\nu} \left[H(\varphi_{w_{\nu}}dd^{c}\varphi_{w_{\nu}}) + H(\psi_{w_{\nu}}dd^{c}\psi_{w_{\nu}}) \right] = -a\omega
\end{equation*}
for some $a>0$. Using the equations $(\ref{ddc1})$ and $(\ref{ddc2})$, one obtains
\begin{equation*}
\begin{split}
&\quad \sum_{\nu=1}^{l(\lambda_{1}(g))} R_{\nu}\left[ H(\varphi_{w_{\nu}}dd^{c}\varphi_{w_{\nu}}) + H(\psi_{w_{\nu}}dd^{c}\psi_{w_{\nu}}) \right] \\
&=  \frac{-2\pi^{2}i}{\mbox{Vol}(T_{\Gamma}^{n})} \sum_{\nu=1}^{l(\lambda_{1}(g))} \sum_{\alpha,\beta=1}^{n} R_{\nu}   \left(\int_{T_{\Gamma}^{n}} (\phi_{w_{\nu}}^{2}+\psi_{w_{\nu}}^{2})d\mu \right) \overline{w}_{\nu}^{\alpha} w_{\nu}^{\beta} dz^{\alpha} \wedge d\overline{z}^{\beta}\\
&=  \frac{-4\pi^{2}i}{\mbox{Vol}(T_{\Gamma}^{n})} \sum_{\nu=1}^{l(\lambda_{1}(g))}  \sum_{\alpha,\beta=1}^{n} R_{\nu} \overline{w}_{\nu}^{\alpha} w_{\nu}^{\beta} dz^{\alpha} \wedge d\overline{z}^{\beta}.\\
\end{split}
\end{equation*}
By hypothesis, the proof is completed.
\end{proof}
The implication of this theorem is not clear, so we consider simple cases in what follows. First we consider the case where $\mbox{dim}E_{k}(g) =2$, that is, $l(\lambda_{k}(g)) =1$. Then we have the following corollary:

\begin{corr}
\label{zeros}
Let $(T_{\Gamma}^{n},g)$ be a flat $n$-dimensional complex torus. Suppose that $\mbox{dim}E_{k}(g) =2$ holds for some $k$. Then the metric $g$ is not $\lambda_{k}$-extremal.
\end{corr}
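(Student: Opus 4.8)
The plan is to derive the conclusion directly from the necessary condition supplied by the first half of Theorem \ref{characterization}, with no additional analysis. Since $\dim E_{k}(g)=2$ is exactly the statement $l(\lambda_{k}(g))=1$, I would argue by contradiction: suppose $g$ is $\lambda_{k}$-extremal for all volume-preserving deformations of the K\"ahler metric. Then Theorem \ref{characterization} furnishes a \emph{single} constant $R_{1}\geq 0$ together with a vector $w_{1}=(w_{1}^{1},\dots,w_{1}^{n})\in\Gamma^{*}$ with $4\pi^{2}|w_{1}|^{2}=\lambda_{k}(g)$ for which the system (\ref{R}) degenerates to
\begin{equation*}
R_{1}\,\overline{w}_{1}^{\alpha}w_{1}^{\beta}=0 \quad (1\leq\alpha\neq\beta\leq n), \qquad R_{1}\,|w_{1}^{\alpha}|^{2}=1 \quad (1\leq\alpha\leq n).
\end{equation*}

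The key step is that the second family of equations is already highly constraining. It forces $R_{1}\neq 0$, and then $|w_{1}^{\alpha}|^{2}=R_{1}^{-1}>0$ for \emph{every} index $\alpha$, so each coordinate $w_{1}^{\alpha}$ of $w_{1}$ is nonzero. Hence, for any pair $\alpha\neq\beta$, the quantity $R_{1}\,\overline{w}_{1}^{\alpha}w_{1}^{\beta}$ is a product of nonzero complex numbers and therefore nonzero, contradicting the first family of equations. Thus no admissible $R_{1}$ exists, and $g$ is not $\lambda_{k}$-extremal.

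The only place where the argument could a priori break down — and the point I would flag as needing care — is that the off-diagonal part of (\ref{R}) be non-vacuous, that is, $n\geq 2$; this is precisely the case of interest. (For $n=1$ the system (\ref{R}) reduces to the single solvable equation $R_{1}|w_{1}|^{2}=1$, and indeed a one-dimensional flat torus is always $\lambda_{1}$-extremal for all volume-preserving K\"ahler deformations, in agreement with Proposition \ref{AJK-torus}, since on a Riemann surface those deformations exhaust the ones inside the K\"ahler class.) Apart from this observation, the proof is a one-line computation: all the substantive content has been absorbed into Theorem \ref{characterization}, so there is no genuine obstacle.
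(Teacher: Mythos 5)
Your proof is correct and follows essentially the same route as the paper's: both argue by contradiction from the incompatibility of the two families of equations in (\ref{R}) when $l(\lambda_{k}(g))=1$, except that you use the diagonal equations first (forcing $R_{1}>0$ and every $w_{1}^{\alpha}\neq 0$) and then contradict the off-diagonal ones, whereas the paper first extracts from the off-diagonal equations, via a real-coordinate case analysis that amounts to $\mathbf{C}$ having no zero divisors, that $w$ has a single nonzero component and then contradicts the diagonal equations. Your order of argument is a genuine shortcut, and your explicit remark that the contradiction requires $n\geq 2$ (the off-diagonal part of (\ref{R}) being non-vacuous) correctly identifies a hypothesis the paper's proof uses but leaves implicit.
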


\begin{proof}
We prove the assertion by contradiction. Assume that the metric $g$ is $\lambda_{k}$-extremal. By hypothesis, there exists a pair $w$, $-w \in \Gamma^{*} \subset \mathbf{C}^{n}$ uniquely up to sign such that  $\lambda_{k}(g) = 4\pi^{2}|w|^{2}$. First we show that the vector $w$ is of the form $w = (0, \cdots , \xi , \cdots , 0)$ for some $\xi \in \mathbf{C}$. The first equation in $(\ref{R})$ implies $\overline{w}^{\alpha}w^{\beta}=0$ for any pair of distinct integers $(\alpha,\beta)$. Since $w = (w^{1}, \cdots , w^{n})$ is a nonzero vector, we have $w^{\alpha} \neq 0$ for some $\alpha$.  Let $w^{j} = u^{2j-1}+iu^{2j}$ for each $1 \leq j \leq n$. Then for any $\beta \neq \alpha$, we have 
\begin{equation}
\label{Left}
u^{2\alpha-1}u^{2\beta-1} = -u^{2\alpha}u^{2\beta} 
\end{equation}
 and
 \begin{equation}
 \label{Right}
 u^{2\alpha}u^{2\beta-1} = u^{2\alpha-1} u^{2\beta}.
\end{equation}
Assume that $u^{2\alpha} \neq 0$ and $u^{2\beta} \neq 0$. Then by $(\ref{Right})$, there exists $c \in \mathbf{R}$ such that $u^{2\alpha-1} = cu^{2\alpha}$ and $u^{2\beta-1} = cu^{2\beta}$. Substituting these for $(\ref{Left})$, one obtains
\begin{equation*}
c^{2}u^{2\alpha}u^{2\beta} = -u^{2\alpha}u^{2\beta}.
\end{equation*}
This is a contradiction and so we have $u^{2\alpha}=0$ or $u^{2\beta}=0$. 

If we have $u^{2\alpha}=0$, then we must have $u^{2\alpha-1} \neq 0$ since we now assume $w^{\alpha} \neq 0$. Hence by $(\ref{Right})$, we have $u^{2\beta} = 0$. Then $(\ref{Left})$ immediately implies $u^{2\beta-1} = 0$. Thus we have $u^{2\beta-1} = u^{2\beta} = 0$, that is $w^{\beta}=0$.

If we have $u^{2\beta}=0$, then $(\ref{Right})$ implies that we have $u^{2\alpha}=0$ or $u^{2\beta-1}=0$. We have already considered the case where $u^{2\alpha}=0$. Hence we consider the case where $u^{2\beta-1}=0$, but this immediately implies $w^{\beta}=0$.

Thus we conclude that $w$ is of the form $w = (0, \cdots , \xi , \cdots , 0)$. However, this contradicts with the second equation in $(\ref{R})$. The proof is completed.
\end{proof}

\begin{exam}
\label{standard}
\underline{The standard lattice: $\Gamma = \mathbf{Z}^{2n}$}. Consider the standard complex torus $\textbf{C}^{n}/\mathbf{Z}^{2n}$ with the flat metric $g$. Let $\{e_{j} \}_{j=1}^{n}$ be the standard orthonormal basis of $\textbf{C}^{n}$. Set
\begin{equation*}
w_{2k-1} := e_{k},  \quad  w_{2k} := ie_{k}
\end{equation*}
for every $1 \leq k \leq n$. Then we have $S(\lambda_{1}(g)) = \{\pm w_{\nu} \}_{\nu=1}^{2n}$ and $l(\lambda_{1}(g)) = 2n$. It is clear that $(\ref{R})$ is equivalent to the condition where $R_{2k-1}+R_{2k} =1$ for any $1\leq k \leq n$ and so the torus $\textbf{C}^{n}/\mathbf{Z}^{2n}$ satisfies the assumption of Proposition \ref{characterization}. Hence the metric $g$ is $\lambda_{1}$-extremal for all the volume-preserving deformations of the K\"{a}hler metric. This fact is not new since the metric is $\lambda_{1}$-extremal for all the volume-preserving metric deformations. This can be seen from Theorem \ref{Nad-intro} and the classical fact that the standard torus admits an isometric minimal immersion into a unit sphere by first eigenfunctions as follows:
\begin{equation*}
\begin{split}
&\mathbf{C}^{n}/\mathbf{Z}^{2n}  \rightarrow S^{4n-1}\left(\sqrt{\frac{n}{2\pi^{2}} }\right), \\
& (x^{1}, \ldots, x^{2n}) \mapsto \left(\frac{1}{2\pi}\cos(2\pi x^{1}), \frac{1}{2\pi}\sin(2\pi x^{1}), \ldots, \frac{1}{2\pi}\cos(2\pi x^{2n}), \frac{1}{2\pi}\sin(2\pi x^{2n}) \right). \\
\end{split}
\end{equation*}
\end{exam}

\begin{exam}
\label{checkerboard4}
\underline{The checkerboard lattice}. First we consider the (real) $4$-dimensional checkerboard lattice $D_{4}$, which is defined by
\begin{equation*}
D_{4} := \{ (x^{1}, \ldots , x^{4}) \in \mathbf{Z}^{4} \mid x^{1} + \cdots +x^{4} \in 2\mathbf{Z} \}.
\end{equation*} 
$D_{4}$ is self-dual, i.e. $D_{4} \cong D^{*}_{4}$. The dual lattice $D_{4}^{*} (\cong D_{4})$ is known to be the lattice in $\mathbf{C}^{2}$ with the basis $(1,0)$, $(0,1)$, $(i,0)$, $\left( \frac{1+i}{2}, \frac{1+i}{2}\right)$. (See \cite[pp.117-120]{CS}, for instance.) Set
\begin{equation*}
\begin{split}
&w_{1} := (1,0), \quad  w_{2}:= (0,1), \quad w_{3} := (i, 0), \quad w_{4} := (0, i), \\
&w_{5} :=\left(\frac{1+i}{2}, \frac{1+i}{2} \right), \quad w_{6}:= \left( \frac{1-i}{2}, \frac{1-i}{2} \right), \\
&w_{7}:= \left( \frac{1+i}{2}, -\frac{1+i}{2} \right), \quad w_{8} := \left(\frac{1-i}{2} , -\frac{1-i}{2} \right), \\
&w_{9} := \left( \frac{1+i}{2},  \frac{1-i}{2} \right), \quad  w_{10} := \left( \frac{1+i}{2}, -  \frac{1-i}{2} \right), \\
& w_{11} := \left(\frac{1-i}{2},  \frac{1+i}{2}\right), \quad w_{12} := \left(\frac{1-i}{2},  -\frac{1+i}{2}\right).\\
\end{split}
\end{equation*}
If we set $R_{1} = \cdots =R_{4} = 1/4$, $R_{5} = \cdots =R_{12} = 1/8$, then it is elementary to check that the equations $(\ref{R})$ hold for $k=1$. Hence by Theorem \ref{characterization}, the flat metric $g$ on the $2$-dimensional complex torus $\mathbf{C}^{2}/D_{4}$ is $\lambda_{1}$-extremal for all the volume-preserving deformations of the K\"{a}hler metric. This fact is not new since the metric is $\lambda_{1}$-extremal for all the volume-preserving metric deformations. This follows from the fact that L\"{u}--Wang--Xie \cite{LWX} recently found a $2$-parameter family of isometric minimal immersion by the first eigenfunctions from $\mathbf{C}^{2}/D_{4}$ into the unit sphere $S^{23} \subset \mathbf{R}^{24}$. (See Example 1.1 in \cite{LWX}.)

In fact, for any $m \geq 3$, the checkerboard lattice $D_{m}$ is defined as a lattice in $\mathbf{R}^{m}$ by
\begin{equation*}
D_{m} := \{ (x^{1}, \ldots , x^{m}) \in \mathbf{Z}^{m} \mid x^{1} + \cdots +x^{m} \in 2\mathbf{Z} \}.
\end{equation*} 
We show the following: 
\begin{prop}
\label{check}
For any $m \geq 3$, the flat torus $\mathbf{R}^{m}/D_{m}$ admits an isometric minimal immersion into a Euclidean sphere. 
\end{prop}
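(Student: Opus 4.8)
The plan is to use the classical criterion, going back to Takahashi \cite{Takahashi}, that detects isometric minimal immersions into round spheres in terms of eigenfunctions: a compact Riemannian manifold $(M,g)$ admits an isometric minimal immersion into a Euclidean sphere by $\lambda_{k}(g)$-eigenfunctions if and only if there is a finite collection $\{f_{1},\dots,f_{N}\}\subset E_{k}(g)$ with $\sum_{j}f_{j}^{2}$ constant and $\sum_{j}df_{j}\otimes df_{j}=g$; the image then lies in $S^{N-1}(r)$ with $r^{2}=\sum_{j}f_{j}^{2}$, and minimality is automatic since $\Delta_{g}F=\lambda_{k}(g)F$ for $F=(f_{1},\dots,f_{N})$. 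So the whole task reduces to producing such a collection on $\mathbf{R}^{m}/D_{m}$.

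On a flat torus $\mathbf{R}^{m}/L$ the $\lambda_{1}$-eigenfunctions are $\cos(2\pi\langle w,x\rangle)$ and $\sin(2\pi\langle w,x\rangle)$ with $w$ ranging over the nonzero shortest vectors of the dual lattice $L^{*}$, and for a representative set $W$ of such minimal vectors (one out of each pair $\pm w$) and positive weights $c_{w}>0$ the map with components $\sqrt{c_{w}}\cos(2\pi\langle w,x\rangle),\ \sqrt{c_{w}}\sin(2\pi\langle w,x\rangle)$, $w\in W$, satisfies $\sum_{j}f_{j}^{2}=\sum_{w\in W}c_{w}$ (automatically constant) and $\sum_{j}df_{j}\otimes df_{j}=4\pi^{2}\sum_{w\in W}c_{w}\,(w\otimes w)$, where $w$ is viewed as a covector via the flat metric. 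After rescaling the $c_{w}$ this reduces Proposition \ref{check} to the purely lattice-theoretic assertion: \emph{the minimal vectors of $D_{m}^{*}$ carry positive weights $c_{w}$ with $\sum_{w}c_{w}\,w\otimes w$ a scalar multiple of $I_{m}$} (a eutaxy, or spherical $2$-design, condition). First I would record that $D_{m}^{*}=\mathbf{Z}^{m}\cup(\mathbf{Z}^{m}+\tfrac12\mathbf{1})$, so its nonzero vectors of smallest norm are among: the $2m$ vectors $\pm e_{i}$ of norm $1$ coming from $\mathbf{Z}^{m}$, and the $2^{m}$ vectors $\tfrac12\varepsilon$, $\varepsilon\in\{\pm1\}^{m}$, of norm $m/4$ coming from the nontrivial coset. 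Comparing $1$ with $m/4$ splits the argument into three regimes.

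For $m\ge 5$ one has $m/4>1$, so the minimal vectors are exactly $\pm e_{i}$ and $\sum_{i}e_{i}\otimes e_{i}=I_{m}$ with all weights $1$; equivalently, the resulting $F$ is the standard isometric minimal immersion $\mathbf{R}^{m}/\mathbf{Z}^{m}\to S^{2m-1}(\sqrt{m}/2\pi)$ precomposed with the local-isometric covering $\mathbf{R}^{m}/D_{m}\to\mathbf{R}^{m}/\mathbf{Z}^{m}$. For $m=3$ one has $m/4<1$, so the minimal vectors are the eight $\tfrac12\varepsilon$; taking equal weights and using $\sum_{\varepsilon\in\{\pm1\}^{m}}\varepsilon_{i}\varepsilon_{j}=2^{m}\delta_{ij}$ gives $\sum_{\varepsilon}(\tfrac12\varepsilon)\otimes(\tfrac12\varepsilon)$ proportional to $I_{3}$. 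For $m=4$ one has $m/4=1$, so the minimal vectors are the twenty-four vectors $\pm e_{i}$ together with the sixteen $\tfrac12\varepsilon$ (the $D_{4}$ root system); weighting the two families by constants $a,b>0$ yields a sum of the form $(a+\mathrm{const}\cdot b)I_{4}$, so any choice, e.g. $a=b$, works and recovers the $24$-component example of L\"u--Wang--Xie \cite{LWX}. In every case the vectors in $W$ span $\mathbf{R}^{m}$ (forced by positivity of the $c_{w}$, since $\sum c_{w}w\otimes w$ is nondegenerate), and at each point at least one of $\cos,\sin$ is nonzero for every $w\in W$, so $dF$ is injective and $F$ is an immersion; together with $\sum f_{j}^{2}$ constant and $\sum df_{j}\otimes df_{j}=g$ after rescaling, $F$ is the desired isometric minimal immersion into a Euclidean sphere (by first eigenfunctions, since all the chosen $w$ have the same, minimal, norm).

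I expect the only genuinely delicate point to be the correct identification of the minimal-vector set of $D_{m}^{*}$ — in particular the transitional dimension $m=4$, where the two families $\pm e_{i}$ and $\tfrac12\varepsilon$ have equal norm and must be used together — together with the observation that the prescribed weights can be taken strictly positive, so that $F$ is actually an immersion (equivalently, that the minimal vectors span $\mathbf{R}^{m}$). The verification of the eutaxy identity itself is a one-line symmetry computation based on $\sum_{\varepsilon}\varepsilon_{i}\varepsilon_{j}=0$ for $i\neq j$, and presents no difficulty.
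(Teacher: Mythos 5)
Your argument is correct, and it is genuinely more self-contained than the paper's. The paper disposes of $m=3$ and $m=4$ by citing the explicit immersions of L\"u--Wang--Xie and only argues directly for $m\geq 5$, where it simply exhibits the standard map $x\mapsto \frac{1}{2\pi}(\cos(2\pi x^{j}),\sin(2\pi x^{j}))_{j}$ after identifying the shortest vectors of $D_{m}^{*}$ as $\{\pm e_{j}\}$. You instead reduce the whole problem, uniformly in $m$, to the eutaxy condition $\sum_{w}c_{w}\,w\otimes w\propto I_{m}$ on the minimal vectors of $D_{m}^{*}=\mathbf{Z}^{m}\cup(\mathbf{Z}^{m}+\tfrac12\mathbf{1})$, and then verify it by the symmetry identity $\sum_{\varepsilon\in\{\pm1\}^{m}}\varepsilon_{i}\varepsilon_{j}=2^{m}\delta_{ij}$ in each of the three regimes $m/4<1$, $=1$, $>1$. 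This buys a proof that does not outsource the low-dimensional cases, makes transparent why the case split occurs (comparison of the norms $1$ and $m/4$), and recovers the $m\geq 5$ case as the same computation; the paper's route is shorter but leans on an external classification for $m=3,4$. One small overstatement on your side: for $m=4$ the two families need not be used together --- the vectors $\pm e_{i}$ alone already satisfy the eutaxy condition (the map then factors through the covering $\mathbf{R}^{4}/D_{4}\to\mathbf{R}^{4}/\mathbf{Z}^{4}$), so the weight $b$ may be taken to be $0$ as well as positive; this does not affect the validity of your choice $a=b$. Your identification of $D_{m}^{*}$ and of its minimal vectors agrees with Conway--Sloane, and the Takahashi-type criterion you invoke (isometric eigenmap with $\Delta F=\lambda F$ is minimal into the sphere of radius $\sqrt{2n/\lambda}$) is exactly the right tool, so the proof is complete.
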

\begin{proof}
The property of $D_{m}$ should be considered separately for the case $m=3$, $m=4$ and $m \geq 5$. For $m=3$ and $m=4$, the assertion has been proved by L\"{u}--Wang--Xie \cite{LWX}. (See Example $4.3$ in \cite{LWX} for $m=3$ and Example 1.1 in \cite{LWX} for $m=4$.) Hence it suffices to consider the case where $m\geq 5$. For $m \geq 5$, $D_{m}^{*}$ is a lattice with the basis $\{e_{j} \}_{j=1}^{m-1} \cup \{ \frac{1}{2}\sum_{k=1}^{m} e_{k} \}$, where $\{ e_{j} \}_{j=1}^{m}$ is the standard basis in $\mathbf{R}^{m}$. (See \cite[p.120]{CS}, for instance.) The shortest vectors are exactly the $2m$ vectors $\{ \pm e_{j} \}_{j=1}^{m}$. Thus we have $\lambda_{1}(g) = 4\pi^{2}$ and $E_{1}(g)$ is spanned by $\{ \cos(2\pi x^{j}), \sin(2\pi x^{j}) \}_{j=1}^{m}$, where $\{x^{j} \}_{j=1}^{m}$ is the standard coordinate in $\mathbf{R}^{m}$. It is obvious that the map
\begin{equation*}
\begin{split}
&\mathbf{R}^{m}/D_{m}  \rightarrow S^{2m-1}\left(\frac{ \sqrt{m} }{2\pi}\right), \\
& (x^{1}, \ldots, x^{m}) \mapsto \left(\frac{1}{2\pi}\cos(2\pi x^{1}), \frac{1}{2\pi}\sin(2\pi x^{1}), \ldots, \frac{1}{2\pi}\cos(2\pi x^{m}), \frac{1}{2\pi}\sin(2\pi x^{m}) \right) \\
\end{split}
\end{equation*}
is an isometric minimal immersion.
\end{proof}
For a long time, only the standard torus (Example \ref{standard}) had been an example of higher dimensional tori that admit an isometric minimal immersion into a Euclidean sphere by the first eigenfunctions. Very recently, L\"{u}--Wang--Xie \cite{LWX} constructed new examples of higher dimensional flat tori that admit an isometric minimal immersion into a Euclidean sphere by the first eigenfunctions. The flat torus $\mathbf{R}^{m}/D_{m}$ $(m\geq 3)$ is a new example.
\end{exam}

\begin{exam}
\label{1a1b}
For $a, b \in [1, \infty)$, consider the lattice $\Gamma_{a,b}$ in $\mathbf{C}^{2}$ with the lattice basis $(1,0)$, $(a^{-1}\sqrt{-1}, 0)$, $(0,1)$, $(0, b^{-1}\sqrt{-1})$. Let $T^{2}_{a,b}$ be the $2$-dimensional complex torus determined by $\Gamma_{a,b}$ with the flat metric $g_{a,b}$. Let $\Gamma_{a} \subset \mathbf{C}$ be the lattice with the lattice basis $(1,0)$, $(a^{-1}\sqrt{-1}, 0)$ and $(T^{1}_{a}, h_{a})$ the flat $1$-dimensional complex torus determined by $\Gamma_{a}$. Then $(T^{2}_{a,b}, g_{a,b})$ is the product of $(T^{1}_{a}, h_{a})$ and $(T^{1}_{b}, h_{b})$. We have $\lambda_{1}(T^{1}_{a}, h_{a}) = 1 = \lambda_{1}(T^{1}_{b}, h_{b})$. Hence Proposition \ref{AJK-torus} and Corollary \ref{product} imply that the metric $g_{a,b}$ on $T^{2}_{a,b}$ is $\lambda_{1}$-extremal for all the volume-preserving deformations of the K\"{a}hler metric. However, since we have $E_{1}(g_{a,b}) = \mbox{span}\{ \cos(2\pi x^{1}), \sin(2\pi x^{1}), \cos(2\pi x^{3}), \sin(2\pi x^{3}) \}$, the flat torus $(T^{2}_{a,b}, g_{a,b})$ does not admit an isometric minimal immersion into a Euclidean sphere by first eigenfunctions. Thus $g_{a,b}$ is not $\lambda_{1}$-extremal for all the volume-preserving metric deformations.
\end{exam}

\begin{exam}
\label{11ab}
For $a, b \in [1, \infty)$, consider the lattice $\widetilde{\Gamma}_{a,b}$ in $\mathbf{C}^{2}$ with the lattice basis $(1,0)$, $(\sqrt{-1}, 0)$, $(0,a^{-1})$, $(0, b^{-1}\sqrt{-1})$. Let $\widetilde{T}^{2}_{a,b}$ be the $2$-dimensional complex torus determined by $\widetilde{\Gamma}_{a,b}$ with the flat metric $\widetilde{g}_{a,b}$. Let $(T^{1}_{std}, h_{std})$ be the flat 1-dimensional complex torus determined by the lattice with the lattice basis $(1,0)$, $(\sqrt{-1}, 0)$. Let $(T^{1}_{a,b}, h_{a,b})$ be the flat 1-dimensional complex torus determined by the lattice with the lattice basis $(0,a^{-1})$, $(0, b^{-1}\sqrt{-1})$. Then $(\widetilde{T}^{2}_{a,b}, \widetilde{g}_{a,b})$ is the product of $(T^{1}_{std}, h_{std})$ and $(T^{1}_{a,b}, h_{a,b})$. We have $\lambda_{1}(T^{1}_{std}, h_{std}) = 1$ and $\lambda_{1}(T^{1}_{a,b}, h_{a,b}) = \mbox{min}\{a,b\}$. Hence if we have $a=1$ or $b=1$, then Proposition \ref{AJK-torus} and Corollary \ref{product} imply that the metric $\widetilde{g}_{a,b}$ on $\widetilde{T}^{2}_{a,b}$ is $\lambda_{1}$-extremal for all the volume-preserving deformations of the K\"{a}hler metric. On the other hand, if we have $a>1$ and $b>1$, then by Corollary \ref{NOT}, $\widetilde{g}_{a,b}$ is not $\lambda_{1}$-extremal for all the volume-preserving deformations of the K\"{a}hler metric. 
\end{exam}
In Example \ref{1a1b} and Example \ref{11ab}, if we ignore the complex structure on $\mathbf{C}^{2}$ and regard $\mathbf{C}^{2}$ as $\mathbf{R}^{4}$, then we have $\Gamma_{a,b} \cong \widetilde{\Gamma}_{a,b}$. However, whether the flat metric is $\lambda_{1}$-extremal is different in Example \ref{1a1b} and Example \ref{11ab}. Hence Example \ref{1a1b} and Example \ref{11ab} show that the notion of $\lambda_{1}$-extremality actually depends on the complex structure.

Finally we give a 1-parameter family of 2-dimensional complex tori whose flat metrics are not $\lambda_{1}$-extremal for all the volume-preserving deformations of the K\"{a}hler metric.
\begin{exam}
\label{last}
For $\pi/3 < \theta <\pi /2$, we consider the lattice $\Gamma_{\theta} \subset \mathbf{C}^{2}$ with the lattice basis $(1, 0)$, $(\cos \theta, \sin \theta)$, $(\sqrt{-1}, 0)$, $(\sqrt{-1}\cos \theta, \sqrt{-1}\sin \theta)$. Let $g_{\theta}$ be the flat metric on $\mathbf{C}^{2}/\Gamma_{\theta}$. It is straightforward to check that the dual lattice $\Gamma_{\theta}^{*}$ is the lattice with the basis $w_{1} := (1, -\cos \theta /\sin \theta)$, $w_{2}:= (0, 1/\sin \theta)$, $w_{3}:=(\sqrt{-1}, -\cos \theta /\sin \theta)$, $w_{4}:=(0, \sqrt{-1}/\sin \theta)$. Then we have $S(\lambda_{1}(g_{\theta})) = \{ \pm w_{\nu} \}_{\nu=1}^{4}$ and so the multiplicity of $\lambda_{1}(g_{\theta})$ is $8$. If $g$ is $\lambda_{1}$-extremal, then Theorem \ref{characterization} implies that there exists $\{ R_{\nu} \}_{\nu=1}^{4}$ such that
\begin{equation*}
-\frac{\cos \theta}{\sin \theta}(R_{1}+R_{3}) = 0, \quad R_{1}+R_{3}= 1, \quad \frac{1}{\sin^{2}\theta}(R_{2}+R_{4}) =1.
\end{equation*}
Since we have $\pi/3 < \theta <\pi /2$, this is a contradiction. Hence $g_{\theta}$ is not $\lambda_{1}$-extremal. $\mathbf{C}^{2}/\Gamma_{\theta}$ is not a product of $1$-dimensional flat complex tori. In fact, assume that $\mathbf{C}^{2}/\Gamma_{\theta}$ is a product of $(T_{1},h_{1})$ and $(T_{2}, h_{2})$, where each is a $1$-dimensional flat complex torus. If we had $\lambda_{1}(h_{1}) = \lambda_{1}(h_{2})$, then by Proposition \ref{AJK-torus} and Corollary \ref{product}, $g_{\theta}$ would be $\lambda_{1}$-extremal. Hence we have $\lambda_{1}(h_{1}) \neq \lambda_{1}(h_{2})$. We may assume $\lambda_{1}(h_{1}) < \lambda_{1}(h_{2})$. Then the multiplicity of of $\lambda_{1}(h_{1})$ is equal to that of $\lambda_{1}(g_{\theta})$, that is, $8$. This is a contradiction since the multiplicity of the first eigenvalue of a $1$-dimensional flat complex torus is at most $6$ (see \cite{EI2}, for example). Thus $\mathbf{C}^{2}/\Gamma_{\theta}$ is not a product of $1$-dimensional complex tori. Let $\widetilde{\Gamma}_{\theta} \subset \mathbf{R}^{2}$ be the lattice with the lattice basis $(1,0)$, $(\cos \theta, \sin \theta)$. Let $(\mathbf{R}^{2}/\widetilde{\Gamma}_{\theta}, h_{ \theta})$ be the flat real 2-dimensional torus. If we ignore the complex structure on $\mathbf{C}^{2}$ and regard it as $\mathbf{R}^{4}$, then $(\mathbf{R}^{4}/\Gamma_{\theta}, g_{\theta})$ is a Riemannian product of $(\mathbf{R}^{2}/\widetilde{\Gamma}_{\theta}, h_{ \theta})$ and $(\mathbf{R}^{2}/\widetilde{\Gamma}_{\theta}, h_{ \theta})$.
\end{exam} 

\textbf{Acknowledgements} I would like to thank my advisor, Professor Shin Nayatani, for his constant encouragement and valuable suggestions. I am also grateful to Professor Fabio Podest\`{a} for his interest in this research.

\section*{Declarations}

\begin{itemize}
\item Funding This work is partially supported by the Grant-in-Aid for JSPS Fellows Grant Number JP23KJ1074.
\item Competing interests The author declares no competing interests.
\item Ethics approval and consent to participate `Not applicable'
\item Consent for publication `Not applicable'
\item Data availability `Not applicable'
\item Materials availability `Not applicable'
\item Code availability `Not applicable'
\item Author contribution The author declares sole responsibility for this study.
\end{itemize}


\begin{thebibliography}{99}
\bibitem{AJK} V. Apostolov, D. Jakobson and G. Kokarev, An extremal eigenvalue problem in K\"{a}hler geometry, J. Geom. Physics. \textbf{91}(2015), 108-116.
\bibitem{AGL} C. Arezzo, A. Ghigi and A. Loi, Stable bundles and the first eigenvalue of the Laplacian, J. Geom. Anal. \textbf{17}(2007), 375-386.
\bibitem{Berger} M. Berger, Sur les premi\'{e}res valeurs propres des vari\'{e}t\'{e}s Riemanniennes, Compositio. Math. \textbf{26}(1973), 129-149.
\bibitem{BL} C. Birkenhake and H. Lange, Complex Abelian Varieties, 2nd ed., Grundlehren der mathematischen Wissenschaften, vol. 302, Springer Verlag, 2004.
\bibitem{BG} L. Biliotti and A. Ghigi, Satake-Furstenberg compactifications, the moment map and $\lambda_{1}$, Amer. J. Math. \textbf{135}(2013), no. 1, 237-274.
\bibitem{Bleecker} D. Bleecker, The spectrum of a Riemannian manifold with a unit Killing vector field, Trans. Amer. Math. Soc. \textbf{275}(1983), 409-416. 
\bibitem{BLY} J. -P. Bourguignon, P. Li and S. T. Yau, Upper bound for the first eigenvalue of algebraic submanifolds, Comment. Math. Helv. \textbf{69}(1994), 199-207.
\bibitem{CD} B. Colbois and J. Dodziuk, Riemannian metrics with large $\lambda_{1}$, Proc. Amer. Math. Soc. \textbf{122}(1994), 905-906. 
\bibitem{CS} J. H. Conway and N. J. A. Sloane, Sphere packings, lattices and groups, Grundlehren der mathematischen Wissenschaften, vol. 290, Springer Verlag, 1999.
\bibitem{EIconf} A. El Soufi and S. Ilias, Immersions minimales, premi\`{e}re valeur propre du laplacien et volume conforme, Math. Ann. \textbf{275}(1986), 257-267.
\bibitem{EI2} A. El Soufi and S. Ilias, Riemannian manifolds admitting isometric immersions by their first eigenfunctions, Pacific J. Math. \textbf{195}(2000), 91-99.
\bibitem{EI1}A. El Soufi and S. Ilias, Extremal metrics for the first eigenvalue of the Laplacian in a conformal class, Proc. Amer. Math. Soc. \textbf{131}(2003), 1611-1618.
\bibitem{EI} A. El Soufi and S. Ilias, Laplacian eigenvalue functionals and metric deformations on compact manifolds, J. Geom. Physics. \textbf{58}(2008), 89-104.
\bibitem{Hersch} J. Hersch, Quatre propri\'{e}t\'{e}s isop\'{e}rim\'{e}triques de membranes sph\'{e}riques homog\`{e}nes, C. R. Acad. Sci. Paris S\'{e}r. A-B \textbf{270}(1970), A1645-A1648.
\bibitem{Karpukhin} M. Karpukhin, Upper bounds for the first eigenvalue of the Laplacian on non-orientable surfaces, Int. Math. Res. Notes \textbf{20}(2016), 6200-6209.
\bibitem{Karpukhin2} M. Karpukhin, On the Yang-Yau inequality for the first Laplace eigenvalue, Geom. Funct. Anal. \textbf{29}(2019), 1864-1885.
\bibitem{KV} M. Karpukhin and D. Vinokurov, The first eigenvalue of the Laplacian on orientable surfaces, Math. Z. \textbf{301}(2022), 2733-2746.
\bibitem{Lichnerowicz} A. Lichnerowicz, Vari\'{e}t\'{e}s pseudok\"{a}hleri\'{e}nnes \`{a} courbure de Ricci non nulle; application aux domaines born\'{e}s homog\`{e}nes de $C^{r}$, C. R. Acad. Sci. Paris \textbf{235}(1952), 12-14.
\bibitem{LY} P. Li and S. T. Yau, A new conformal invariant and its applications to the Willmore conjecture and the first eigenvalue of compact surfaces, Invent. Math. \textbf{69}(1982), 269-291.
\bibitem{LWX} Y. L\"{u}, P. Wang and Z. Xie, \textit{Classification of minimal immersions of conformally flat $3$-tori and $4$-tori into spheres by the first eigenfunctions}, Math. Ann. (2024).
\bibitem{MR} S. Montiel and A. Ros, Minimal immersions of surfaces by the first eigenfunctions and conformal area, Invent. Math. \textbf{83}(1986), 153-166.
\bibitem{Muto} H. Muto, The first eigenvalue of the Laplacian on even dimensional spheres, T\^{o}hoku Math. J. \textbf{32}(1980), 427-432.
\bibitem{MU} H. Muto and H. Urakawa, On the least positive eigenvalue of the Laplacian for compact homogeneous spaces, Osaka J. Math. \textbf{17}(1980), 471-484.
\bibitem{Nadirashvili} N. Nadirashvili, Berger's isoperimetric problem and minimal immersions of surfaces, Geom. Funct. Anal. \textbf{6}(1996), 877-897. 
\bibitem{Nagano} T. Nagano, On the Minimum Eigenvalues of the Laplacians in Riemannian Manifolds, Sci. Papers College Gen. Ed. Univ. Tokyo \textbf{11}(1961), 177-182.
\bibitem{NS} S. Nayatani and T. Shoda, Metrics on a closed surface of genus two which maximize the first eigenvalue of the Laplacian, C. R. Acad. Sci. Paris \textbf{357}(2019), 84-98.
\bibitem{Petrides} R. Petrides, Existence and regularity of maximal metrics for the first Laplace eigenvalue on surfaces, Geom. Funct. Anal. \textbf{24}(2014), no.4, 1336-1376. 
\bibitem{Ros} A. Ros, On the first eigenvalue of the Laplacian on compact surfaces of genus three, J. Math. Soc. Japan \textbf{74}(2022), no.3, 813-828.
\bibitem{Sakai} T. Sakai, Riemannian Geometry, Translations of Mathematical Monographs \textbf{149}, American Mathematical Society, Providence, RI, 1996.  
\bibitem{Takahashi}, T. Takahashi, Minimal immersions of Riemannian manifolds, J. Math. Soc. Japan, \textbf{18}(1966), 380-385.
\bibitem{TK} M. Takeuchi and S. Kobayashi, Minimal imbeddings of R-spaces, J. Differential Geometry \textbf{2}(1968), 203-215.
\bibitem{Tanno} S. Tanno, The first eigenvalue of the Laplacian on spheres, T\^{o}hoku Math J. \textbf{31}(1979), 179-185.
\bibitem{Urakawa} H. Urakawa, On the least positive eigenvalue of the Laplacian for compact group manifolds, J. Math. Soc. Japan \textbf{31}(1979), 209-226.
\bibitem{WZ} M. Wang and W. Ziller, Einstein metrics with positive scalar curvature, in: K. Shiohama, T. Sakai and T. Sunada (eds), Curvature and Topology of Riemannian Manifolds, Lecture Notes in Math., \textbf{1201}, 1986, 319-336.
\bibitem{Wolf} J. A. Wolf, The geometry and structure of isotropy irreducible homogeneous spaces, Acta Math. \textbf{120}(1968), 59-148.
\bibitem{YY} P. Yang and S. T. Yau, Eigenvalues of the Laplacian of compact Riemann surfaces and minimal submanifolds, Ann. Scoula Norm. Sup. Pisa Cl. Sci. \textbf{7}(1980), no.4, 55-63.
\end{thebibliography}
\end{document}